\newcommand{\kk}{\Bbbk}
\newcommand{\OO}{\mathcal{O}}
\newcommand{\PP}{\mathbb{P}}
\newcommand{\GG}{\mathbb{G}}
\newcommand{\ZZ}{\mathbb{Z}}
\newcommand{\Ext}{\mathrm{Ext}}
\newcommand{\Hom}{\mathrm{Hom}}
\newcommand{\RHom}{\mathrm{RHom}}
\newcommand{\intRHom}{\mathrm{R}\mathcal{H}\mathrm{om}}
\newcommand{\Pic}{\mathrm{Pic}}
\newcommand{\Coh}{\mathrm{Coh}}
\newcommand{\Dbcoh}{D^b_{\!\mathrm{coh}}}
\newcommand{\Perf}{\mathrm{Perf}}
\newcommand{\cA}{\mathcal{A}}
\newcommand{\cB}{\mathcal{B}}
\newcommand{\emptyperp}{{}^\perp}
\newcommand{\infinity}{\infty}
\newcommand{\qis}{\operatorname{qis}}
\newcommand{\dual}{{\scriptstyle\vee}}
\newcommand{\iso}{\simeq}
\newcommand{\caniso}{\cong}
\newcommand{\isoarrow}{\xrightarrow{\sim}}
\newcommand{\monoarrow}{\hookrightarrow}
\newcommand{\epiarrow}{\twoheadrightarrow}
\newcommand{\rk}{\operatorname{rk}}
\newcommand{\supp}{\operatorname{supp}}
\declaretheoremstyle[
headformat=\NUMBER.\,\NAME\NOTE,
postheadspace=.5em,
spaceabove=6pt,
headfont=\normalfont\small\scshape,
notefont=\normalfont\small\mdseries, notebraces={(}{)},
bodyfont=\normalfont\itshape
]{plainswap}
\declaretheoremstyle[
headformat=\NAME\NOTE,
postheadspace=.5em,
spaceabove=6pt,
headfont=\normalfont\small\scshape,
notefont=\normalfont\small\mdseries, notebraces={(}{)},
bodyfont=\normalfont\itshape
]{nonumplainswap}
\declaretheoremstyle[
headformat=\NUMBER.\,\NAME\NOTE,
postheadspace=.5em,
spaceabove=6pt,
headfont=\normalfont\small\scshape,
notefont=\normalfont\mdseries, notebraces={(}{)},
bodyfont=\normalfont
]{definitionswap}
\declaretheoremstyle[
headformat=\NAME\NOTE,
postheadspace=.5em,
spaceabove=6pt,
headfont=\normalfont\itshape,
notefont=\mdseries, notebraces={(}{)},
bodyfont=\normalfont
]{myremark}
\declaretheorem[style=plainswap, name=Theorem, sharenumber=subsection]{theorem}
\declaretheorem[style=plainswap, numberlike=theorem, name=Proposition]{proposition}
\declaretheorem[style=plainswap, numberlike=theorem, name=Lemma]{lemma}
\declaretheorem[style=plainswap, numberlike=theorem, name=Corollary]{corollary}
\declaretheorem[style=plainswap, numberlike=theorem, name=Conjecture]{conjecture}
\theoremstyle{definition}
\declaretheorem[style=definitionswap, numberlike=theorem, name=Definition]{definition}
\declaretheorem[style=definitionswap, numberlike=theorem, name=Example]{example}
\declaretheorem[style=definitionswap, numberlike=theorem, name=Remark]{remark}
\theoremstyle{myremark}
\numberwithin{equation}{theorem}
\title{Admissible subcategories supported on curves}
\author{Dmitrii Pirozhkov$^1$}
\address{$^1$Steklov Mathematical Institute of Russian Academy of Sciences, Moscow, Russia}
\email{dpirozhkov@mi-ras.ru}
\begin{document}

\begin{abstract}
  Let $X$ be a smooth projective variety. We study admissible subcategories of the bounded derived category of coherent sheaves on $X$ whose support is a proper subvariety $Z \subset X$. We show that any one-dimensional irreducible component of $Z$ is a rational curve. When $\operatorname{dim} Z = 1$, we prove that at least one irreducible component in $Z$ intersects the canonical class $K_X$ negatively. In particular, this implies that a surface with a nef and effective canonical bundle has indecomposable derived category, confirming the conjecture by Okawa. We also prove that a configuration of curves with non-negative self-intersections on a surface cannot support an admissible subcategory.
\end{abstract}
\maketitle

\section{Introduction}

The study of semiorthogonal decompositions for derived categories of coherent sheaves on algebraic varieties is an important part of modern algebraic geometry~\cite{kuznetsov-survey}. One important problem in this area is to understand which varieties have indecomposable derived categories. Another core problem, which unfortunately is out of reach for currently existing methods, is to classify semiorthogonal decompositions (equivalently: admissible subcategories) for a given variety when non-trivial decompositions exist. Both these problems are easy to handle for smooth proper curves~\cite{okawa-curve}, but very difficult already in the next natural class of varieties, smooth projective surfaces. On surfaces even the simplest and most studied type of a semiorthogonal decomposition, namely exceptional collections, is complicated in full generality. Despite many strong results~\cite{gorodentsev-rudakov, kuleshov-orlov, okawa-uehara}, exceptional collections on surfaces still hide many secrets~\cite{krah-phantom}.

In this paper we study admissible subcategories in the ``intermediate'' dimension case: we consider semiorthogonal decompositions for higher dimensional varieties, but not arbitrary decompositions, only those where one of the components is a subcategory whose support is one-dimensional. The support condition can often be deduced from the properties of the canonical line bundle: Kawatani and Okawa proved \cite[Thm.~1.1]{kawatani-okawa} that for a smooth projective variety $X$ in any semiorthogonal decomposition at least one of the components is supported on the base locus of the canonical linear system. Thus whenever the base locus of~$K_X$ is one-dimensional, our results give further constraints on the possible semiorthogonal decompositions.

The main results we prove in this paper are collected in the following theorem. For the notion of support of an admissible subcategory see Definition~\ref{def:support_subcategory}.

\begin{theorem}
  \label{thm:collected_theorem}
  Let $X$ be a smooth projective variety over an algebraically closed field $\kk$ of characteristic zero, and let $\cA \subset \Dbcoh(X)$ be an admissible subcategory. Assume that the set-theoretical support $Z := \supp(\cA)$ of the subcategory $\cA$ is not equal to $X$. Then:
  \begin{itemize}
  \item if $C \subset Z$ is an irreducible component with $\dim C = 1$, then $C$ is a rational curve, i.e., the geometric genus of $C$ is zero;
  \item if $\dim Z = 1$, then at least one irreducible component $C \subset Z$ satisfies~$K_X \cdot C < 0$;
  \item if $\dim Z = 1$ and $\dim X = 2$, then at least one of the irreducible components $C \subset Z$ satisfies $C \cdot C < 0$.
  \end{itemize}
\end{theorem}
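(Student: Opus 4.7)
The plan is to study the projection functor $\pi_\cA : \Dbcoh(X) \to \cA$ acting on skyscraper sheaves supported on $Z$. Since $\cA$ is admissible, $\pi_\cA$ is represented by a Fourier--Mukai kernel on $X \times X$. The support hypothesis translates into the vanishing $\pi_\cA(\OO_x) = 0$ for $x \notin Z$; conversely, for a general smooth point $c$ of a one-dimensional component $C \subset Z$ (chosen away from the other components), the object $E_c := \pi_\cA(\OO_c)$ is a nonzero element of $\cA$. As $c$ varies in the smooth locus of $C$, one obtains an integral transform $\Phi : \Dbcoh(C) \to \cA$, and all three bullets will follow from geometric and numerical constraints on $\Phi$.

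For the first bullet, I would exploit that the $E_c$ are numerically equivalent in the finite-rank numerical K-group of $\cA$ (as the $\OO_c$ are numerically equivalent in $K_0(X)$). If $g(C) > 0$, twisting $\Phi$ by line bundles in $\Pic^0(C)$ and using the discreteness of this K-group should force the twisted transforms to vanish for generic $L \in \Pic^0(C)$, collapsing the family in a way contradicting $E_c \neq 0$. This is the derived analog of saying that an integral kernel on $C \times X$ which is numerically constant along $C$ cannot see the continuous part of $\Pic(C)$ unless $\Pic^0(C)$ is trivial.

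For the second bullet, the key tool is the induced Serre functor $S_\cA = \pi_\cA((-)\otimes\omega_X)[\dim X]$ on $\cA$. Pointwise $S_\cA(E_c) \iso E_c[\dim X]$, but the relative version over $C$ twists by $\omega_X|_C$, a line bundle of degree $K_X \cdot C$. If $K_X \cdot C_i \geq 0$ for every component $C_i \subset Z$, then iterating $S_\cA$ on the family $\Phi$ produces an infinite sequence of numerically independent objects in $\cA$, contradicting the finite rank of the numerical K-group of $\cA$ forced by admissibility. For the third bullet I would refine this on surfaces using part 1 and deformation theory: each rational component $C_i$ with $C_i^2 \geq 0$ moves in a positive-dimensional family by Mori-type estimates on the normal bundle, so propagating the non-vanishing of $E_c$ along these deformations eventually produces points $c'$ outside $Z$ with $\pi_\cA(\OO_{c'}) \neq 0$, contradicting $\supp(\cA) = Z$.

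I expect the hardest step to be the rigidity argument in part 1, namely the passage from numerical constancy of $[E_c]$ to vanishing of $\Pic^0(C)$-twisted transforms; this likely requires a careful relative Fourier--Mukai analysis on $\Pic^0(C) \times C \times X$ together with a projection formula. The deformation step in part 3 is also subtle, since the support of $\cA$ is associated to a fixed kernel and does not automatically move with $C_i$; one must instead control how the kernel of $\pi_\cA$ varies in a neighborhood of $C_i$ to transport the non-vanishing of $E_c$ to nearby deformed curves.
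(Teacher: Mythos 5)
The proposal diverges from the paper's argument at every step and contains at least one outright error, so let me address the three bullets separately.

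For the first bullet your mechanism is numerical: you hope that numerical constancy of $[E_c]$ in the K-group of $\cA$ plus twisting by $\Pic^0(C)$ forces vanishing. The paper proves something far stronger and that strength is what makes the argument close: the projection $A_p = \pi_\cA(\OO_p)$ is literally \emph{isomorphism-invariant} under twisting by $\Pic^0(\widetilde{Z})$ for an infinitesimal thickening $\widetilde{Z} \supset Z$ (Proposition~\ref{prop:skyscraper_projections_invariance}). The key tool is the rigidity of projection triangles (Proposition~\ref{prop:isotriviality_projection_triangles}, built on Kawatani--Okawa's openness result Proposition~\ref{prop:kawatani_okawa_rigidity}): a family of objects that all project to $\cA$ and that is locally trivial near $\supp(\OO_p)$ must be globally isotrivial. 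From this one deduces that each cohomology sheaf $\mathcal{H}^i(A_p)$, restricted to $C$, is $\Pic^0(C)$-invariant \emph{as a sheaf}, and then Lemma~\ref{lem:pic_invariant_sheaves_on_smooth_curves} (a $\Pic^0$-invariant sheaf on a positive-genus curve is torsion) finishes. Your plan to ``collapse the family by discreteness of the numerical K-group'' does not get you this: numerical constancy along $\Pic^0(C)$ is automatic for any family and carries no information. You also do not engage with a genuine technical obstacle: $A_p$ does not live on $C$ but only on a non-reduced thickening $\widetilde{Z}$, and one must know that line bundles from $\Pic^0(C)$ extend to $\widetilde{Z}$ and to all further thickenings; this is Lemmas~\ref{lem:infinitely_extendable_on_curves} and~\ref{lem:line_bundles_on_component_curves} and is specific to dimension $\leq 1$.

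For the second bullet the Serre functor claim $S_\cA(E_c) \iso E_c[\dim X]$ is false: $S_\cA$ is the Serre functor of the subcategory $\cA$, not of $\Dbcoh(X)$, and it involves a mutation; the computation $\pi_\cA(\OO_c \otimes \omega_X) \iso \pi_\cA(\OO_c)$ is about the ambient twist of the skyscraper and says nothing about $S_\cA(E_c) = \pi_\cA(E_c \otimes \omega_X)[\dim X]$, where $E_c \neq \OO_c$. So the proposed contradiction via ``infinitely many numerically independent objects'' does not get off the ground. The paper instead proves a pointwise vanishing constraint (Proposition~\ref{prop:condition_on_support}): for every $p \in Z$, every global section of $K_X|_{\widetilde{Z}} \otimes M$ for $M$ in the connected component of infinitely-extendable line bundles vanishes at $p$. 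Then, under the hypothesis $K_X \cdot C_k \geq 0$ for all $k$, one explicitly constructs an $M \in \Pic^0(\widetilde{Z})$ with a non-vanishing section of $K_X|_{\widetilde{Z}} \otimes M$, using the structure $\Pic(\widetilde{Z})/\Pic^0(\widetilde{Z}) \hookrightarrow \ZZ^m$ by multidegree. The contradiction comes from a sheaf-theoretic base-locus argument, not a numerical K-theory one.

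For the third bullet you already flag the gap yourself: the kernel of $\pi_\cA$ does not move with $C_i$. The paper does not try to deform anything. It is a purely numerical argument in the Grothendieck group of the surface: the long exact sequence of cohomology of the projection triangle shows the connecting maps $\mathcal{H}^i(A_p) \to \mathcal{H}^{i+1}(B_p)$ are isomorphisms for all but one $i$; Lemma~\ref{lem:skyscraper_projections_are_zero_in_k0} says $A_p$ has at least two nonzero cohomology sheaves; picking $i$ where $\phi_i$ is an iso, semiorthogonality of $A_p$ and $B_p$ plus the spectral sequence bound (Lemma~\ref{lem:ext1_bound_on_surface}) forces $\Ext^1(\mathcal{H}^i(A_p), \mathcal{H}^i(A_p)) = 0$; then $\chi(\mathcal{H}^i(A_p), \mathcal{H}^i(A_p)) > 0$, and Lemma~\ref{lem:euler_characteristic_via_intersections} identifies $\chi(\mathcal{F}, \mathcal{E})$ with $-I(\underline{l}(\mathcal{F}), \underline{l}(\mathcal{E}))$, yielding a negative diagonal entry of $I$. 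Nothing moves; the argument is entirely intersection-theoretic.
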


The first claim is proved in Theorem~\ref{thm:rational_curves_only}; note that it applies even when $Z$ may have other irreducible components of dimension larger than one. The second claim is the most complicated one and it is proved in Theorem~\ref{thm:no_nefness_on_support}. The third claim is proved by classical arguments in Theorem~\ref{thm:sod_implies_negativity}. Only the second claim uses the assumption that the characteristic is zero.

As we explain in Section~\ref{sec:negativity_for_canonical}, the second claim of Theorem~\ref{thm:collected_theorem}, combined with Kawatani--Okawa canonical base locus result \cite[Thm.~1.1]{kawatani-okawa}, establishes indecomposability for derived categories of certain varieties:

\begin{corollary}[{see Corollary~\ref{cor:nef_with_curve_base_locus}}]
  \label{cor:nef_with_curve_base_locus_intro}
  Let $X$ be a smooth projective variety. If the canonical bundle~$K_X$ is nef and the base locus of $|K_X|$ has dimension at most one, then $X$ admits no non-trivial semiorthogonal decompositions.
\end{corollary}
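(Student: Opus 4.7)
The plan is to combine the Kawatani--Okawa base-locus theorem with the second bullet of Theorem~\ref{thm:collected_theorem}. Suppose for contradiction that $X$ admits a non-trivial semiorthogonal decomposition $\Dbcoh(X) = \langle \cA_1, \dots, \cA_n \rangle$. The cases $\dim X \in \{0, 1\}$ are immediate (a point has no non-trivial decompositions, and a smooth proper curve with nef $K_X$ has genus at least one, hence is indecomposable by~\cite{okawa-curve}), so we may assume $\dim X \geq 2$. By~\cite[Thm.~1.1]{kawatani-okawa}, at least one non-zero component $\cA = \cA_i$ of the decomposition has set-theoretic support $Z := \supp(\cA)$ contained in the base locus $B := \mathrm{Bs}\,|K_X|$.

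Since $\dim B \leq 1 < \dim X$, the support $Z$ is a proper subvariety of $X$, so Theorem~\ref{thm:collected_theorem} applies to $\cA$. In the principal case $\dim Z = 1$, the second bullet of that theorem produces an irreducible component $C \subseteq Z \subseteq B$ with $K_X \cdot C < 0$, directly contradicting the assumption that $K_X$ is nef. This closes out the main case.

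The only remaining case is $\dim Z = 0$, which is not directly covered by the three bullets of Theorem~\ref{thm:collected_theorem}. To handle it I would argue separately that a non-zero admissible subcategory of $\Dbcoh(X)$ on a smooth connected projective variety of positive dimension cannot be supported on a finite set of closed points. A Serre-functor argument is natural: the Serre functor inherited from $-\otimes\omega_X[\dim X]$ acts on objects supported at smooth points as the shift $[\dim X]$ (since $\omega_X$ is locally trivial there), and this forces $\cA$ into a form incompatible with being a proper admissible component of $\Dbcoh(X)$ of a connected variety. I would expect this input to already be implicit in the Kawatani--Okawa argument, or to be addressed in Section~\ref{sec:negativity_for_canonical} where this corollary is placed.

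The main obstacle is therefore not in this corollary itself but in Theorem~\ref{thm:collected_theorem}(2), whose negativity conclusion provides exactly the contradiction with nefness that we need. Once that theorem is available, the present statement becomes a mechanical combination with~\cite{kawatani-okawa}, and the hypothesis $\dim \mathrm{Bs}\,|K_X| \leq 1$ is used precisely to place the Kawatani--Okawa component into the one-dimensional support regime where Theorem~\ref{thm:collected_theorem}(2) applies.
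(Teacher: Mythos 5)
Your proof matches the paper's strategy in the main case: invoke Kawatani--Okawa to place one component $\cA$ of the decomposition on $\mathrm{Bs}\,|K_X|$, observe $\dim\supp(\cA)\le 1$, and then contradict nefness via the negativity of $K_X\cdot C$ for some component $C$. The only place you diverge is the edge case $\dim\supp(\cA)=0$. You correctly noticed that the second bullet of Theorem~\ref{thm:collected_theorem} is stated only for $\dim Z=1$, so the zero-dimensional case needs separate handling, and you sketch a Serre-functor argument: for $A$ supported on a finite set, $A\otimes\omega_X\cong A$, so Serre duality converts $\RHom(A,B)$ into $\RHom(B,A[n])^*$, which vanishes for $B\in\emptyperp\cA$; this makes $\cA$ and $\emptyperp\cA$ completely orthogonal and forces $X$ to be disconnected. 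That argument is valid (though you left the last step a bit implicit), but it is not what the paper does. In the paper, the body-text version of the second bullet, Theorem~\ref{thm:no_nefness_on_support}, is already stated for \emph{all irreducible components of $\supp(\cA)$ of dimension at most one}, and its proof disposes of zero-dimensional components at the outset by Lemma~\ref{lem:no_isolated_points_in_support} (itself a consequence of Lemma~\ref{lem:isolated_points_imply_skyscrapers}): the support of a nonzero admissible subcategory has no isolated points, since an isolated point would force a skyscraper sheaf into $\cA$ and hence $\supp(\cA)=X$. So the paper's route through Corollary~\ref{cor:no_curve_supports_when_minimal} handles the degenerate case without any appeal to the Serre functor. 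Your Serre-duality replacement buys conceptual clarity about why zero-dimensional supports are impossible, while the paper's no-isolated-points lemma is more elementary (it only uses morphisms to skyscraper sheaves and semiorthogonality) and, more importantly, is already a standing ingredient used elsewhere in the paper.
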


This represents significant progress towards the following conjecture:

\begin{conjecture}[{``well-known'' according to \cite[Conj.~1.6]{bgl-symmetric-curves}}]
  Let $X$ be a smooth projective variety. If the canonical bundle~$K_X$ is nef and effective, then $X$ admits no non-trivial semiorthogonal decompositions.
\end{conjecture}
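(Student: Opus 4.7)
The plan is to combine the results of this paper with the Kawatani--Okawa base locus theorem. Assume for contradiction that $\Dbcoh(X) = \langle \cA, \cB \rangle$ is a nontrivial semiorthogonal decomposition, with $K_X$ nef and effective. By \cite[Thm.~1.1]{kawatani-okawa}, at least one of the components — say $\cA$ — has support $Z$ contained in the base locus $B$ of $|K_X|$. Since $K_X$ is effective, $B$ is a proper subvariety of $X$, so $\cA$ is an admissible subcategory with proper support, and Theorem~\ref{thm:collected_theorem} applies.

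When $\dim Z \leq 1$, the proof finishes quickly. For $\dim Z = 1$, Theorem~\ref{thm:collected_theorem} produces an irreducible component $C \subset Z$ with $K_X \cdot C < 0$, contradicting the nefness of $K_X$. For $\dim Z = 0$, the support is a finite set of closed points, and on any smooth projective variety of positive dimension there is no nonzero admissible subcategory supported on finitely many points: standard Serre functor arguments (no object of $\Dbcoh(X)$ supported at a single point on smooth $X$ with $\dim X \geq 1$ is exceptional, since $\Ext^*$ of a skyscraper has nonvanishing higher terms) rule this out. This path already yields Corollary~\ref{cor:nef_with_curve_base_locus_intro}.

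The main obstacle is the case $\dim Z \geq 2$, which is not covered by Theorem~\ref{thm:collected_theorem}. The most natural attempt is an induction on $\dim Z$ via a general very ample hyperplane section $H \subset X$: one would hope to extract from $\cA$ an admissible subcategory of $\Dbcoh(H)$ supported on $Z \cap H$, and iterate until the support is a curve. This approach has two immediate defects. First, admissible subcategories do not functorially restrict to divisors — neither pullback nor a fiber-product construction preserves semiorthogonality in general — so obtaining such an $\cA_H$ is itself nontrivial. Second, the adjunction $K_H = (K_X + H)|_H$ need not be nef even if $K_X$ is, so the inductive hypothesis on $H$ may simply fail.

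A more promising route is to strengthen the second bullet of Theorem~\ref{thm:collected_theorem} to arbitrary support dimension: to prove that any admissible subcategory with proper support contains in its support a rational curve $C$ with $K_X \cdot C < 0$, regardless of $\dim Z$. The first bullet already guarantees a rational curve in every one-dimensional irreducible component of $Z$; the real challenge is to produce a $K_X$-negative curve sweeping out a higher-dimensional component. I expect this to require extending the deformation-theoretic mechanism used in the $\dim Z = 1$ case to families of rational curves covering $Z$, and this is where the essential difficulty lies.
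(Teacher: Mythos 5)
This statement is labeled a \emph{conjecture} in the paper precisely because it is not proved there: the paper only establishes it when $\dim \mathrm{Bs}\,|K_X| \le 1$ (Corollary~\ref{cor:nef_with_curve_base_locus}), and the introduction explicitly notes that the case $\dim \mathrm{Bs}\,|K_X| = 0$ is due to Kawatani--Okawa while the case $\dim \mathrm{Bs}\,|K_X| = 1$ is new. Your sketch correctly reproduces exactly this state of affairs and correctly flags the remaining gap, but you should be clear that what you have written is not a proof of the conjecture as stated.

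Two smaller points on the cases you do handle. For $\dim Z = 1$ you invoke the second bullet of Theorem~\ref{thm:collected_theorem}, which is the paper's own route via Corollary~\ref{cor:nef_with_curve_base_locus}, so that part aligns. For $\dim Z = 0$ your argument is not quite right as written: the observation that a skyscraper sheaf on a positive-dimensional smooth variety is not exceptional does not by itself rule out an admissible subcategory supported on finitely many points, since an admissible subcategory need not be generated by a single exceptional object. The clean argument, which the paper gives, is that an isolated point $p$ of $\supp(\cA)$ forces $\OO_p \in \cA$ (Lemma~\ref{lem:isolated_points_imply_skyscrapers}), which in turn forces $\supp(\cA) = X$, contradicting properness of the support; equivalently, $\supp(\cA)$ has no isolated points (Lemma~\ref{lem:no_isolated_points_in_support}), so a zero-dimensional support must be empty.

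Your diagnosis of the obstruction at $\dim Z \ge 2$ — that neither a hyperplane-section induction nor the existing invariance machinery extends — agrees with the paper's own assessment that the conjecture remains open beyond base locus dimension one. Proposing to extend the second bullet of Theorem~\ref{thm:collected_theorem} to arbitrary support dimension is a reasonable direction, but note that the paper's mechanism relies essentially on Lemma~\ref{lem:infinitely_extendable_on_curves}, i.e., on the fact that for one-dimensional $\widetilde{Z}$ every line bundle is infinitely extendable, which is precisely what fails when components of $Z$ have dimension $\ge 2$. Any honest extension would have to grapple with the failure of infinite extendability, not just with producing covering families of rational curves.
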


Note that the effectivity of $K_X$ is equivalent to the dimension of the base locus of $K_X$ being strictly smaller than $\dim X$. In the case where $\dim \mathrm{Bs} |K_X|$ is zero the conjecture is proved in \cite[Cor.~1.3]{kawatani-okawa}, and now Corollary~\ref{cor:nef_with_curve_base_locus_intro} covers the case where $\dim \mathrm{Bs} |K_X|$ is one. This corollary also completes the proof of the following statement about surfaces, conjectured by Okawa in \cite[Conj.~1.8]{okawa-irregular}:

\begin{theorem}[{see Theorem \ref{thm:minimal_surfaces_indecomposable}}]
  \label{thm:minimal_surfaces_indecomposable_intro}
  A smooth projective surface~$X$ with nef canonical class~$K_X$ admits a non-trivial semiorthogonal decomposition of $\Dbcoh(X)$ if and only if $\OO_X$ is an exceptional object, i.e., if $H^1(\OO_X) = H^2(\OO_X) = 0$.
\end{theorem}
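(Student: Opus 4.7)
The plan is to treat the two implications separately, with the ``if'' direction being essentially formal and the ``only if'' direction proceeding by a case split on the value of $p_g(X) = h^2(\OO_X)$.

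The ``if'' direction will be immediate: granting $H^1(\OO_X) = H^2(\OO_X) = 0$ forces $\End^\bullet(\OO_X) = \kk$, so $\OO_X$ is exceptional, the subcategory $\langle \OO_X \rangle \subset \Dbcoh(X)$ is admissible, and its right orthogonal is non-zero since $\dim X > 0$; the resulting semiorthogonal decomposition is non-trivial.

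For the ``only if'' direction I would assume $X$ admits a non-trivial semiorthogonal decomposition and distinguish two cases. If $p_g(X) \geq 1$, then $|K_X|$ is non-empty, and since the base locus of a non-empty linear system on a surface is contained in every effective member of the system, it has dimension at most one. Corollary~\ref{cor:nef_with_curve_base_locus_intro} then forces $X$ to be indecomposable, contradicting the hypothesis, and therefore $h^2(\OO_X) = 0$. In the remaining case $p_g(X) = 0$ with $q(X) \geq 1$, the surface $X$ is minimal, irregular, and has nef canonical class together with vanishing geometric genus; this regime is handled by Okawa's prior work~\cite{okawa-irregular}, in which indecomposability of $\Dbcoh(X)$ is established under exactly these hypotheses. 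This again contradicts the assumed decomposition and yields $h^1(\OO_X) = 0$, so $\OO_X$ is exceptional.

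The main obstacle is the first case of the ``only if'' direction, which is precisely what requires the new Corollary~\ref{cor:nef_with_curve_base_locus_intro}. Prior to the present paper the Kawatani--Okawa theorem~\cite[Thm.~1.1]{kawatani-okawa} could locate an admissible component on the base locus of $|K_X|$, but there was no mechanism available to preclude admissible subcategories supported on a one-dimensional base locus along which $K_X$ is nef; the negativity conclusion of Theorem~\ref{thm:collected_theorem} supplies exactly this missing obstruction. The irregular-with-$p_g=0$ case, by contrast, reduces by citation to Okawa's earlier result.
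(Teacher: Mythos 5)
Your proof is correct and follows essentially the same route as the paper: the trivial "if" direction, and for the converse the same dichotomy on $p_g(X)$, invoking Corollary~\ref{cor:nef_with_curve_base_locus_intro} when $p_g \geq 1$ (where effectivity of $K_X$ forces $\dim \mathrm{Bs}|K_X| \leq 1$) and citing Okawa's earlier indecomposability result for minimal irregular surfaces with $p_g = 0$.
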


The key technical novelty of this paper is that we apply the theory of Picard schemes of (possibly non-reduced, reducible) proper curves to put constraints on admissible subcategories supported on those curves (Theorem~\ref{thm:admissible_invariance}). Thus we generalize Kawatani--Okawa invariance theorem \cite[Thm.~1.4]{kawatani-okawa}, which says that if~$X$ is a smooth projective variety, then any admissible subcategory~$\cA \subset \Dbcoh(X)$ is preserved by the action of the connected component~$\Pic^0(X)$  on~$\Dbcoh(X)$. That theorem is proved by considering a family of admissible subcategories in $\Dbcoh(X)$ parametrized by the scheme $\Pic^0(X)$. 

We specialize to the case where the admissible subcategory $\cA \subset \Dbcoh(X)$ is (set-theoretically) supported on a subvariety $Z \subset X$. The main complication, as compared to the original Kawatani--Okawa setting, is that instead of a family of admissible subcategories in $\Dbcoh(X)$ we get, at best, a family of functors $\cA \to \Dbcoh(X)$ parametrized by $\Pic^0(\widetilde{Z})$ for some infinitesimal thickening $Z \subset \widetilde{Z}$. Thus both the proof and the statement of the invariance we prove are more complicated than in \cite{kawatani-okawa} or in other known generalizations~\cite{lin-indecomposability, caucci-indecomposability}. The details are explained in Section~\ref{sec:invariance}.

\textbf{Structure of the paper}. In Section~\ref{sec:basics} we recall the definition of admissible subcategories and some of their properties. Section~\ref{sec:supported_objects} contains several observations about objects and admissible subcategories in derived categories of coherent sheaves which are supported on a proper subvariety. In Section~\ref{sec:picard} we discuss the Picard scheme and define \textit{infinitely extendable} line bundles on a subvariety. Section~\ref{sec:invariance} is the technical core of the paper: in this section we strengthen the $\Pic^0$-rigidity of admissible subcategories proved in \cite{kawatani-okawa} for admissible subcategories supported on a subvariety by using infinitely extendable line bundles on some infinitesimal neighborhood of that subvariety. As an application, in Section~\ref{sec:rigidity} we prove the first claim of Theorem~\ref{thm:collected_theorem} by considering the projections of skyscraper sheaves. Then, in Section~\ref{sec:negativity_for_canonical}, we use the rigidity to show that a configuration of curves cannot support an admissible subcategory if all of them intersect the canonical class non-negatively. Section~\ref{sec:self_intersections_negative} does not depend on the previous sections; there we show that a configuration of curves in a surface cannot support an admissible subcategory if all those curves have non-negative self-intersection.

\textbf{Notation}. All schemes and triangulated categories in this paper are over a field $\kk$. All functors are assumed to be derived functors between derived categories of coherent sheaves, unless otherwise noted.

\textbf{Acknowledgments}. I thank Alexander Kuznetsov for helpful discussions.

\textbf{Funding information}. This work is supported by the Russian Science Foundation grant no.~24-71-10092,  https://rscf.ru/en/project/24-71-10092/.

\section{Admissible subcategories}
\label{sec:basics}

In this section we briefly recall the notions of admissible subcategories and semiorthogonal decompositions. A more thorough discussion of those notions can be found, for example, in the book \cite{huybrechts-fm} or in the paper~\cite{bondal-kapranov}.

An \textit{admissible subcategory} of a triangulated category $T$ is a strictly full triangulated subcategory $\cA \subset T$ such that the inclusion functor $\iota\colon \cA \monoarrow T$ has both the left and the right adjoint.

A (strong) \textit{semiorthogonal decomposition} of the category $T$ is a pair of subcategories $\langle \cA, \cB \rangle$ such that:
\begin{itemize}
\item both $\cA$ and $\cB$ are admissible subcategories of $T$;
\item the semiorthogonality holds: for any objects $A \in \cA$ and $B \in \cB$ we have the vanishing~$\RHom_{T}(B, A) = 0$;
\item the smallest triangulated subcategory of $T$ containing both $\cA$ and $\cB$ is $T$.
\end{itemize}
There is a more general notion of a not necessarily strong semiorthogonal decomposition, where the component subcategories are not required to be admissible, but in this paper we only consider semiorthogonal decompositions of smooth projective varieties, which are automatically strong, so we omit the word ``strong''.

Given a semiorthogonal decomposition $T = \langle \cA, \cB \rangle$, for any object $E \in T$ there exists a unique up to a unique isomorphism distinguished triangle in $T$
\[ B \to E \to A \to B[1], \]
where $A \in \cA$ and $B \in \cB$. We call it the \textit{projection triangle of $E$}. The projection triangle is functorial; the objects $B$ and $A$ are isomorphic to $\cB_{R}(E)$ and $\cA_{L}(E)$, respectively, where~$\cB_{R}\colon T \to \cB$ and~$\cA_{L}\colon T \to \cA$ are the right and the left adjoint functors, respectively, to the inclusion functors $\cB \monoarrow T$ and $\cA \monoarrow T$. We call $\cB_{R}$ and $\cA_{L}$ the \textit{right} and the \textit{left projection} functors for the semiorthogonal decomposition $T = \langle \cA, \cB \rangle$.

For an admissible subcategory $\cA \subset T$ in a triangulated category, we define the full subcategory
\[ \emptyperp \cA := \{ B \in T \,\, | \,\, \RHom(B, A) = 0 \,\, \text{for any} \,\, A \in \cA \} \]
of the category~$T$. The inclusion functor $\emptyperp \cA \monoarrow T$ always has a right adjoint, but when~$T$ is a \textit{saturated} category (see \cite{bondal-kapranov}; e.g., the category $\Dbcoh(X)$ for a smooth projective variety~$X$), the category $\emptyperp \cA$ is in fact admissible, and there is a semiorthogonal decomposition~$T = \langle \emptyperp \cA, \cA, \rangle$.

A useful way to define functors between derived categories of coherent sheaves is via \textit{Fourier--Mukai transforms}: given smooth proper varieties $X$ and $Y$ and an object $K \in \Dbcoh(X \times Y)$, the Fourier--Mukai transform with the kernel $K$ is the functor $\Phi_{K}\colon \Dbcoh(X) \to \Dbcoh(Y)$ defined by the formula $\Phi_{K}(E) := \pi_{Y *}(\pi_{X}^* (E) \otimes K)$, where $\pi_X$ and $\pi_Y$ are projections from~$X \times Y$ to~$X$ and~$Y$, respectively.

\section{Objects supported on closed subschemes}
\label{sec:supported_objects}

This section contains a collection of well-known basic results about objects in the derived category supported (set-theoretically) on a closed subset of the ambient variety, as well as some slightly less well-known, but easy to prove, properties of admissible subcategories consisting of objects with non-full support.

We use the following notion of a (set-theoretical) support of an object in the bounded derived category:

\begin{definition}
  \label{def:support_of_an_objects}
  Let $X$ be a scheme, and let $E \in \Dbcoh(X)$ be an object. Then the set-theoretical support $\supp(E)$ is the union $\cup_{i \in \ZZ} \supp(\mathcal{H}^{i}(E))$ of the set-theoretical supports of the cohomology sheaves of $E$.
\end{definition}

We say that an object $E \in \Dbcoh(X)$ (or a coherent sheaf $\mathcal{F} \in \Coh(X)$) is \textit{supported on the subscheme $Z \subset X$} if the set-theoretical support of $E$ (or $\mathcal{F}$, respectively) is contained in $Z$.

One can probe the support by looking at maps from or to skyscraper sheaves:

\begin{lemma}
  \label{lem:probing_support_object}
  Let $X$ be a smooth scheme, and let $E \in \Dbcoh(X)$ be an object. Then a closed point~$p \in X$ lies in $\supp(E)$ if and only if $\RHom_{X}(E, \OO_p) \neq 0$, or equivalently if and only if~$\RHom_{X}(\OO_p, E) \neq 0$, where~$\OO_p$ is the skyscraper sheaf at the point~$p$. Moreover, if~$p \in \supp(E)$, then for some~$i \in \ZZ$ there exists a morphism $E \to \OO_p [-i]$ which induces a nonzero map~$\mathcal{H}^{i}(E) \to \OO_p$ on the level of cohomology sheaves.
\end{lemma}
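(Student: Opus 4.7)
My plan is to localize the problem to a small open neighborhood of $p$ and then reduce the statement to Nakayama's lemma combined with the Koszul resolution of $\OO_p$. The first observation is that since $\OO_p$ is a sheaf supported at $\{p\}$, the complexes $\intRHom_X(E, \OO_p)$ and $\intRHom_X(\OO_p, E)$ are also set-theoretically supported at $\{p\}$; consequently the global groups $\RHom_X(E, \OO_p)$ and $\RHom_X(\OO_p, E)$ agree with their analogues computed on any open neighborhood $U \ni p$. Membership in $\supp(E)$ also depends only on $E|_U$, so I am free to shrink $X$ around $p$ throughout the proof.

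If $p \notin \supp(E)$, I would choose $U$ disjoint from $\supp(E)$, so that all cohomology sheaves of $E|_U$ vanish; then $E|_U \simeq 0$ in $\Dbcoh(U)$ by a straightforward induction on cohomological length via truncation triangles, and both $\RHom$ groups vanish. For the converse and the ``moreover'' statement, I would let $i$ be the largest integer with $p \in \supp(\mathcal{H}^i(E))$; since $E$ has only finitely many nonzero cohomology sheaves, $U$ can be shrunk so that $\mathcal{H}^j(E)|_U = 0$ for all $j > i$. The truncation triangle $\tau^{<i}(E|_U) \to E|_U \to \mathcal{H}^i(E|_U)[-i]$ then produces a morphism $\alpha\colon E|_U \to \mathcal{H}^i(E|_U)[-i]$ inducing the identity on the top cohomology sheaf. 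Nakayama's lemma gives $\mathcal{H}^i(E|_U) \otimes k(p) \neq 0$, hence a surjection $\beta\colon \mathcal{H}^i(E|_U) \twoheadrightarrow \OO_p$, and the composition $\beta[-i] \circ \alpha$ is the desired morphism $E|_U \to \OO_p[-i]$. The initial localization then lifts it to a morphism $E \to \OO_p[-i]$ on $X$, yielding the ``moreover'' clause and in particular $\RHom_X(E, \OO_p) \neq 0$.

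For the remaining non-vanishing $\RHom_X(\OO_p, E) \neq 0$, I would invoke the Koszul resolution of $\OO_p$, which on a smooth scheme of local dimension $n$ at $p$ has length $n$. It shows that $\Ext^j_U(\OO_p, \mathcal{F})$ vanishes for $j > n$ for any coherent $\mathcal{F}$ on $U$, while $\Ext^n_U(\OO_p, \mathcal{F})$ is, up to a line twist at $p$, the fibre $\mathcal{F}_p/\mathfrak{m}_p \mathcal{F}_p$, which is nonzero iff $\mathcal{F}_p \neq 0$ by Nakayama. Feeding this amplitude bound into the hyperext spectral sequence applied to each of the finitely many cohomology sheaves of $\tau^{<i}(E|_U)$ yields $\Ext^{n+i}_U(\OO_p, \tau^{<i}E|_U) = \Ext^{n+i+1}_U(\OO_p, \tau^{<i}E|_U) = 0$. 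The long exact sequence obtained by applying $\RHom_U(\OO_p, -)$ to the same truncation triangle then collapses to an isomorphism $\Ext^{n+i}_U(\OO_p, E|_U) \cong \Ext^n_U(\OO_p, \mathcal{H}^i(E|_U)) \neq 0$, completing the proof. The only real obstacle is the bookkeeping in combining the amplitude bound with the long exact sequence, but this step is entirely routine.
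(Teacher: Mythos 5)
Your proof is correct and carries out, in full detail, essentially the argument the paper gestures at by citing Huybrechts: localize near $p$, use canonical truncation to extract the top cohomology sheaf of $E$ supported at $p$, apply Nakayama there, and for the second non-vanishing feed the Koszul-resolution amplitude bound into the hyperext spectral sequence to isolate $\Ext^{n+i}_U(\OO_p,E|_U)\cong\Ext^n_U(\OO_p,\mathcal{H}^i(E|_U))$. All the individual steps (the reduction to $U$ via adjunction/support of $\intRHom$, the degree bookkeeping, the identification of $\Ext^n(\OO_p,\mathcal{F})$ with the fibre $\mathcal{F}_p/\mathfrak{m}_p\mathcal{F}_p$) check out.
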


\begin{proof}
  A standard spectral sequence argument reduces this to the case $E \in \Coh(X)$ of coherent sheaves. See~\cite[Ex.~3.30]{huybrechts-fm}.
\end{proof}

For coherent sheaves there is a well-behaved notion of the scheme-theoretic support: each coherent sheaf set-theoretically supported on a closed subset $Z \subset X$ is isomorphic to a pushforward of a coherent sheaf on some infinitesimal neighborhood of $Z$, and there is an optimal choice of a neighborhood and a unique choice of the lift. For objects in the derived category the situation is more complicated: an object $E$ with $\supp(E) \subset Z$ does lift to some infinitesimal neighborhood of $Z$, but not quite in a unique way. The following two statements describe what happens. They are probably well-known since the SGA time, but I couldn't find a convenient reference preceding Rouquier's paper \cite{rouquier-dimension}.

\begin{proposition}
  \label{prop:derived_category_with_support}
  Let $X$ be a separated Noetherian scheme, and let $Z \subset X$ be a closed subset. Let $\Coh_{Z}(X) \subset \Coh(X)$ be the subcategory of coherent sheaves on $X$ whose set-theoretical support is contained in $Z$. Then the induced functor $D^{b}(\Coh_{Z}(X)) \to D^{b}(\Coh(X))$ between bounded derived categories is fully faithful, and its image is the subcategory of objects~$E \in \Dbcoh(X)$ such that $\supp(E) \subset Z$.
\end{proposition}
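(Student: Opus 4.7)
The plan is to prove the two assertions in turn: first that the functor $D^b(\Coh_Z(X)) \to D^b(\Coh(X))$ is fully faithful, and second that its essential image coincides with the subcategory of objects $E \in \Dbcoh(X)$ with $\supp(E) \subset Z$.

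Since $\Coh_Z(X)$ is visibly a Serre subcategory of $\Coh(X)$ (closed under subobjects, quotients, and extensions), I would reduce full faithfulness to a standard abstract criterion: for a Serre subcategory $\mathcal{C} \subset \mathcal{A}$ of an abelian category, the natural functor $D^b(\mathcal{C}) \to D^b(\mathcal{A})$ is fully faithful with essential image $\{E \in D^b(\mathcal{A}) : \mathcal{H}^i(E) \in \mathcal{C} \text{ for all } i\}$ provided that for every monomorphism $C \monoarrow A$ in $\mathcal{A}$ with $C \in \mathcal{C}$ there exists $C' \in \mathcal{C}$ and a morphism $A \to C'$ whose restriction to $C$ is again a monomorphism. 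Informally, this approximation lets one rewrite any representing roof of a morphism, or any Yoneda Ext cocycle, entirely within $\mathcal{C}$.

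To verify this criterion for $\Coh_Z(X) \subset \Coh(X)$, I would invoke the Artin-Rees lemma. Let $\mathcal{I}$ be the ideal sheaf of the reduced subscheme $Z_{\mathrm{red}} \subset X$. Given $\mathcal{F} \in \Coh_Z(X)$ and a monomorphism $\mathcal{F} \monoarrow \mathcal{G}$ in $\Coh(X)$, the coherent sheaf $\mathcal{F}$ is annihilated by some power $\mathcal{I}^m$. Artin-Rees, valid on a separated Noetherian scheme, supplies an integer $k$ with $\mathcal{I}^n \mathcal{G} \cap \mathcal{F} \subset \mathcal{I}^{n-k}\mathcal{F}$ for all $n \geq k$; taking $n \geq m+k$ gives $\mathcal{I}^n \mathcal{G} \cap \mathcal{F} = 0$. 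The quotient $\mathcal{G}/\mathcal{I}^n \mathcal{G}$ is annihilated by $\mathcal{I}^n$ and hence supported on $Z$, while the canonical surjection $\mathcal{G} \twoheadrightarrow \mathcal{G}/\mathcal{I}^n \mathcal{G}$ restricts to a monomorphism on $\mathcal{F}$, as required.

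For the essential image, I would induct on the cohomological amplitude of $E$, that is, the quantity $\max\{i : \mathcal{H}^i(E) \neq 0\} - \min\{i : \mathcal{H}^i(E) \neq 0\}$. An object of amplitude zero is a shifted coherent sheaf in $\Coh_Z(X)$ and is trivially in the image. For the inductive step, letting $k$ be the smallest integer with $\mathcal{H}^k(E) \neq 0$, the truncation triangle
\[ \mathcal{H}^k(E)[-k] \to E \to \tau_{>k} E \to \mathcal{H}^k(E)[-k+1] \]
exhibits $E$ as a cone between two objects of strictly smaller amplitude, both supported on $Z$ and hence coming from $D^b(\Coh_Z(X))$ by induction; by the full faithfulness established above, the connecting morphism also lifts, and hence so does $E$. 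The main obstacle is the approximation step: reconciling the set-theoretic support condition with the sheaf-theoretic structure is exactly what Artin-Rees accomplishes, and this is where the Noetherian hypothesis is essential. Everything downstream is routine homological algebra.
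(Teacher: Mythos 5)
Your proof is correct and takes essentially the same route the paper does: recognize $\Coh_Z(X)$ as a Serre subcategory, invoke a Keller-type approximation criterion for $D^b(\Coh_Z(X)) \to D^b(\Coh(X))$ to be fully faithful with the expected essential image, and verify that criterion via the Artin--Rees lemma. The paper accomplishes this by citation (to Keller's~(1.15)~Lemma~(c1) and to Rouquier's Lem.~7.40, where the Artin--Rees verification appears), whereas you write out the Artin--Rees step explicitly and supplement it with a straightforward induction on cohomological amplitude for the essential image.
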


\begin{proof}
  This is essentially proved in \cite[Lem.~7.40]{rouquier-dimension}. More formally, since the subcategory
  \[
    \Coh_{Z}(X) \subset \Coh(X)
  \]
  is a Serre subcategory, there are known criteria for the equivalence between $D^{b}(\Coh_{Z}(X))$ and the subcategory of $\Dbcoh(X)$ consisting of objects whose cohomology sheaves lie in $\Coh_{Z}(X)$, such as the condition \cite[(1.15), Lemma, (c1)]{keller-exact}, which is verified in the proof of \cite[Lem.~7.40]{rouquier-dimension} using the Artin--Rees Lemma. Note that the case of an affine~$X$ is explicitly mentioned in \cite[(1.15), Example, (b)]{keller-exact} as an application of the general criterion.
\end{proof}

\begin{corollary}
  \label{cor:objects_lift_to_support}
  Let $X$ be a separated Noetherian scheme, and let $Z \subset X$ be a closed subscheme defined by the ideal sheaf $\mathcal{I} \subset \OO_{X}$. For any $n > 0$ let $i_{n}\colon Z_n \monoarrow X$ be the inclusion of the closed subscheme defined by the ideal sheaf $\mathcal{I}^n$, and for $n < m$ let $i_{n, m}\colon Z_n \monoarrow Z_m$ be the natural embedding. Then:
  \begin{itemize}
  \item For any object $E \in \Dbcoh(X)$ with $\supp(E) \subset Z$ there exists some $n \gg 0$ and an object $\overline{E} \in \Dbcoh(Z_n)$ such that $E \iso i_{n *}(\overline{E})$.
  \item For any $n_1, n_2 > 0$ and any two objects $E_1 \in \Dbcoh(Z_{n_1})$, $E_2 \in \Dbcoh(Z_{n_2})$ such that $i_{n_1 *}(E_1) \iso i_{n_2 *}(E_2)$ in $\Dbcoh(X)$ there exists some $N \geq \max(n_1, n_2)$ such that $i_{n_1, N *}(E_1) \iso i_{n_2, N *}(E_2)$ in $\Dbcoh(Z_N)$.
  \end{itemize}
\end{corollary}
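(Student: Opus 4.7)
The plan is to deduce both parts of the corollary directly from Proposition~\ref{prop:derived_category_with_support}, using the elementary observation that on a Noetherian scheme every coherent sheaf with set-theoretical support in $Z$ is annihilated by some power of the defining ideal $\mathcal{I}$; equivalently, $\Coh_Z(X) = \bigcup_{n \geq 1} \Coh(Z_n)$ as a filtered union of full abelian subcategories. Since a bounded complex has only finitely many nonzero terms, this lets us descend any such complex in $\Coh_Z(X)$ to a single $\Coh(Z_n)$ for $n$ sufficiently large.

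For the first bullet, I would use Proposition~\ref{prop:derived_category_with_support} to lift $E$ to an object $\tilde{E} \in D^b(\Coh_Z(X))$ represented by a bounded complex $F^\bullet$ of coherent sheaves in $\Coh_Z(X)$. Each of the finitely many nonzero terms $F^i$ lies in $\Coh(Z_{n_i})$ for some $n_i$, and the differentials, being $\OO_X$-linear maps between sheaves annihilated by $\mathcal{I}^n$, are automatically $\OO_{Z_n}$-linear once all the terms are $\OO_{Z_n}$-modules. Setting $n := \max_i n_i$, the complex $F^\bullet$ lives in $\Coh(Z_n)$ and its class $\overline{E} \in \Dbcoh(Z_n)$ satisfies $i_{n*}\overline{E} \iso E$ by construction.

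For the second bullet, I first reduce to the case $n_1 = n_2 = M$ by replacing both objects by their pushforwards to $Z_M$ for $M := \max(n_1, n_2)$. Full faithfulness in Proposition~\ref{prop:derived_category_with_support} lifts the given isomorphism $\phi\colon i_{M*}E_1 \isoarrow i_{M*}E_2$ in $\Dbcoh(X)$ to an isomorphism $\tilde{\phi}\colon E_1 \isoarrow E_2$ in $D^b(\Coh_Z(X))$, which may be represented by a roof $E_1 \xleftarrow{\sim} F^\bullet \to E_2$ with $F^\bullet$ a bounded complex in $\Coh_Z(X)$. Choosing $N \geq M$ so that every term of $F^\bullet$ lies in $\Coh(Z_N)$, the whole roof becomes a diagram in $\Coh(Z_N)$, and the left arrow is still a quasi-isomorphism there because the exact embedding $\Coh(Z_N) \hookrightarrow \Coh_Z(X)$ both preserves and reflects exactness. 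The roof thus defines a morphism $\psi\colon i_{M, N*}E_1 \to i_{M, N*}E_2$ in $\Dbcoh(Z_N)$ whose further pushforward to $\Dbcoh(X)$ equals $\phi$.

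The only genuinely delicate point, and the step I expect to be the main obstacle, is checking that $\psi$ itself is an isomorphism in $\Dbcoh(Z_N)$, rather than merely having this property after one more pushforward. Although $i_{N*}\colon \Dbcoh(Z_N) \to \Dbcoh(X)$ is typically not fully faithful when $N > 1$, it is nevertheless \emph{conservative}: being exact, it commutes with cohomology sheaves, and the underlying embedding $\Coh(Z_N) \hookrightarrow \Coh(X)$ of abelian categories is fully faithful and hence reflects isomorphisms. Therefore $i_{N*}\psi = \phi$ being invertible forces $\psi$ itself to be an isomorphism, and the proof is complete.
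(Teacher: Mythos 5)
Your proof is correct and follows essentially the same route as the paper: both parts are deduced from Proposition~\ref{prop:derived_category_with_support}, using that a bounded complex in $\Coh_Z(X)$ descends to some $\Coh(Z_n)$ because each term is annihilated by a power of $\mathcal{I}$, representing the isomorphism by a roof lifted to a common $Z_N$, and then invoking exactness and conservativity of $i_{N*}$ to conclude that the lifted morphism is an isomorphism. The only stylistic difference is your explicit reduction to $n_1 = n_2 = M$, which the paper folds into the roof argument directly.
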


\begin{proof}
  We start with the first claim. By Proposition~\ref{prop:derived_category_with_support} any object~$E \in \Dbcoh(X)$ such that~$\supp(E) \subset Z$ is isomorphic to an object in $D^{b}(\Coh_{Z}(X))$, which means that it can be represented by a finite complex 
  \[ 0 \to \mathcal{E}_{a} \to \dots \to \mathcal{E}_{b} \to 0 \]
  of coherent sheaves on $X$ such that each sheaf $\mathcal{E}_{i}$ is supported, set-theoretically, on $Z$. Each coherent sheaf in $\Coh_{Z}(X)$ is scheme-theoretically supported on some finite infinitesimal neighborhood of $Z$, so we may choose $n > 0$ so that any sheaf $\mathcal{E}_{i} \in \Coh_{Z}(X)$ from the finite complex above admits a lift to $Z_n \subset X$. Since the pushforward $i_{n *}\colon \Coh(Z_n) \to \Coh(X)$ is fully faithful on coherent sheaves, the differentials of the complex also lift to $Z_n$ (uniquely), thus producing an object $\overline{E} \in \Dbcoh(Z_n)$ such that $i_{n *}(\overline{E}) \iso E$.

  The second claim is proved similarly: by Proposition~\ref{prop:derived_category_with_support} any morphism $i_{n_1 *}(E_1) \to i_{n_2 *}(E_2)$ in $\Dbcoh(X)$ can be represented by a morphism in the derived category $D^{b}(\Coh_{Z}(X))$, which by the definition of the derived category means a ``roof''
    \begin{equation}
      \label{eq:roof_of_lifts}
      i_{n_1 *}(E_1) \leftarrow E^\prime \xrightarrow{\qis} i_{n_2 *}(E_2)
    \end{equation}
  in the homotopy category of bounded complexes of objects from $\Coh_{Z}(X)$. As explained above, we can lift all three (bounded) complexes of coherent sheaves from~\eqref{eq:roof_of_lifts} to some common subscheme $Z_N \subset X$. Since the morphisms in the homotopy category of complexes in $\Coh(X)$ are defined in terms of morphisms in the abelian category $\Coh(X)$, the full faithfulness of the pushforward functor~$i_{N *}\colon \Coh(Z_N) \to \Coh(X)$ implies that both morphisms in~\eqref{eq:roof_of_lifts} admit a (unique) lift to the homotopy category of complexes in~$\Coh(Z_N)$. The functor~$i_{N *}\colon \Coh(Z_N) \to \Coh(X)$ is exact and conservative, so any lift of a quasi-isomorphism is necessarily a quasi-isomorphism. Thus we have lifted the given isomorphism~$i_{n_1 *}(E_1) \to i_{n_2 *}(E_2)$ in $\Dbcoh(X)$ to some morphism $i_{n_1, N *}(E_1) \to i_{n_2, N *}(E_2)$ in the derived category $\Dbcoh(Z_N)$. Again, since $i_{N *}$ is exact and conservative on coherent sheaves, any lift of an isomorphism in $\Dbcoh(X)$ to $\Dbcoh(Z_N)$ is automatically an isomorphism.
\end{proof}

In this paper $Z$ usually denotes the closed subset of $X$ viewed as a reduced closed subscheme, while $\widetilde{Z}$ will denote a possibly non-reduced subscheme to which objects supported on $Z$ lift.

\begin{remark}
  \label{rem:lifting_not_unique}
  A coherent sheaf on $X$ lifts to a closed subscheme $i\colon \widetilde{Z} \monoarrow X$ if and only if its scheme-theoretic support is contained in $\widetilde{Z}$, and in this case the lift is unique. For objects in the derived category there is no hope for the uniqueness: the cohomology sheaves lift uniquely, but the additional gluing data does not. One basic issue is that $\Coh(\widetilde{Z})$ can have infinite homological dimension even when $X$ itself is smooth, so  a direct sum of two coherent sheaves in $\Coh_{Z}(X)$, shifted to degrees $0$ and $\dim X + 1$, could, in principle, be lifted as a cone of a nontrivial $\Ext^{\dim X + 1}$ class on $\widetilde{Z}$. That's far from the only problem: even if both $\widetilde{Z}$ and $X$ are smooth, the pushforward functor $i_*$ may not be injective on higher $\Ext$'s between coherent sheaves.
\end{remark}

We also use the notion of the (set-theoretical) support of an admissible subcategory. This definition is well-behaved due to the following lemma:

\begin{lemma}
  \label{lem:support_of_subcategory_is_closed}
  Let $X$ be a quasi-compact separated scheme, and let $\cA \subset \Perf(X)$ be an admissible subcategory. Then $\cup_{A \in \cA} \supp(A)$ is a closed subset of $X$.
\end{lemma}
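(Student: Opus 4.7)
The plan is to produce a single perfect complex $E \in \cA$ whose support equals $\cup_{A \in \cA} \supp(A)$. Since the support of a perfect complex is automatically closed, this will finish the proof. The natural candidate is $E := \cA_L(G)$, where $G$ is a classical generator of $\Perf(X)$ and $\cA_L \colon \Perf(X) \to \cA$ is the left adjoint to the inclusion (which exists by admissibility of $\cA$).

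First I would invoke the Bondal--Van den Bergh theorem: for a quasi-compact separated scheme $X$, the category $\Perf(X)$ coincides with the thick subcategory $\langle G \rangle$ generated by a single object $G \in \Perf(X)$. Next I would check that $E = \cA_L(G)$ generates $\cA$ as a thick subcategory. This is a standard dévissage: the full subcategory
\[
  \mathcal{S} := \{F \in \Perf(X) \,\, | \,\, \cA_L(F) \in \langle E \rangle \}
\]
is closed under shifts, cones and direct summands because $\cA_L$ is a triangulated additive functor (and $\cA$, being admissible, is idempotent complete inside $\Perf(X)$), and it contains $G$; hence $\mathcal{S} = \Perf(X)$. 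Applied to any $A \in \cA$, this gives $A = \cA_L(A) \in \langle E \rangle$.

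Finally I would combine two inclusions. Set-theoretical support is monotone under shifts, cones and direct summands (via the long exact sequence of cohomology sheaves), so $A \in \langle E \rangle$ implies $\supp(A) \subset \supp(E)$ for every $A \in \cA$. Conversely, $E$ itself lies in $\cA$, so $\supp(E) \subset \cup_{A \in \cA} \supp(A)$. The two sets therefore coincide, and the common set is closed because $E$ is perfect.

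I do not expect any serious obstacle. The only external ingredient is the existence of a classical generator of $\Perf(X)$ for a quasi-compact separated (possibly non-Noetherian, singular) scheme; once this is granted, the rest is a formal manipulation with the projection functor and thick subcategories.
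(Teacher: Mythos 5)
Your argument is correct and matches the paper's proof almost exactly: both take the Bondal--Van den Bergh classical generator $G$ of $\Perf(X)$, project it to $\cA$, observe that the projection classically generates $\cA$, and then identify $\cup_{A \in \cA}\supp(A)$ with the support of that single perfect complex, which is closed. The only difference is that you spell out the dévissage showing $\cA_L(G)$ generates $\cA$, whereas the paper simply cites essential surjectivity of the projection functor; these are the same observation.
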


\begin{proof}
  Since $X$ is quasi-compact and separated, the category $\Perf(X)$ has a classical generator~$G$, by \cite[Thm.~3.1.1]{bondal-vdbergh}. Its projection $G_{\cA}$ to $\cA \subset \Dbcoh(X)$ is a classical generator for~$\cA$ since~$\Dbcoh(X) \to \cA$ is essentially surjective. It is easy to check that $\supp(G_{\cA}) = \cup_{A \in \cA} \supp(A)$, so this is a closed subset.
\end{proof}

\begin{definition}
  \label{def:support_subcategory}
  Let $X$ be a quasi-compact separated scheme, and let $\cA \subset \Dbcoh(X)$ be an admissible subcategory. The \textit{(set-theoretical) support} of the subcategory $\cA$, denoted by~$\supp(\cA)$, is the closed subset $\cup_{A \in \cA} \supp(A)$ of $X$.
\end{definition}

Below we use the following basic observation:

\begin{lemma}
  \label{lem:probing_support_of_subcategory}
  Let $X$ be a Noetherian scheme, and let $\cA \subset \Dbcoh(X)$ be an admissible subcategory. Let $p \in X$ be a point. The following are equivalent:
  \begin{itemize}
  \item $p \in \supp(\cA)$;
  \item in the projection triangle $B_p \to \OO_p \to A_p \to B_p [1]$ for the skyscraper sheaf with respect to the semiorthogonal decomposition $\Dbcoh(X) = \langle \cA, \emptyperp \cA \rangle$ we have $p \in \supp(A_p)$.
  \end{itemize}
\end{lemma}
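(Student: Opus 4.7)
The first implication is immediate: by Definition~\ref{def:support_subcategory}, $A_p \in \cA$ gives $\supp(A_p) \subset \supp(\cA)$, so $p \in \supp(A_p)$ forces $p \in \supp(\cA)$.

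For the converse, the plan combines two ingredients. First, I would extend Lemma~\ref{lem:probing_support_object} to bounded complexes on a general Noetherian scheme in the following form: for any $E \in \Dbcoh(X)$ and any closed point $p \in X$, one has $p \in \supp(E) \iff \RHom_X(\OO_p, E) \neq 0$. Using the identification $\Ext^j_X(\OO_p, \mathcal{F}) \cong \Ext^j_{\OO_{X,p}}(k(p), \mathcal{F}_p)$ for any coherent $\mathcal{F}$ together with the hypercohomology spectral sequence $\Ext^j(\OO_p, \mathcal{H}^i(E)) \Rightarrow \Ext^{i+j}(\OO_p, E)$, this reduces to the statement that any nonzero finitely generated module over a Noetherian local ring has some nonvanishing $\Ext$ against the residue field, which follows from the finiteness of depth. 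Second, I would observe that in the semiorthogonal decomposition $\Dbcoh(X) = \langle \cA, \emptyperp\cA \rangle$ the projection $E \mapsto A_E$ is the left adjoint to the inclusion $\iota\colon \cA \monoarrow \Dbcoh(X)$, so the morphism $\eta\colon \OO_p \to A_p$ appearing in the projection triangle is precisely the unit of this adjunction evaluated at~$\OO_p$; in particular,
\[ \Hom_\cA(A_p, A[n]) \isoarrow \Hom_{\Dbcoh(X)}(\OO_p, A[n]), \qquad h \mapsto h \circ \eta, \]
is a bijection for every $A \in \cA$ and $n \in \ZZ$.

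To finish, assume $p \in \supp(\cA)$ and pick $A \in \cA$ with $p \in \supp(A)$. The probing principle yields a nonzero morphism $g\colon \OO_p \to A[n]$ for some $n$; the adjunction rewrites $g$ as $h \circ \eta$ for some nonzero $h\colon A_p \to A[n]$, forcing $\eta$ itself to be nonzero. Hence $\Hom_{\Dbcoh(X)}(\OO_p, A_p) \neq 0$, and applying the probing principle to $E = A_p$ gives $p \in \supp(A_p)$. The only slightly delicate point is verifying that the middle arrow of the projection triangle is literally the unit of the left adjoint to $\iota$, but this is a standard formal fact about semiorthogonal decompositions, so no serious obstacle arises.
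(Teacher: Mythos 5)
Your proof is correct, and it takes a genuinely different (and arguably cleaner) route than the paper. The paper's argument for the converse is by contradiction: assuming $p \notin \supp(A_p)$, it observes that the map $\OO_p \to A_p$ must vanish, so that $B_p \iso \OO_p \oplus A_p[-1]$; the semiorthogonality of $\cA$ and $\emptyperp\cA$ (and the closure of these under direct summands) then forces $A_p = 0$ and $\OO_p \in \emptyperp\cA$, which contradicts $p \in \supp(\cA)$ via the probing lemma. Your argument instead uses the adjunction directly: starting from $p \in \supp(\cA)$, you produce a nonzero map $\OO_p \to A[n]$ for some $A \in \cA$ by probing, and the universal property of the unit $\eta\colon \OO_p \to A_p$ (the left adjoint's unit) forces $\eta$ itself to be nonzero, whence $p \in \supp(A_p)$. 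Both arguments rest on the same two pillars — the probing lemma and the universal property of the projection triangle — but yours is a direct argument that avoids the ``cone of a zero map splits'' step. Your extension of Lemma~\ref{lem:probing_support_object} to Noetherian schemes is sound (the paper's own proof of Lemma~\ref{lem:probing_support_of_subcategory} implicitly relies on the same extension, since it cites Lemma~\ref{lem:probing_support_object}, which is stated only for smooth $X$); for the nontrivial direction $p \in \supp(E) \Rightarrow \RHom(\OO_p, E) \neq 0$, the depth-based nonvanishing for modules plus the hypercohomology spectral sequence does the job, though one should be explicit that the surviving class sits in the extremal total degree $\min_i\bigl(i + \mathrm{depth}_p \mathcal{H}^i(E)\bigr)$ where no incoming differentials exist, and argue that some outgoing differential fails to be injective; this is a standard if slightly fiddly check. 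Overall, no gaps.
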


\begin{proof}
  The second condition implies the first one by the definition of $\supp(\cA)$. Let us prove the reverse implication. If $p \not\in \supp(A_p)$, then the morphism $\OO_p \to A_p$ in the projection triangle is necessarily zero. The cone of a zero morphism in a triangulated category is a direct sum, hence $B_p \iso \OO_p \oplus A_p [-1]$. Since $B_p$ and $A_p$ are semiorthogonal objects, this is possible only when $A_p = 0$ and $B_p = \OO_p$. However, if the skyscraper sheaf $\OO_p$ lies in $\emptyperp \cA$, all objects in $\cA = (\emptyperp \cA)^\perp$ have to be semiorthogonal to the skyscraper sheaf $\OO_p$, which by Lemma~\ref{lem:probing_support_object} implies that $\supp(\cA)$ does not contain $p$.
\end{proof}

\begin{lemma}
  \label{lem:support_of_projection_kernel}
  Let $X$ be a smooth proper variety, and let $\cA \subset \Dbcoh(X)$ be an admissible subcategory. Let $K \in \Dbcoh(X \times X)$ be the Fourier--Mukai kernel for the (left) projection functor $\Dbcoh(X) \to \cA$. For any closed subvariety $Z \subset X$ the following are equivalent:
  \begin{itemize}
  \item $\supp(\cA) \subset Z$
  \item $\supp(K) \subset Z \times Z$.
  \end{itemize}
\end{lemma}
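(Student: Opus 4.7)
The plan is to link the support of $K$ to the support of the objects $\Phi_K(\OO_p)$ for closed points $p \in X$, using that $\Phi_K$ is the left projection onto $\cA$ and therefore restricts to the identity on $\cA$. The backward implication then follows immediately: if $\supp(K) \subset Z \times Z$, any $A \in \cA$ satisfies $A \iso \Phi_K(A) = \pi_{2*}(\pi_1^* A \otimes^L K)$, whose support lies in $\pi_2(\supp(K)) \subset Z$, and so $\supp(\cA) \subset Z$.

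For the forward implication, assume $\supp(\cA) \subset Z$. The key identity is $\Phi_K(\OO_p) \iso K|_{\{p\} \times X}$ as an object of $\Dbcoh(X)$ (identifying $\{p\} \times X$ with $X$ via $\pi_2$), which follows from the flatness of $\pi_1$ and the projection formula for the closed embedding $j_p\colon \{p\} \times X \monoarrow X \times X$. Since $\Phi_K(\OO_p)$ is the left projection of $\OO_p$, it lies in $\cA$ and therefore has support contained in $\supp(\cA) \subset Z$; moreover, when $p \notin \supp(\cA)$, the argument in the proof of Lemma~\ref{lem:probing_support_of_subcategory} shows that $\Phi_K(\OO_p) = 0$. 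Consequently, for every closed point $p \in X$ the set $\supp(K|_{\{p\} \times X})$ is contained in $Z$, and is empty whenever $p \notin Z$.

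To turn this fiberwise statement into the global inclusion $\supp(K) \subset Z \times Z$, I invoke the standard fact that for $M \in \Dbcoh(Y)$ and a closed point $y \in Y$, the membership $y \in \supp(M)$ is equivalent to the non-vanishing of the derived fiber $L\iota_y^* M$ --- this follows by applying Nakayama's lemma to the top non-zero cohomology sheaf of $M$, which survives the Tor spectral sequence. Applied to the iterated restriction $\{(p,q)\} \monoarrow \{p\} \times X \monoarrow X \times X$, this yields the equivalence $(p,q) \in \supp(K) \iff q \in \supp(K|_{\{p\} \times X})$, and combined with the previous paragraph it forces both $p \in Z$ and $q \in Z$ whenever $(p,q) \in \supp(K)$. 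The only step requiring any real care is this derived-fiber description of set-theoretic support; everything else is a routine Fourier--Mukai computation.
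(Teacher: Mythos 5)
Your proof is correct and fills in exactly the details the paper delegates to the cited reference: you probe the kernel via its derived fibers $Lj_p^*K \iso \Phi_K(\OO_p)$ and invoke Lemma~\ref{lem:probing_support_of_subcategory} to bound their supports, which is the route the paper indicates. The derived-fiber characterization of set-theoretic support that you flag as the one step requiring care is indeed correct (the $E_2^{0,q_0}$ corner of the hyper-Tor spectral sequence has neither incoming nor outgoing differentials, so Nakayama's lemma applied to the top cohomology stalk forces $L\iota_y^*M \neq 0$), and the rest is the routine Fourier--Mukai computation you describe.
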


\begin{proof}
  This is a consequence of Lemma~\ref{lem:probing_support_of_subcategory} and the definition of Fourier--Mukai transforms. For details, see, for example, \cite[Lem.~2.48]{pirozhkov-delpezzo}.
\end{proof}

Another variation of the same ideas is the following fact.

\begin{lemma}
  \label{lem:isolated_points_imply_skyscrapers}
  Let $X$ be a smooth proper variety, and let $\cA \subset \Dbcoh(X)$ be an admissible subcategory. Let $p \in X$ be a point. The following are equivalent:
  \begin{itemize}
  \item the skyscraper sheaf $\OO_p$ at the point $p$ is an object of $\cA$;
  \item $p \not\in \supp(\emptyperp \cA)$;
  \item there exists some $A \in \cA$ such that $p$ is an isolated point of $\supp(A)$.
  \end{itemize}
  Moreover, these conditions imply that $\supp(\cA) = X$.
\end{lemma}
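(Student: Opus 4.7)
The plan is to prove (1) $\Rightarrow$ (3) trivially by taking $A = \OO_p$, then establish the equivalence (1) $\Leftrightarrow$ (2) by inspecting the projection triangle of $\OO_p$, and finally prove the main implication (3) $\Rightarrow$ (1) by combining the infinitesimal lifting of Corollary~\ref{cor:objects_lift_to_support} with a thick-subcategory classification argument. The ``moreover'' claim will then drop out of the equivalence (1) $\Leftrightarrow$ (2) together with irreducibility of $X$.

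For (1) $\Leftrightarrow$ (2) I would argue exactly as in the proof of Lemma~\ref{lem:probing_support_of_subcategory}: if $\OO_p \in \cA$, then semiorthogonality gives $\RHom(B, \OO_p) = 0$ for $B \in \emptyperp \cA$, which by Lemma~\ref{lem:probing_support_object} forces $p \notin \supp(B)$; conversely, if $p \notin \supp(\emptyperp \cA)$, then in the projection triangle $B_p \to \OO_p \to A_p \to B_p[1]$ the object $B_p$ lies in $\emptyperp \cA$ and satisfies $p \notin \supp(B_p)$, so the first morphism vanishes by Lemma~\ref{lem:probing_support_object} and the triangle splits. Hence $A_p \iso \OO_p \oplus B_p[1]$, and thickness of $\cA$ puts $B_p[1] \in \cA \cap \emptyperp \cA = 0$, so $\OO_p \iso A_p \in \cA$.

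The key step is (3) $\Rightarrow$ (1). Given $A \in \cA$ with $p$ isolated in $\supp(A)$, I would write $\supp(A) = \{p\} \sqcup Z'$ for a disjoint closed subset $Z' \subset X$, and apply Corollary~\ref{cor:objects_lift_to_support} to lift $A$ to some infinitesimal neighborhood $Z_n$ of the reduced subscheme $Z = \{p\} \sqcup Z'$. Because $\{p\}$ and $Z'$ are disjoint closed subsets of $X$, the scheme $Z_n$ is itself a disjoint union of infinitesimal neighborhoods, and the lift of $A$ splits accordingly, yielding a direct-sum decomposition $A \iso A_0 \oplus A_1$ in $\Dbcoh(X)$ with $\supp(A_0) \subset \{p\}$ and $\supp(A_1) \subset Z'$. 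Since $p \in \supp(A)$, the summand $A_0$ is nonzero, and thickness of $\cA$ puts $A_0 \in \cA$. Finally, smoothness of $X$ gives $\Dbcoh(X) = \Perf(X)$, so Thomason's classification of thick subcategories of perfect complexes identifies the thick subcategory generated by $A_0$ with all perfect complexes set-theoretically supported on $\{p\}$; in particular it contains the skyscraper $\OO_p$, so $\OO_p \in \cA$.

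For the ``moreover'' part, assume $\OO_p \in \cA$. By the equivalence (1) $\Leftrightarrow$ (2), the subset $U := X \setminus \supp(\emptyperp \cA)$ contains $p$, and it is open by Lemma~\ref{lem:support_of_subcategory_is_closed} applied to $\emptyperp \cA$, hence dense since $X$ is irreducible. For every closed point $q \in U$ the same equivalence gives $\OO_q \in \cA$ and thus $q \in \supp(\cA)$, so $\supp(\cA) \supset U$ and by closedness $\supp(\cA) = X$. The main obstacle is producing the direct-sum decomposition $A \iso A_0 \oplus A_1$ in the step (3) $\Rightarrow$ (1): such splittings from set-theoretically disjoint supports are not automatic in the derived category (cf.~Remark~\ref{rem:lifting_not_unique}), and passing to a suitable infinitesimal thickening of the support is what makes them available.
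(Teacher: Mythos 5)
Your proposal is correct, but it takes a genuinely different route for the central implication from the paper. The paper's proof shows (3) $\Rightarrow$ (2) directly: after reducing to the case $\supp(A) = \{p\}$ via \cite[Lem.~3.9]{huybrechts-fm}, it picks the lowest nonzero cohomology sheaf of $A$, uses Nakayama to produce a morphism $\OO_p[-i] \to A$ nonzero on cohomology, and composes with the morphism $B \to \OO_p[-j]$ furnished by Lemma~\ref{lem:probing_support_object} to contradict semiorthogonality of $B$ and $A$. You instead prove (3) $\Rightarrow$ (1) by reducing to $A_0$ with $\supp(A_0)=\{p\}$ (via Corollary~\ref{cor:objects_lift_to_support} rather than the Huybrechts lemma, which is fine) and then invoking Thomason's classification to conclude $\OO_p$ lies in the thick subcategory generated by $A_0$. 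Your (1) $\Leftrightarrow$ (2) and the ``moreover'' density argument are exactly the ``exercises'' the paper omits, and they are done correctly.

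Two points worth flagging. First, Thomason's theorem classifies thick $\otimes$-ideals, not arbitrary thick subcategories; to use it you need the additional observation that the thick subcategory $\langle A_0 \rangle$ is automatically a $\otimes$-ideal when $\supp(A_0) = \{p\}$ is a single closed point. This holds because $A_0 \otimes L \iso A_0$ for every line bundle $L$ (any line bundle is trivial on the infinitesimal neighborhood of $\{p\}$ to which $A_0$ lifts, so the projection formula applies) and $\Perf(X)$ is generated by line bundles as a thick subcategory. The step is true but not spelled out; the paper's route via Nakayama avoids the heavier machinery entirely. Second, your closing sentence overstates the difficulty of the direct-sum decomposition $A \iso A_0 \oplus A_1$: splitting an object along disjoint closed subsets of its set-theoretical support is standard (\cite[Lem.~3.9]{huybrechts-fm}, which is what the paper cites), and the pointer to Remark~\ref{rem:lifting_not_unique} is off-target, since that remark concerns non-uniqueness of lifts rather than existence of splittings. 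Neither issue invalidates the argument.
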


\begin{proof}
  We will only explain why the third property implies the second and leave the other implications as an exercise to the reader. Assume $A \in \cA$ is an object such that $p \in \supp(A)$ is an isolated point, i.e., the complement $\supp(A) \backslash \{p\}$ is closed. By \cite[Lem.~3.9]{huybrechts-fm} there exists a direct sum decomposition $A \iso A^\prime \oplus A^{\prime \prime}$ such that $\supp(A^\prime) = \{p\}$. Admissible subcategories are closed under direct summands, so by replacing $A$ with $A^\prime$ we may assume that $\supp(A) = \{p\}$ in the statement.
  
  Let $i \in \ZZ$ be the smallest integer such that $\mathcal{H}^{i}(A) \neq 0$. Since the coherent sheaf $\mathcal{H}^{i}(A)$ is supported on a single point $p$, by a variant of Nakayama's lemma there exists a nonzero morphism $\OO_p \to \mathcal{H}^{i}(A)$ from the skyscraper sheaf. The skyscraper sheaf is a simple object, so any nonzero morphism from it is a monomorphism. Since $\mathcal{H}^{i}(A)$ is the leftmost nonzero cohomology sheaf of $A$, there exists a morphism $\mathcal{H}^{i}(A)[-i] \to A$. Denote by~$f$ the composition~$\OO_p [-i] \to \mathcal{H}^{i}(A) [-i] \to A$. Note that the morphism $f\colon \OO_p [-i] \to A$ is nonzero on the level of cohomology sheaves. Now let $B \in \emptyperp \cA$ be any object. Assume that $p \in \supp(B)$ to seek a contradiction. Then by the last claim of Lemma~\ref{lem:probing_support_object} for some shift $j \in \ZZ$ there exists a morphism $B \to \OO_p [-j]$ which is nonzero on the level of cohomology sheaves. Consider the composition
  \[ B \to \OO_p [-j] \xrightarrow{f[-i+j]} A [-i+j]. \]
  By construction this composition $B \to A [-i+j]$ induces a nonzero map $\mathcal{H}^{j}(B) \to \mathcal{H}^{i}(A)$, so it is a nonzero morphism in the derived category. But this contradicts the semiorthogonality of $A$ and $B$. Thus all objects in $\emptyperp \cA$ are supported away from the point $p$, as claimed.
\end{proof}

\begin{corollary}
  \label{cor:sod_has_full_support}
  Let $X$ be a smooth proper variety, and let $\cA \subset \Dbcoh(X)$ be an admissible subcategory. Then either $\supp(\cA) = X$, or $\supp(\emptyperp \cA) = X$.
\end{corollary}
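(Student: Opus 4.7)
The plan is to show the contrapositive of the disjunction: if $\supp(\cA) \neq X$, then $\supp(\emptyperp \cA) = X$. Since $X$ is a smooth proper variety, the category $\Dbcoh(X)$ is saturated, so $\emptyperp \cA$ is itself an admissible subcategory, and we have a genuine semiorthogonal decomposition $\Dbcoh(X) = \langle \cA, \emptyperp \cA \rangle$ to work with.

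First I would pick a closed point $p \in X$ not in $\supp(\cA)$. By definition of the support of a subcategory, this means $p \notin \supp(A)$ for every $A \in \cA$, so by Lemma~\ref{lem:probing_support_object} (applied in either direction) we get $\RHom_X(\OO_p, A) = 0$ for all $A \in \cA$. This says exactly that the skyscraper sheaf $\OO_p$ lies in the right orthogonal $\emptyperp \cA$. Alternatively, one can read this off the projection triangle $B_p \to \OO_p \to A_p \to B_p[1]$ with respect to $\Dbcoh(X) = \langle \cA, \emptyperp \cA \rangle$: since $A_p \in \cA$ and $p \notin \supp(\cA)$, Lemma~\ref{lem:probing_support_of_subcategory} forces $A_p = 0$, and so $\OO_p \iso B_p \in \emptyperp \cA$.

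Once we know $\OO_p \in \emptyperp \cA$, we are in the situation of the first condition of Lemma~\ref{lem:isolated_points_imply_skyscrapers} applied to the admissible subcategory $\emptyperp \cA$, and the final clause of that lemma yields $\supp(\emptyperp \cA) = X$. This completes the proof. There is no real obstacle here; the only thing to keep track of is that $\emptyperp \cA$ is itself admissible in $\Dbcoh(X)$, which uses saturation and was recalled in Section~\ref{sec:basics}, and that Lemma~\ref{lem:isolated_points_imply_skyscrapers} is symmetric enough to be applied to $\emptyperp \cA$ in place of $\cA$.
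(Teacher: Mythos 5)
Your proof is correct and takes essentially the same route as the paper: both arguments boil down to showing that $\OO_p \in \emptyperp\cA$ for a point $p$ outside $\supp(\cA)$, and then using closedness of supports to conclude $\supp(\emptyperp\cA)=X$; you delegate that last step to the ``moreover'' clause of Lemma~\ref{lem:isolated_points_imply_skyscrapers} applied to $\emptyperp\cA$, while the paper spells it out directly via Lemma~\ref{lem:support_of_subcategory_is_closed}, but the underlying density-plus-closedness argument is identical. One small terminological slip: $\emptyperp\cA$ as defined in the paper is the \emph{left} orthogonal, not the right, though your condition $\RHom(\OO_p,A)=0$ is the correct one and the conclusion $\OO_p\in\emptyperp\cA$ stands.
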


\begin{proof}
  If $\supp(\cA) = Z$ is a proper subset of $X$, then by Lemma~\ref{lem:isolated_points_imply_skyscrapers} the orthogonal subcategory $\emptyperp \cA$ contains all skyscraper sheaves on the points of the open set $U = X \backslash Z$. Since $\supp(\emptyperp \cA)$ is closed by Lemma~\ref{lem:support_of_subcategory_is_closed}, it contains the closure of the dense Zariski-open subset~$U \subset X$, i.e., coincides with $X$.
\end{proof}

Finally, we need the following observation:

\begin{lemma}
  \label{lem:skyscraper_projections_are_zero_in_k0}
  Let $X$ be a smooth projective variety, and let $\cA \subset \Dbcoh(X)$ be an admissible subcategory such that $\supp(\cA)$ is not equal to $X$. Let $p \in \supp(\cA)$ be a point and let $A_p \in \cA$ be the (left) projection of the skyscraper sheaf $\OO_{p} \in \Dbcoh(X)$ to the subcategory $\cA$. Then $A_p$ has at least two nonzero cohomology sheaves.
\end{lemma}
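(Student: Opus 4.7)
The plan is to argue by contradiction. Suppose $A_p$ is concentrated in a single cohomological degree, so $A_p \iso \mathcal{F}[n]$ for a nonzero coherent sheaf $\mathcal{F}$ and integer $n$. By Lemma~\ref{lem:probing_support_of_subcategory}, $p \in \supp(\mathcal{F})$, and by Lemma~\ref{lem:isolated_points_imply_skyscrapers} combined with the hypothesis $\supp(\cA) \neq X$, the skyscraper $\OO_p$ does not lie in $\cA$ and $p$ is not isolated in $\supp(\mathcal{F})$; in particular $\dim_p \supp(\mathcal{F}) \geq 1$.

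The key technical input is the canonical adjunction isomorphism $\REnd(A_p) \iso \RHom(\OO_p, A_p)$ coming from the fact that $A_p$ is the left projection of $\OO_p$. After substituting $A_p = \mathcal{F}[n]$ this reads $\Ext^i(\mathcal{F}, \mathcal{F}) \iso \Ext^{i+n}(\OO_p, \mathcal{F})$ for every $i \in \ZZ$. The left side vanishes for $i < 0$ and contains $\mathrm{id}_\mathcal{F}$ in degree zero; the right side is computed stalkwise at $p$ and is concentrated in cohomological degrees $[\delta - n, d - n]$, where $\delta = \mathrm{depth}_{\OO_{X,p}}(\mathcal{F}_p)$ and $d = \dim X$. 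Matching these ranges forces $n = \delta$, and more strongly identifies each global invariant $\Ext^i(\mathcal{F}, \mathcal{F})$ with the purely local invariant $\mathcal{E}xt^{i+n}_{\OO_{X,p}}(\kappa(p), \mathcal{F}_p)$.

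The contradiction should come from the tension between these equalities and the positive-dimensional support of $\mathcal{F}$ at $p$: the derived endomorphisms of $\mathcal{F}$ are forced to be entirely local at $p$, even though $\supp(\mathcal{F})$ extends in positive dimension away from $p$. In the case $n = 0$, the identity $\mathrm{id}_\mathcal{F}$ corresponds under the adjunction to the unit inclusion $\alpha\colon \OO_p \monoarrow \mathcal{F}$, and my plan is to promote $\alpha$ to a direct summand embedding; this would force $\OO_p$ to be a direct summand of $A_p \in \cA$, placing $\OO_p$ itself in $\cA$ and contradicting Lemma~\ref{lem:isolated_points_imply_skyscrapers}. In the case $n > 0$, I would use Serre duality to rewrite the top-degree equality $\Ext^{d-n}(\mathcal{F}, \mathcal{F}) \iso \Ext^d(\OO_p, \mathcal{F})$ as a comparison between $\Hom(\mathcal{F}, \mathcal{F} \otimes \omega_X)$ and a local residue-type quantity at $p$, and exploit the positive-dimensionality of $\supp(\mathcal{F})$ to obstruct the equality.

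The main obstacle I expect is the direct-summand extraction in the $n = 0$ case: a skyscraper inclusion at an embedded (non-isolated) associated point of a sheaf does not in general split. The argument must exploit the specifically adjunction-theoretic origin of $\alpha$---that it corresponds to $\mathrm{id}_\mathcal{F}$ under the isomorphism rather than to an arbitrary nonzero socle element---in order to produce the needed retraction.
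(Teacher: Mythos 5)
Your adjunction observation $\REnd(A_p) \iso \RHom(\OO_p, A_p)$, coming from $A_p = \cA_L(\OO_p)$ with $\cA_L$ left adjoint to the inclusion, is correct and does impose real constraints on $A_p$, so the idea has some traction. But as you yourself acknowledge, the argument is not actually completed: in the case $n=0$ you need the monomorphism $\alpha\colon \OO_p \hookrightarrow \mathcal{F}$ to split, and there is no general reason for a skyscraper at a non-isolated associated point to be a direct summand. The adjunction-theoretic origin of $\alpha$ does not obviously provide the retraction. The triangle $\OO_p \to A_p \to Q$ does yield $\RHom(Q, A_p)=0$ (since the middle arrow in $\RHom(Q,A_p)\to\REnd(A_p)\to\RHom(\OO_p,A_p)$ is the adjunction isomorphism), which is a strong statement, but turning it into a contradiction requires more than stalkwise reasoning at the generic point of $\supp\mathcal{F}$: the vanishing of the \emph{global} $\RHom$ complex does not control the \emph{local} $\intRHom$ sheaf, so the tempting argument that $Q$ and $A_p$ agree generically on $\supp\mathcal{F}$ does not immediately apply. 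For $n>0$ the plan is even more speculative. In short: the approach identifies a genuine tension but never closes it.

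The paper's proof is completely different and substantially simpler. It shows that $[A_p]=0$ in $K_0(X)$: using Koszul complexes attached to two transverse $n$-tuples of sections of a suitable ample bundle, the class $[\OO_p]$ is expressed as a $\ZZ$-linear combination of classes $[\OO_q]$ with $q\notin\supp(\cA)$; since $\cA_L$ acts on $K_0(X)$ and kills every such $\OO_q$ (Lemma~\ref{lem:isolated_points_imply_skyscrapers}), we get $[A_p]=0$. Since $A_p\neq 0$ (Lemma~\ref{lem:probing_support_of_subcategory}), and a complex with a single nonzero cohomology sheaf has nonzero $K_0$-class, $A_p$ must have at least two nonzero cohomology sheaves. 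This argument sidesteps all the depth and local-duality machinery and gives the result uniformly, with no case split on $n$. If you want to salvage your approach, the $K_0$ observation is worth absorbing first: it tells you that $\chi(\mathcal{F},\mathcal{F})=0$ in your setting, which might combine usefully with your Serre-duality comparison, but as written the proposal has a genuine gap.
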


\begin{proof}
  By Lemma~\ref{lem:probing_support_of_subcategory} the object $A_p$ is not zero, so it has at least one nonzero cohomology sheaf. Since any nonzero coherent sheaf has a nonzero class in $K_0(X)$, it is enough to prove that the class
  \[ [ A_p ] = \sum_{i=-\infinity}^{\infinity} (-1)^{i} [ \mathcal{H}^{i}(A_p) ] \]
  of the object $A_p$ in $K_0(X)$ is zero. We do this by comparing the class of $A_p$ in $K_0(X)$ with the classes of the projections of skyscraper sheaves at the points in the complement $X \backslash \supp(\cA)$. The key observation, proved below, is that the class $[ \OO_{p} ] \in K_0(X)$ lies in the abelian group generated by the classes $[ \OO_{q} ] \in K_0(X)$ for points $q \in X \backslash \supp(\cA)$.
  
  Denote the dimension of $X$ by $n$. Let $\OO_{X}(H)$ be an ample line bundle. For a collection 
  \[
    s_1, \dots, s_{n} \in \Gamma(X, \OO_{X}(H))
  \]
  of $n$ global sections of $\OO_{X}(H)$, define the Koszul complex
  \[ \mathrm{Kosz}(s_1, \dots, s_{n}) := [ \Lambda^{n}(\OO_{X}(-H)^{\oplus n}) \to \dots \to \Lambda^{2}(\OO_{X}(-H)^{\oplus n}) \to \OO_{X}(-H)^{\oplus n} \to \OO_{X} ] \]
  by setting the differentials to be substitutions of the global section 
  \[
    (s_1, \dots, s_n) \in \Gamma(X, \OO_{X}(H)^{\oplus n}).
  \]
  A standard property of the Koszul complex is that when the common zero locus of the~$n$ sections~$s_1, \dots, s_n$ has dimension zero (in other words, the section $(s_1, \dots, s_n) \in \Gamma(X, \OO_{X}(H)^{\oplus n})$ is regular), the complex is a locally free resolution for the coherent sheaf 
  \[
    \mathrm{coker} (\OO_{X}(-H)^{\oplus n} \to \OO_{X}).
  \]
  Thus the class
  \[ [ \mathrm{coker} ((s_1, \dots, s_n)\colon \OO_{X}(-H)^{\oplus n} \to \OO_{X}) ] = [ \mathrm{Kosz}(s_1, \dots, s_n) ] = \sum_{i=0}^{n} (-1)^{i} [ \Lambda^{i}(\OO_{X}(-H)^{\oplus n}) ] \]
  of the cokernel sheaf in $K_0(X)$ does not depend on the choice of the sections $s_1, \dots, s_n$ as long as the common zero locus is zero-dimensional.

  After possibly replacing $\OO_{X}(H)$ with a sufficiently high power, we can find two collections of $n$ global sections of $\OO_{X}(H)$:
  \begin{itemize}
  \item a collection $s_1, \dots, s_n \in \Gamma(X, \OO_{X}(H))$ of general sections; the common zero locus is a finite collection of points in $X \backslash \supp(\cA)$;
  \item a collection $t_1, \dots, t_n \in \Gamma(X, \OO_{X}(H))$ such that the corresponding divisors 
  \[
    Z(t_1), \dots, Z(t_n) \subset X
  \] intersect transversely at $p \in \supp(\cA)$ with finitely many other intersection points, all transverse, in the complement $X \backslash \supp(\cA)$.
  \end{itemize}

  The first collection induces an isomorphism
  \[ \mathrm{coker} ((s_1, \dots, s_n)\colon \OO_{X}(-H)^{\oplus n} \to \OO_{X}) \iso \OO_{q_1} \oplus \dots \oplus \OO_{q_m}, \]
  where $q_1, \dots, q_m$ is a finite collection of points in $X \backslash \supp(\cA)$. The second collection induces an isomorphism
  \[ \mathrm{coker} ((t_1, \dots, t_n)\colon \OO_{X}(-H)^{\oplus n} \to \OO_{X}) \iso \OO_{p} \oplus \OO_{q^\prime_1} \oplus \dots \oplus \OO_{q^\prime_{r}}, \]  
  where the points $q^\prime_1, \dots, q^\prime_{r}$ lie in the complement $X \backslash \supp(\cA)$. Since both those direct sums have the same class in $K_0(X)$ by the argument above, we obtain the equality
    \begin{equation}
      \label{eq:skyscraper_via_generic_skyscrapers}
      [ \OO_{p} ] = (\sum_{i=1}^{m} [ \OO_{q_i} ]) - (\sum_{i = 1}^{r} [ \OO_{q^\prime_{i}} ])
    \end{equation}
  of classes in $K_0(X)$. Let $\cA_{L}\colon \Dbcoh(X) \to \cA$ be the (left) projection functor. Since the action of $\cA_{L}$ on $K_0(X)$ is well-defined, the formula~\eqref{eq:skyscraper_via_generic_skyscrapers} shows that
  \[ [ A_p ] =^{\mathrm{def}} [ \cA_{L}(\OO_{p}) ] = (\sum_{i=1}^{m} [ \cA_{L}(\OO_{q_i}) ]) - (\sum_{i = 1}^{r} [ \cA_{L}(\OO_{q^\prime_{i}}) ]). \]
  For any point $q \in X \backslash \supp(\cA)$ the projection $\cA_{L}(\OO_{q}) \in \cA$ of the skyscraper sheaf $\OO_q$ at the point $q$ is zero by Lemma~\ref{lem:isolated_points_imply_skyscrapers}. In particular, the classes $[ \cA_{L}(\OO_{q_i}) ]$ and $[ \cA_{L}(\OO_{q^\prime_{i}}) ]$ in $K_0(X)$ are zero for any $i$. Hence $[ A_p ]$ is the zero element in $K_0(X)$, which is exactly what we wanted to prove.
\end{proof}

\section{Infinitely extendable line bundles}
\label{sec:picard}

If $X$ is a scheme and $L$ is an invertible sheaf, tensor multiplication by $L$ is an autoequivalence of the derived category $\Dbcoh(X)$. Autoequivalences of this type are very important for the rest of this paper; however, we need to consider algebraic families of these autoequivalences as the line bundle $L$ varies. To do this properly, in this section we briefly recall the notion of the Picard scheme and some of its basic properties (see \cite{kleiman-picard} or \cite[Ch.~9.2]{neron-models} for a proper exposition). Then we introduce the notion of an \textit{infinitely extendable} line bundle on a closed subscheme of a scheme, which will be very important in the remainder of the paper, and discuss some properties of infinitely extendable line bundles.

\begin{definition}
  \label{def:relative_picard}
  Let $X$ be a scheme over a field $\kk$. Its \textit{Picard group} $\Pic(X)$ is the abelian group of invertible sheaves on $X$ modulo isomorphism. The \textit{relative Picard functor}~$\Pic_{X / \kk}(-)\colon \mathrm{Sch} / \kk \to \mathrm{Ab}$ of $X$ is defined as the quotient
  \[ \Pic_{X / \kk}(Y) := \Pic(X \times Y) / \Pic(Y). \]
\end{definition}

The relative Picard functor is not always representable by a $\kk$-scheme, but its sheafification in the fppf topology is, at least when $X$ is proper. This level of generality is necessary for our purposes since we will usually work with line bundles on non-reduced subschemes.

\begin{theorem}[{\cite[(II.15)]{murre-representability} or \cite[Cor.~9.4.18.3]{kleiman-picard}}]
  \label{thm:picard_exists}
  Let $X$ be a proper scheme over a field~$\kk$. Then there exists a group scheme $\Pic(X)$ which represents the fppf-sheafification of the relative Picard functor. Moreover, $\Pic(X)$ is a union of open quasi-projective subschemes.
\end{theorem}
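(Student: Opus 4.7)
The plan is to recall the classical Grothendieck--Murre strategy, as carried out in the cited references \cite{kleiman-picard} and \cite{murre-representability}. The obstruction to representability of the naive presheaf $\Pic_{X/\kk}$ is that a line bundle on $X \times T$ may become trivial after pulling back along an fppf cover $T' \to T$ without being trivial on $X \times T$ to begin with; passing to the fppf sheafification removes exactly this obstruction, and the content of the theorem is that the resulting sheaf is represented by a scheme which moreover is locally quasi-projective.

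For the construction I would first reduce to the case where $X$ is projective via Chow's lemma, comparing the Picard functors of $X$ and of a projective birational modification $X' \to X$ using the known behavior of $\Pic$ under proper morphisms with cohomologically trivial generic fibers. For $X$ projective with a very ample bundle $\OO_{X}(1)$, decompose the sheafified functor by Hilbert polynomial, writing it as a disjoint union $\bigsqcup_{P} \Pic^{P}_{X/\kk}$, and represent each piece separately.

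The key tool is the Hilbert scheme: for each $P$, every line bundle $L$ with Hilbert polynomial $P$ becomes globally generated with vanishing higher cohomology after twisting by $\OO_{X}(n)$ for a uniform $n \gg 0$, hence is of the form $\OO_{X}(D)(-nH)$ for an effective divisor $D$ in a bounded flat family. This produces a morphism from an open subscheme of the Hilbert scheme of relative divisors to $\Pic^{P}_{X/\kk}$. For $n$ sufficiently large, cohomology and base change implies that this morphism is Zariski-locally a projective bundle whose fibers are the complete linear systems $|L(nH)|$; since projective bundles are fppf locally trivial, one descends the scheme structure from the Hilbert scheme to obtain a quasi-projective scheme representing $\Pic^{P}_{X/\kk}$.

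The main technical obstacle is the interplay of boundedness with cohomology and base change: one needs uniform Castelnuovo--Mumford regularity estimates, applied to the universal twisted line bundle, to guarantee that the Abel map genuinely is a projective bundle over a suitable base after a uniform twist. Once each $\Pic^{P}_{X/\kk}$ is constructed, their disjoint union yields the group scheme $\Pic(X)$ as a union of open quasi-projective subschemes, with the group structure inherited from tensor product of line bundles on the universal family. Since these verifications are entirely classical and the cited references carry them out in full, I would treat the theorem as a black box in the rest of the paper.
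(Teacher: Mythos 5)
The paper does not prove Theorem~\ref{thm:picard_exists}; it merely cites Murre \cite{murre-representability} and Kleiman \cite{kleiman-picard} and treats the result as a black box, which your final sentence also proposes. So there is no ``paper's own proof'' to match against; I'm evaluating your sketch on its own terms.

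Your sketch of the \emph{projective} case is the standard FGA construction and is essentially correct: filter by Hilbert polynomial, uniformly twist until global generation and cohomology vanishing hold, realize each piece as a quotient of an open locus in a Hilbert scheme of divisors by a projective-bundle equivalence relation, and descend. The part that does not hold up is the opening move of ``reduce to the projective case via Chow's lemma.'' This is not how either cited source handles proper but non-projective $X$, and there is a real obstruction: if $X' \to X$ is a Chow alteration, the induced morphism $\Pic_{X/\kk} \to \Pic_{X'/\kk}$ is neither injective nor surjective in general, and its kernel and cokernel are controlled by $f_*\OO_{X'}/\OO_X$ and $R^1 f_*\OO_{X'}$, which need not vanish for a birational proper morphism of singular or non-reduced schemes. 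Untangling this is not a matter of ``known behavior under proper morphisms with cohomologically trivial generic fibers'' --- those cohomological triviality hypotheses simply need not hold. The actual proof in the proper case, due to Murre (building on Oort), proceeds by an entirely different route: one verifies that the fppf-sheafified Picard functor satisfies a list of axioms (finiteness, pro-representability of completions, effectivity of formal objects, openness of versality) and then invokes a general representability criterion for unramified group functors over a field. The ``union of open quasi-projective subschemes'' statement then comes from a separate theorem that any group scheme locally of finite type over a field has this property, rather than being built into the Hilbert-scheme construction as in the projective case. If you wanted to genuinely reprove the theorem rather than cite it, you would need to set up that unramified-functor machinery; the Chow's lemma reduction as written would fail.
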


We call this representing scheme \textit{the Picard scheme}.

A universal line bundle on $X \times \Pic(X)$ exists when the relative Picard functor is already an fppf-sheaf, which is not always true. But by definition of the fppf-sheafification it exists at least fppf-locally:

\begin{lemma}
  \label{lem:poincare_bundle_fppf}
  Let $X$ be a proper scheme over an algebraically closed field $\kk$. For any line bundle~$L$ on~$X$ there exists a scheme $V$ with an fppf morphism $f\colon V \to \Pic(X)$ such that the image of $f$ contains the $\kk$-point $L \in \Pic(X)(\kk)$ and there exists a line bundle $\mathcal{L}$ on the product~$X \times V$ such that for any $\kk$-point $v \in V(\kk)$ the restriction of $\mathcal{L}$ to $X \times \{v\}$ is the line bundle $f(v) \in \Pic(X)(\kk)$.
\end{lemma}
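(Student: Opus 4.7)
The plan is to deduce the statement directly from the defining universal property of the Picard scheme. By Theorem~\ref{thm:picard_exists}, $\Pic(X)$ represents the fppf sheafification $(\Pic_{X/\kk})^{\#}$ of the relative Picard functor. Under Yoneda, the identity morphism $\mathrm{id}_{\Pic(X)}$ corresponds to a tautological section
\[
\xi \in \Pic(X)(\Pic(X)) = (\Pic_{X/\kk})^{\#}(\Pic(X)).
\]
Unwinding the plus construction, a section of the sheafification is, by definition, represented locally in the fppf topology by sections of the presheaf $\Pic_{X/\kk}(-) = \Pic(X \times -)/\Pic(-)$. So applied to $\xi$, there exists an fppf cover $\{V_i \to \Pic(X)\}_{i \in I}$ together with, for each $i$, a line bundle $\mathcal{L}_i$ on $X \times V_i$ whose class in $\Pic(X \times V_i)/\Pic(V_i)$ represents $\xi|_{V_i}$.

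Next, since fppf morphisms are surjective on the underlying topological spaces, and $L \in \Pic(X)$ is a $\kk$-point, some member $V_{i_0}$ of the cover contains a preimage of $L$. Set $V := V_{i_0}$, let $f\colon V \to \Pic(X)$ be the structure map of the cover, and let $\mathcal{L} := \mathcal{L}_{i_0}$. By construction, $f$ is an fppf morphism whose image contains $L$.

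It remains to verify the fiberwise compatibility at $\kk$-points $v \in V(\kk)$. The composition $f \circ v\colon \Spec\kk \to \Pic(X)$ corresponds under Yoneda to the $\kk$-point $f(v) \in \Pic(X)(\kk)$. Pulling the tautological section $\xi$ back along $f \circ v$ therefore yields $f(v)$, viewed as an element of $(\Pic_{X/\kk})^{\#}(\Spec\kk)$. On the other hand, by naturality the same pullback equals the image of $\mathcal{L}|_{X \times \{v\}} \in \Pic(X \times \Spec\kk) = \Pic(X)$ under the canonical map $\Pic_{X/\kk}(\Spec\kk) \to (\Pic_{X/\kk})^{\#}(\Spec\kk)$ (the quotient by $\Pic(\Spec\kk)$ is trivial since $\Pic(\Spec\kk) = 0$). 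Since $\kk$ is algebraically closed, every fppf cover of $\Spec\kk$ admits a $\kk$-point, so this canonical map is a bijection. Chasing the identifications, $\mathcal{L}|_{X \times \{v\}}$ and $f(v)$ represent the same element of $\Pic(X)$, which is the desired compatibility.

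The only subtle point is the identification $(\Pic_{X/\kk})^{\#}(\Spec\kk) = \Pic_{X/\kk}(\Spec\kk) = \Pic(X)$, which relies both on $\Pic(\Spec\kk) = 0$ and on the algebraic closedness of $\kk$ (ensuring fppf sheafification is vacuous over the point). Everything else is a mechanical application of Yoneda's lemma together with the defining property of $\Pic(X)$; the ambiguity of $\mathcal{L}$ up to tensoring with a pullback from $V$ is immaterial since such a pullback restricts trivially on each fiber $X \times \{v\}$.
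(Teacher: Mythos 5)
The paper states this lemma without proof, treating it as an immediate consequence of the phrase ``by definition of the fppf-sheafification'' in the preceding paragraph; your argument correctly supplies that omitted justification, using precisely the intended route (Yoneda applied to the representing scheme, local representability of sections of a sheafification, and the fact that presheaf and fppf-sheafification agree on $\Spec \kk$ for $\kk$ algebraically closed since $\Pic(\Spec\kk)=0$). The argument is correct and matches the paper's approach.
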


The universal line bundle on the whole product $\Pic^0(X) \times X$ is known to exist when $X$ is ``cohomologically flat in dimension zero'', i.e., when for any scheme $T$ over $\kk$ the morphism
\[
  \OO_T \to \pi_{X \times T \to T *}(\OO_{X \times T})
\]
is an isomorphism (see, e.g., \cite[Th.~9.2.5]{kleiman-picard}). This condition, of course, does not have to hold for the non-reduced scheme $X$ since $H^0(\OO_{X})$ can be larger than $\kk$.

Below we work with line bundles on a closed subscheme $Z$ of some proper scheme $X$. Thanks to the projection formula, line bundles restricted from $X$, or even from a formal neighborhood of $Z$ in $X$, have some special properties. We introduce the following definition which will often be used below:

\begin{definition}
  \label{def:infinitely_extendable_line_bundles}
  Let $X$ be a proper scheme over a field $\kk$, and let $i\colon Z \monoarrow X$ be a closed subscheme defined by the ideal sheaf $\mathcal{I} \subset \OO_X$. We say that a line bundle $L$ on $Z$ is \textit{infinitely extendable} if for any $n > 0$ there exists a line bundle $L_n$ on the closed subscheme $Z_n := \underline{\mathrm{Spec}} \OO_{X} / \mathcal{I}^n$ such that $L \iso (L_n)|_Z$. We define the subgroup scheme $\Pic_{\mathrm{i.e.}}(Z / X) \subset \Pic(Z)$ of infinitely extendable line bundles to be the intersection of the images of restriction maps
  \[ \Pic(Z_n) \to \Pic(Z) \]
  over all $n > 0$, and we set $\Pic^0_{\mathrm{i.e.}}(Z / X)$ to be the connected component of the identity in the scheme~$\Pic_{\mathrm{i.e.}}(Z / X)$.
\end{definition}

Note that the image of each morphism $\Pic(Z_n) \to \Pic(Z)$ is a closed subscheme in~$\Pic(Z)$ by~\cite[Ex.~VI$_B$, Prop.~1.2]{sga3-1} since the Picard schemes are locally of finite type by Theorem~\ref{thm:picard_exists} and the morphism is quasi-compact because it is affine as a restriction map along an infinitesimal extension. Since the intersection of any number of closed subschemes is a closed subscheme, it makes sense to speak of the connected component of the identity in~$\Pic_{\mathrm{i.e.}}(Z / X)$.

Picard groups of infinitesimal thickenings are quite well-understood. In particular, for curves any line bundle is infinitely extendable.

\begin{lemma}[{\cite[Ch.~9.2, Prop.~5]{neron-models}}]
  \label{lem:infinitely_extendable_on_curves}
  Let $X$ be a proper scheme over a field~$\kk$, and let~$i\colon Z \monoarrow X$ be a closed subscheme. Suppose that all irreducible components of $Z_{\mathrm{red}}$ have dimension at most $1$. Then $\Pic_{\mathrm{i.e.}}(Z / X) = \Pic(Z)$.
\end{lemma}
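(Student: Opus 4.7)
The plan is to show that for each $n \geq 1$ the restriction map $\Pic(Z_{n+1}) \to \Pic(Z_n)$ is surjective, and then conclude by induction that every line bundle on $Z = Z_1$ lifts to every infinitesimal thickening $Z_n$ inside $X$, which is exactly the statement $\Pic_{\mathrm{i.e.}}(Z/X) = \Pic(Z)$.

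To prove surjectivity of $\Pic(Z_{n+1}) \to \Pic(Z_n)$, I would use the standard short exact sequence of sheaves of abelian groups on the common underlying topological space $|Z| = |Z_n| = |Z_{n+1}|$:
\[ 0 \to \mathcal{I}^{n}/\mathcal{I}^{n+1} \xrightarrow{1 + (-)} \OO_{Z_{n+1}}^{\times} \to \OO_{Z_{n}}^{\times} \to 1. \]
The first map is well-defined and additive because $(\mathcal{I}^n/\mathcal{I}^{n+1})^2 = 0$ inside $\OO_{Z_{n+1}}$, so that $(1+x)(1+y) = 1 + (x+y)$ for $x, y \in \mathcal{I}^n/\mathcal{I}^{n+1}$. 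Taking the associated long exact sequence in sheaf cohomology gives
\[ \Pic(Z_{n+1}) \to \Pic(Z_n) \to H^{2}(Z, \mathcal{I}^{n}/\mathcal{I}^{n+1}), \]
so the obstruction to lifting a line bundle from $Z_n$ to $Z_{n+1}$ lives in $H^{2}(Z, \mathcal{I}^n/\mathcal{I}^{n+1})$.

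The hypothesis that every irreducible component of $Z_{\mathrm{red}}$ has dimension at most $1$ means that the underlying topological space of $Z$ has Krull dimension at most one. By Grothendieck's vanishing theorem, $H^{i}(Z, \mathcal{F}) = 0$ for every abelian sheaf $\mathcal{F}$ and every $i \geq 2$. In particular the obstruction groups $H^{2}(Z, \mathcal{I}^n/\mathcal{I}^{n+1})$ all vanish, so each restriction map $\Pic(Z_{n+1}) \to \Pic(Z_n)$ is surjective.

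Finally, given any line bundle $L$ on $Z$, a straightforward induction on $n$ using the surjectivity just proved produces line bundles $L_n \in \Pic(Z_n)$ with $L_{n+1}|_{Z_n} \iso L_n$ and $L_1 \iso L$. This verifies that $L$ lies in the image of $\Pic(Z_n) \to \Pic(Z)$ for every $n$, so $L \in \Pic_{\mathrm{i.e.}}(Z/X)$. There is no real obstacle in this argument once the exponential-type short exact sequence is set up and Grothendieck vanishing is invoked; the role of $X$ is only to define the thickenings $Z_n$ via $\mathcal{I}^n \subset \OO_X$.
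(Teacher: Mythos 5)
Your proof is correct and follows essentially the same approach as the paper: identify the kernel of $\OO_{Z_{n+1}}^{\times} \to \OO_{Z_n}^{\times}$ with the coherent (hence additive) sheaf $\mathcal{I}^n/\mathcal{I}^{n+1}$ via the square-zero exponential, push through the cohomology long exact sequence, and kill the obstruction in $H^2$ by Grothendieck vanishing on the $\leq 1$-dimensional space $|Z|$. The only cosmetic difference is that the paper iterates by repeatedly squaring the ideal (producing thickenings indexed by powers of two and invoking cofinality), whereas you go one step $Z_n \to Z_{n+1}$ at a time, which is marginally more direct; like the paper, you work at the level of points rather than addressing the scheme-theoretic equality, which the paper also treats as routine.
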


\begin{proof}
  Since $\Pic(Z)$ is a scheme locally of finite type by Theorem~\ref{thm:picard_exists}, it is enough to prove that the closed subscheme $\Pic_{\mathrm{i.e.}}(Z / X) \subset \Pic(Z)$ contains all closed points. For simplicity of notation we only prove that $\Pic_{\mathrm{i.e.}}(Z / X)(\kk) = \Pic(Z)(\kk)$, finite extensions of $\kk$ are handled similarly.
  
  Let $\widetilde{Z} \subset X$ be a closed subscheme defined by the ideal sheaf $\mathcal{I}^2$, where $\mathcal{I}$ is the ideal sheaf of $Z \subset X$. Denote by $\mathcal{J}$ the kernel of the morphism $\OO_{\widetilde{Z}}^* \to \OO_{Z}^*$ of sheaves of abelian groups. Then the cohomology long exact sequence \cite[Tag~0C6R]{stacks-project} includes the fragment
  \[ \Pic(\widetilde{Z})(\kk) \to \Pic(Z)(\kk) \to H^2(Z, \mathcal{J}). \]
  Since $\widetilde{Z}$ is a square-zero extension, the sheaf $\mathcal{J}$ of abelian groups happens to be isomorphic to the coherent sheaf $\mathcal{I} / \mathcal{I}^2$. Thus when $\dim(Z) \leq 1$, the group $H^2(Z, \mathcal{J})$ vanishes, so any line bundle on $Z$ can be extended to $\widetilde{Z}$. Applying the same argument to the closed subscheme $\widetilde{Z}$ and iterating shows that line bundles on $Z$ extend to arbitrary infinitesimal thickening.
\end{proof}

When only some of the irreducible components of $Z_{\mathrm{red}}$ have dimension one, the cohomology group $H^2(Z, \mathcal{J})$ in the proof above may not vanish, but there are still many infinitely extendable line bundles. Below we need the following statement:

\begin{lemma}
  \label{lem:line_bundles_on_component_curves}
  Let $X$ be a proper scheme over a field $\kk$, and let $i\colon Z \monoarrow X$ be a closed subscheme defined by the ideal sheaf $\mathcal{I} \subset \OO_X$. Let $C \subset X$ be a closed subscheme such that:
  \begin{itemize}
  \item $\dim C_{\mathrm{red}} = 1$;
  \item each irreducible component of $C_{\mathrm{red}}$ is an irreducible component of $Z$;
  \item $C$ is contained in the subscheme $Z$, i.e., the ideal sheaf $\mathcal{I}_C$ of $C$ contains $\mathcal{I}$.
  \end{itemize}
  Then the composition $\Pic_{\mathrm{i.e.}}(Z / X)(\kk) \monoarrow \Pic(Z)(\kk) \to \Pic(C)(\kk)$ is a surjective map.
\end{lemma}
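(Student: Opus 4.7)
Fix a line bundle $L \in \Pic(C)(\kk)$. Since $C \subset Z$ implies $\mathcal{I}_Z \subset \mathcal{I}_C$ and hence $\mathcal{I}_Z^n \subset \mathcal{I}_C^n$, we obtain closed embeddings $C_n \hookrightarrow Z_n$ for every $n \geq 1$, and the restriction $\Pic(Z_n) \to \Pic(C)$ factors as $\Pic(Z_n) \to \Pic(C_n) \to \Pic(C)$. By Lemma~\ref{lem:infinitely_extendable_on_curves} applied to $C \subset X$, the map $\Pic(C_m) \to \Pic(C)$ is surjective for every $m$, so we may choose $L_m \in \Pic(C_m)$ lifting $L$. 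The plan is then to construct, for each $n \geq 1$, a line bundle $M_n \in \Pic(Z_n)$ with $M_n|_C = L$, and then to upgrade the resulting family to a single element of $\Pic_{\mathrm{i.e.}}(Z/X)$.

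For the construction of $M_n$, I would split $Z_n$ scheme-theoretically via a primary decomposition of $\mathcal{I}_Z^n$ in $\OO_X$: write $\mathcal{I}_Z^n = \mathcal{I}^{(1)} \cap \mathcal{I}^{(2)}$, where $\mathcal{I}^{(1)}$ is the intersection of those primary components whose radical is the ideal of some irreducible component of $C_{\mathrm{red}}$, and $\mathcal{I}^{(2)}$ is the intersection of the remaining primary components. Set $A := V(\mathcal{I}^{(1)})$, $B := V(\mathcal{I}^{(2)})$, and $A' := A \cup C$ (scheme-theoretic union, defined by $\mathcal{I}^{(1)} \cap \mathcal{I}_C$). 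A direct check using $\mathcal{I}_Z^n \subset \mathcal{I}_C$ gives $\mathcal{I}_{A'} \cap \mathcal{I}_B = (\mathcal{I}^{(1)} \cap \mathcal{I}_C) \cap \mathcal{I}^{(2)} = \mathcal{I}_Z^n$, so $Z_n = A' \cup B$ scheme-theoretically. Since $\sqrt{\mathcal{I}^{(1)}} = \mathcal{I}_{C_{\mathrm{red}}}$, the Noetherian Nullstellensatz yields $A \subset C_m$ and hence $A' \subset C_m$ for some $m$. The hypothesis that every irreducible component of $C_{\mathrm{red}}$ is an irreducible component of $Z$ forces $A'_{\mathrm{red}} \cap B_{\mathrm{red}}$ to have dimension zero, so $A' \cap B$ is a finite disjoint union of Artinian local schemes and therefore $\Pic(A' \cap B) = 0$. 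The Mayer--Vietoris exact sequence
\[
\Pic(Z_n) \to \Pic(A') \oplus \Pic(B) \to \Pic(A' \cap B)
\]
then lifts the pair $(L_m|_{A'}, \OO_B)$ to some $M_n \in \Pic(Z_n)$; the inclusion $C \subset A'$ gives $M_n|_C = L_m|_C = L$, as desired.

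Finally, to obtain a single $M \in \Pic_{\mathrm{i.e.}}(Z/X)(\kk)$ from the family $\{M_n|_Z\}$, let $F \subset \Pic(Z)$ be the closed subscheme fiber of $\Pic(Z) \to \Pic(C)$ over $L$, and set $G_n := \mathrm{image}(\Pic(Z_n) \to \Pic(Z))$; these are closed subschemes forming a decreasing chain with $\bigcap_n G_n = \Pic_{\mathrm{i.e.}}(Z/X)$. The previous paragraph shows $F \cap G_n \neq \emptyset$ for every $n$. Using that $\Pic(Z)$ is a union of quasi-projective (hence Noetherian) open subschemes by Theorem~\ref{thm:picard_exists}, the decreasing chain of closed subsets $F \cap G_n$ stabilizes on any fixed such open into which infinitely many classes $M_n|_Z$ fall, giving a $\kk$-point of $F \cap \Pic_{\mathrm{i.e.}}(Z/X)$. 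The main obstacle is the careful verification of the scheme-theoretic identities coming from the primary decomposition (especially $Z_n = A' \cup B$ and the dimension bound on $A' \cap B$), together with the final Noetherian stabilization step, which is delicate because $\Pic(Z)$ is only locally of finite type and one must ensure the family $\{M_n|_Z\}$ cannot escape across infinitely many components of $\Pic(Z)$; in practice this can be handled by making compatible choices of $L_m$ and of the primary decomposition as $n$ grows, producing $M_n$'s that form an inverse system in $\varprojlim_n \Pic(Z_n)$.
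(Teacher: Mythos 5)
Your proof takes a genuinely different route from the paper's, although both exploit the same geometric fact: the intersection of $C_{\mathrm{red}}$ with the closure $Z'$ of the remaining components of $Z_{\mathrm{red}}$ is zero-dimensional, so gluing a line bundle near $C$ with the trivial bundle near $Z'$ is unobstructed. The paper implements this concretely by writing a single cocycle: it picks a trivializing open cover of $C$ for a lift $L_n$ on $C_n$ in which each point of $C_{\mathrm{red}} \cap Z'$ lies in only one chart, extends this to a cover of $X$, and then defines a cocycle on $Z$ and on each $Z_n$ that equals the transition functions of $L_n$ along the $C$-side and the identity along the $Z'$-side; this produces a single line bundle $L_Z$ on $Z$ together with an extension to every $Z_n$, and no limiting argument is needed. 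Your Mayer--Vietoris argument via the primary decomposition $Z_n = A' \cup B$, with $A' \subset C_m$ and $\Pic(A' \cap B) = 0$ because $A' \cap B$ is a finite Artinian scheme, is a slicker way to package the same unobstructedness, and the scheme-theoretic identities you assert do check out.

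The one place your proposal is not yet a proof is the final step, which you flag yourself. Producing some $M_n \in \Pic(Z_n)$ restricting to $L$ for every $n$ is not a priori the same as producing a single infinitely extendable line bundle on $Z$ restricting to $L$, because the classes $M_n|_Z$ could in principle lie in infinitely many distinct connected components of $\Pic(Z)$, and then the Noetherian stabilization of the chain $F \cap G_n$ cannot be run inside one quasi-compact open. When $Z$ itself has dimension one this is not a real problem: $M_n|_C = L$ and $M_n|_B \cong \OO_B$ pin down the multidegree of $M_n|_Z$, so by \cite[Ch.~9.2, Cor.~14]{neron-models} all the $M_n|_Z$ lie in a single translate of $\Pic^0(Z)$, a quasi-projective scheme; the chain then stabilizes at $F \cap \Pic_{\mathrm{i.e.}}(Z/X)$, which contains the $\kk$-point $M_N|_Z$ for $N$ large. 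But the lemma is invoked in the proof of Theorem~\ref{thm:rational_curves_only} with $Z = \supp(\cA)$ possibly of dimension greater than one (only $C$ is assumed to be a curve), and there the multidegree trick is unavailable. The clean fix is the one you mention last: build the $M_n$ as a compatible inverse system $M_{n+1}|_{Z_n} \cong M_n$ by choosing the lifts $L_m$ of $L$ along the surjections $\Pic(C_{m+1}) \to \Pic(C_m)$ and the Mayer--Vietoris splittings compatibly in $n$; then $M_1|_Z$ is manifestly infinitely extendable and restricts to $L$, with no stabilization argument required. This is exactly the difficulty that the paper's explicit cocycle construction sidesteps by writing down a single cocycle on $Z$ that extends uniformly.
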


\begin{proof}
  Denote by $Z^\prime$ the Zariski closure of the complement $Z_{\mathrm{red}} \backslash C_{\mathrm{red}}$, and let $p_1, \dots, p_r \in C_{\mathrm{red}}$ be the closed points constituting the intersection~$C_{\mathrm{red}} \cap Z^\prime$. Let $L$ be a line bundle on~$C$. We need to construct a line bundle~$L_Z$ on~$Z$ and show that it extends to the subscheme~$Z_n := \underline{\mathrm{Spec}} \OO_{X} / \mathcal{I}^n$ for any~$n > 0$. We do this by an explicit gluing procedure: conceptually, since $C \cap Z^\prime$ is a finite set of points, we can glue $L$ on $C$ with a trivial line bundle on $Z^\prime$ in an arbitrary way to get $L_Z$, and then recreate the gluing for any infinitesimal thickening of $Z$. When $Z$ is a reduced curve, this is more or less contained in the proof of~\cite[Ch.~9.2, Prop.~10]{neron-models}, though the key input is just the fact that $C \cap Z^\prime$ is zero-dimensional. The details, somewhat obscured by the notation, are below.

  Pick some $n > 0$. By Lemma~\ref{lem:infinitely_extendable_on_curves} there exists a line bundle $L_n$ on $C_n := \underline{\mathrm{Spec}}(\OO_{X} / \mathcal{I}_C^n)$ extending $L$. Let $\mathcal{U} = \{ U_i \}_{i \in [1; k]}$ be a finite Zariski-open cover of $C$ such that:
  \begin{itemize}
  \item for each $U \in \mathcal{U}$ the restriction $L_n|_{U}$ is trivial (the notation makes sense since Zariski-open subsets in $C$ are the same as in $C_n$); and
  \item each point $p_i \in C_{\mathrm{red}} \cap Z^\prime$ is contained in exactly one subset $U \in \mathcal{U}$.
  \end{itemize}
  Closed subsets have the induced topology, so we can pick a collection of Zariski-open subsets~$\mathcal{W} = \{ W_i \}_{i \in [1; k]}$ of $X$ such that~$W_i \cap C = U_i$ for all $i \in [1; k]$. We may and will assume that each~$W_i$ contains the complement $X \backslash C_{\mathrm{red}}$, so that~$\mathcal{W}$ is in fact an open cover of~$X$.

  Since no two distinct subsets $W, W^\prime$ in the cover $\mathcal{W}$ contain a common point $p_i \in C_{\mathrm{red}} \cap Z^\prime$, the intersection $W \cap W^\prime \cap Z_{\mathrm{red}}$ is a disjoint union of an open subset in $C_{\mathrm{red}}$ and an open subset in $Z^\prime$. This shows that we can define a line bundle on $Z_n$ by gluing trivial line bundles on the cover $\mathcal{W} \cap Z_n$ using, as a cocycle, the cocycle corresponding to $L_n|_{\mathcal{U}}$ along $C$ and just the identity function along $Z^\prime$. This construction defines a line bundle $L_{Z_n}$ on each $Z_n$ in a way that they all agree with $L$ along $C \subset Z$, which is what we wanted to show.
\end{proof}

\begin{remark}
  The argument above shows that the condition $\dim(C \cap Z^\prime) = 0$ implies that the set~$\Pic_{\mathrm{i.e.}}(Z / X)(\kk)$ surjects onto $\Pic_{\mathrm{i.e.}}(C / X)(\kk)$ regardless of the dimension of $C$.
\end{remark}

\section{$\Pic^0$-invariance for admissible subcategories}
\label{sec:invariance}

By Kawatani--Okawa invariance theorem \cite[Th.~1.4]{kawatani-okawa} for a proper variety $X$ any admissible subcategory~$\cA \subset \Dbcoh(X)$ is invariant under the action of~$\Pic^0(X)$ on the derived category of~$X$. When all objects in the subcategory $\cA$ are supported on a closed subset~$Z \subset X$, there is a much weaker, but still useful, invariance property of~$\cA$ with respect to line bundles on infinitesimal thickenings of~$Z$. In this section we state and prove it as Theorem~\ref{thm:admissible_invariance}. We also apply it to get a refinement of another Kawatani--Okawa theorem concerning the support of an admissible subcategory, see Proposition~\ref{prop:condition_on_support}.

The proof of our invariance statement is conceptually similar to the one from \cite{kawatani-okawa}, but we have to struggle due to the worse properties of the Picard scheme for non-reduced schemes. The core of the proof is the following observation by Kawatani and Okawa, which, informally, means that admissible subcategories are closed under small deformations of objects. See~\cite{kuznetsov-basechange} for the notion of the base change for admissible subcategories.

\begin{proposition}
  \label{prop:kawatani_okawa_rigidity}
  Let $X$ be a proper scheme, and let $\cA \subset \Perf(X)$ be an admissible subcategory. Let $U$ be a quasi-compact separated scheme, and let $\mathcal{E} \in \Perf(X \times U)$ be an object. Then there exists the largest Zariski-open subset $U^\prime \subset U$ such that the base change of~$\mathcal{E}$ to~$\Perf(X \times U^\prime)$ lies in the subcategory $\cA \boxtimes \Perf(U^\prime)$. Moreover, a closed point $u \in U$ lies in $U^\prime$ if and only if the base change $E_u \in \Perf(X \times \{u\})$ of the object $\mathcal{E}$ along the morphism~$\{u\} \monoarrow U$ lies in $\cA$.
\end{proposition}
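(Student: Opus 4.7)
The plan is to realize $U^\prime$ as the complement of the image in $U$ of the support of the projection of $\mathcal{E}$ onto the orthogonal component of the base-changed semiorthogonal decomposition.

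First, since $\cA$ is an admissible subcategory of $\Perf(X)$, Kuznetsov's base change theorem \cite{kuznetsov-basechange} yields an induced semiorthogonal decomposition
\[ \Perf(X \times U) = \langle (\emptyperp \cA) \boxtimes \Perf(U),\ \cA \boxtimes \Perf(U) \rangle, \]
whose projection functors are compatible with arbitrary base change in $U$: for any morphism $f\colon V \to U$, pullback along $\mathrm{id}_X \times f$ intertwines the projection to $(\emptyperp \cA) \boxtimes \Perf(U)$ with the projection to $(\emptyperp \cA) \boxtimes \Perf(V)$. In particular, an object of $\Perf(X \times U)$ lies in $\cA \boxtimes \Perf(U)$ if and only if its projection to $(\emptyperp \cA) \boxtimes \Perf(U)$ vanishes.

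Let $\mathcal{B} \in \Perf(X \times U)$ denote the projection of $\mathcal{E}$ onto the first summand. By the base change compatibility, for any locally closed $V \subset U$ the pullback $\mathcal{B}|_{X \times V}$ is the analogous projection of $\mathcal{E}|_{X \times V}$, so $\mathcal{E}|_{X \times V} \in \cA \boxtimes \Perf(V)$ if and only if $\mathcal{B}|_{X \times V} = 0$. Now $\mathcal{B}$ is perfect, hence $\supp(\mathcal{B}) \subset X \times U$ is closed; the projection $\pi_U\colon X \times U \to U$ is proper since $X$ is proper, so $\pi_U(\supp(\mathcal{B}))$ is closed in~$U$. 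Define
\[ U^\prime := U \setminus \pi_U(\supp(\mathcal{B})). \]
This is manifestly the largest open subset of $U$ on which $\mathcal{B}$ restricts to zero, and therefore the largest open on which $\mathcal{E}$ restricts into $\cA \boxtimes \Perf$, proving the first assertion.

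For the fiberwise criterion, apply base change along the inclusion $\{u\} \monoarrow U$ of a closed point: $\mathcal{B}|_{X \times \{u\}}$ is identified with the projection of $E_u$ to $\emptyperp \cA$, so $E_u \in \cA$ if and only if $\mathcal{B}|_{X \times \{u\}} = 0$. For a perfect complex, the set-theoretic support is precisely the locus where the derived fiber is nonzero, so this vanishing is equivalent to $u \notin \pi_U(\supp(\mathcal{B}))$, i.e. $u \in U^\prime$. The main subtlety of the proof is to secure the base-change compatibility of the projection functors: this is exactly the content of Kuznetsov's base change theorem, whose hypotheses need to be checked in this $\Perf$-linear setting with $X$ proper. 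Once the kernel description of the projections is in place, the rest reduces to the standard fact that a perfect complex has closed support and that proper morphisms take closed sets to closed sets.
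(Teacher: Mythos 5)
Your proposal is correct in outline, but it takes a genuinely heavier route than the paper, and the heavy ingredient is exactly where you wave your hands. You realize $U'$ as the complement of $\pi_U(\supp\mathcal{B})$, where $\mathcal{B}$ is the projection of $\mathcal{E}$ onto $(\emptyperp\cA)\boxtimes\Perf(U)$; for this you need (i) that the semiorthogonal decomposition $\Perf(X\times U) = \langle \cA\boxtimes\Perf(U), (\emptyperp\cA)\boxtimes\Perf(U)\rangle$ exists, in particular that the projection preserves $\Perf(X\times U)$, and (ii) that the projection commutes with base change, including the non-flat base change to a closed point $\{u\}\monoarrow U$. You attribute both to Kuznetsov's base change theorem, but then explicitly decline to verify its hypotheses, and it is not obvious they hold for $X$ merely proper (not smooth): Kuznetsov's results typically hinge on the projection functor being given by a Fourier--Mukai kernel with finiteness properties that are automatic for smooth proper $X$ but not in general. (One can in fact establish (i) and (ii) by other means --- e.g.\ by a dg-categorical tensor-product argument for (i), and by noting that restriction to $X\times\{u\}$ carries $\cA\boxtimes\Perf(U)$ into $\cA$ and $(\emptyperp\cA)\boxtimes\Perf(U)$ into $\emptyperp\cA$, so the restricted triangle is a projection triangle by uniqueness, for (ii) --- but none of this is in your write-up.)

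The paper, following Kawatani--Okawa, sidesteps the base-changed semiorthogonal decomposition on $X\times U$ almost entirely. It fixes a classical generator $G$ of $\emptyperp\cA$, forms the object
\[
\mathcal{H} \;:=\; \mathrm{R}\pi_{U*}\,\intRHom_{X\times U}\!\bigl(G\boxtimes\OO_U,\;\mathcal{E}\bigr) \;\in\; \Perf(U),
\]
and sets $U' := U\setminus\supp(\mathcal{H})$. Here the perfectness of $\mathcal{H}$ is a classical coherence theorem (pushforward of a perfect complex along a proper flat morphism is perfect), needing no admissibility or Kuznetsov-type input, and the fiberwise claim is immediate from flat base change: the derived fiber $\mathcal{H}_u$ is $\RHom_X(G, E_u)$, which vanishes iff $E_u\in G^{\perp}=\cA$. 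Your $\pi_U(\supp\mathcal{B})$ and the paper's $\supp(\mathcal{H})$ end up being the same closed set, but the paper's packaging keeps the key complex on $U$ rather than on $X\times U$, and replaces ``base change of a projection functor'' by ``base change of a pushforward,'' which is much cheaper. If you want to keep your route, you should at least supply the arguments for (i) and (ii) above rather than only naming the theorem you would cite.
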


\begin{proof}
  This is essentially proved in \cite[Lem.~3.10]{kawatani-okawa}: take a classical generator $G$ of the orthogonal subcategory $\emptyperp \cA \subset \Perf(X)$, consider the internal $\RHom$-object 
  \[
    \intRHom_{X \times U}(G \boxtimes \OO_U, \mathcal{E}) \in \Perf(X \times U)
  \]
  and let $\mathcal{H}$ be its pushforward along the projection $X \times U \to U$. Since $X$ is proper, the object~$\mathcal{H}$ is a perfect complex on~$U$. The complement of the support of $\mathcal{H}$ is a Zariski-open subset $U^\prime$. All the other claims follow by the base change along the flat morphism $X \times U \to U$.
\end{proof}

The properties of Picard scheme discussed in Section~\ref{sec:picard} easily imply the following ``local'' version of Kawatani--Okawa invariance theorem:

\begin{lemma}
  \label{lem:twists_locally_remain_in_admissible}
  Let $X$ be a smooth proper variety, and let $i\colon \widetilde{Z} \monoarrow X$ be a closed subscheme. Let $\cA \subset \Dbcoh(X)$ be an admissible subcategory. Assume that an object $\overline{E} \in \Dbcoh(\widetilde{Z})$ satisfies~$i_*(\overline{E}) \in \cA$. Then there exists a Zariski-open subset~$U \subset \Pic^0(\widetilde{Z})$ containing the closed point~$\OO_{\widetilde{Z}} \in \Pic^0(\widetilde{Z})(\kk)$ such that for any line bundle~$L \in U(\kk)$ the object~$i_*(\overline{E} \otimes L)$ lies in~$\cA$.
\end{lemma}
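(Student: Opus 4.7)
The plan is the standard approach for invariance of admissible subcategories under a group action: spread $i_*(\overline{E})$ into an algebraic family parametrized by a space of line bundles on $\widetilde{Z}$, then apply the Kawatani--Okawa rigidity of Proposition~\ref{prop:kawatani_okawa_rigidity} to extract a Zariski-open locus of parameters over which the family remains in $\cA$. The new wrinkle, compared to the reduced case treated in \cite{kawatani-okawa}, is that since $\widetilde{Z}$ need not be cohomologically flat in dimension zero, there need not exist a universal line bundle directly on $\widetilde{Z} \times \Pic^0(\widetilde{Z})$, and the family can only be built after an fppf base change.

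Concretely, I would first apply Lemma~\ref{lem:poincare_bundle_fppf} to the trivial line bundle $\OO_{\widetilde{Z}}$ to obtain an fppf morphism $f\colon V \to \Pic(\widetilde{Z})$ whose image contains the point $\OO_{\widetilde{Z}}$, together with a universal line bundle $\mathcal{L}$ on $\widetilde{Z} \times V$. After restricting $V$ to $f^{-1}(\Pic^0(\widetilde{Z}))$ and, if necessary, further refining so that $V$ is smooth, the product $X \times V$ is smooth, and the object
\[
  \mathcal{F} := (i \times \mathrm{id}_V)_*\!\bigl(\pi_{\widetilde{Z}}^*\overline{E} \otimes \mathcal{L}\bigr) \in \Dbcoh(X \times V) = \Perf(X \times V)
\]
is automatically perfect. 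Its derived restriction to a $\kk$-point $v \in V$ is $i_*(\overline{E} \otimes f(v))$, and picking any $v_0 \in f^{-1}(\OO_{\widetilde{Z}})(\kk)$ yields $\mathcal{F}_{v_0} \iso i_*(\overline{E}) \in \cA$ by hypothesis.

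Proposition~\ref{prop:kawatani_okawa_rigidity} then supplies a maximal Zariski-open $V^\prime \subset V$ over which $\mathcal{F}$ belongs to $\cA \boxtimes \Perf(V^\prime)$; this $V^\prime$ contains $v_0$ and is in particular non-empty. Because $f$ is flat and locally of finite presentation it is an open map, so $U := f(V^\prime) \subset \Pic^0(\widetilde{Z})$ is a Zariski-open neighborhood of $\OO_{\widetilde{Z}}$. For any $L \in U(\kk)$, the fiber $f^{-1}(L) \cap V^\prime$ is a non-empty scheme locally of finite type over the algebraically closed field $\kk$ and hence admits a $\kk$-point $v$; the defining property of $V^\prime$ now gives $i_*(\overline{E} \otimes L) \iso \mathcal{F}_v \in \cA$, as desired. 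The principal technical obstacle I anticipate is ensuring the perfectness of $\mathcal{F}$ on $X \times V$: since $\overline{E}$ lives only in $\Dbcoh(\widetilde{Z})$ and $\widetilde{Z}$ may have infinite homological dimension, $\mathcal{F}$ is not perfect on a general base, which is precisely why one must refine the fppf cover so as to arrange $V$ smooth. The other mildly delicate points — the openness of the fppf map $f$ and the existence of $\kk$-lifts of $\kk$-points of $U$ along $f$ — are routine once one invokes Nullstellensatz.
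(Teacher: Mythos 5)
Your overall strategy is exactly the paper's: use Lemma~\ref{lem:poincare_bundle_fppf} to produce an fppf cover $f\colon V \to \Pic^0(\widetilde{Z})$ carrying a universal line bundle $\mathcal{L}$, push the twisted family $\pi_{\widetilde{Z}}^*\overline{E}\otimes\mathcal{L}$ forward to $X\times V$, apply Proposition~\ref{prop:kawatani_okawa_rigidity} to cut out a Zariski-open $V'\subset V$ where the family stays in $\cA\boxtimes\Perf(V')$, and then take $U=f(V')$ using that fppf morphisms are open. The endgame with $\kk$-points via Nullstellensatz is fine.

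The gap is in how you justify perfectness of $\mathcal{F}:=(i\times\mathrm{id}_V)_*(\pi_{\widetilde{Z}}^*\overline{E}\otimes\mathcal{L})$. You assert that one can ``further refine so that $V$ is smooth,'' and then conclude $\Dbcoh(X\times V)=\Perf(X\times V)$. But you give no mechanism for refining the fppf cover to a smooth one \emph{while retaining a universal line bundle}, and this is not a formality: the paper explicitly flags, in a footnote at precisely this step, that it is not clear one can choose the fppf map $f\colon V\to\Pic^0(\widetilde{Z})$ with a universal family so that $V$ is smooth, even in characteristic zero. Any naive refinement either destroys flatness/openness (e.g.\ a resolution of singularities of $V$ is proper birational, not fppf, so the final image need not be open in $\Pic^0(\widetilde{Z})$) or does not obviously carry the universal bundle. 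So this step, as written, is a genuine hole.

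The paper sidesteps the issue by proving perfectness of $\mathcal{F}$ directly, without any smoothness hypothesis on $V$. The argument is local: around any closed point $(z,v)$ of the support, $\mathcal{L}$ is trivial on some Zariski-open $W\subset\widetilde{Z}\times V$; choosing $W'\subset X\times V$ with $W'\cap(\widetilde{Z}\times V)=W$, the restriction $\mathcal{F}|_{W'}$ is isomorphic to $\bigl(i_*(\overline{E})\boxtimes\OO_V\bigr)|_{W'}$, which is perfect because $i_*(\overline{E})$ is perfect on the smooth variety $X$. Since perfectness is local, $\mathcal{F}\in\Perf(X\times V)$. You should replace your ``arrange $V$ smooth'' step with this local-triviality argument (or otherwise supply a proof that a smooth fppf cover with a universal bundle exists).
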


\begin{proof}
  The idea is to apply Proposition~\ref{prop:kawatani_okawa_rigidity} to the family of objects $i_*(\overline{E} \otimes L)$ in $\Dbcoh(X)$, but first we need to make sense of the word ``family''. By Lemma~\ref{lem:poincare_bundle_fppf} there exists some fppf morphism $f\colon V \to \Pic^0(\widetilde{Z})$ such that
  \begin{itemize}
  \item the image of $f$ contains the closed point corresponding to the trivial line bundle $\OO_{\widetilde{Z}} \in \Pic^0(\widetilde{Z})(\kk)$;
  \item there exists a line bundle $\mathcal{L}$ on $\widetilde{Z} \times V$ such that for any $\kk$-point $v \in V(\kk)$ the restriction of $\mathcal{L}$ to $\widetilde{Z} \times \{ v \}$ is the line bundle $f(v) \in \Pic^0(\widetilde{Z})(\kk)$.
  \end{itemize}
  
  The line bundle $\mathcal{L}$ on $\widetilde{Z} \times V$ lets us define the family of twists of the object $\overline{E} \in \Dbcoh(\widetilde{Z})$ as an object in $\Dbcoh(\widetilde{Z} \times V)$ via the Fourier--Mukai transform. Denote by $\mathcal{E} \in \Dbcoh(X \times V)$ the pushforward of that family along the closed embedding $\widetilde{Z} \times V \monoarrow X \times V$. Let us show that in fact $\mathcal{E}$ is a perfect complex on $X \times V$.\footnote{
    Over a field of characteristic zero the group scheme $\Pic^0(\widetilde{Z})$ itself is smooth, but even in this case it's not obvious to me whether we can always choose the fppf map $f\colon V \to \Pic^0(\widetilde{Z})$ with a universal family of line bundles such that the source scheme $V$ is smooth. If this were the case, the perfectness would be automatic.
  }
  Perfectness is a local property, so it is enough to prove that $\mathcal{E}$ is perfect in an open neighborhood of any closed point $(z, v) \in Z \times V \subset X \times V$ in its set-theoretical support. Since $\mathcal{L}$ is a line bundle on $\widetilde{Z} \times V$, there exists some Zariski-open neighborhood $W$ of the point $(z, v) \in \widetilde{Z} \times V$ on which $\mathcal{L}$ is trivial. Closed subsets have induced topology, so there exists some Zariski-open set~$W^\prime \subset X \times V$ such that~$W^\prime \cap (Z \times V) = W$. Then the restriction of $\mathcal{E}$ to~$W^\prime$ is isomorphic to the restriction of the object $i_*(\overline{E}) \boxtimes \OO_{V}$ to~$W^\prime$ which is perfect since $i_*(\overline{E})$ is perfect on~$X$. Thus we have~$\mathcal{E} \in \Perf(X \times V)$.

  By Proposition~\ref{prop:kawatani_okawa_rigidity} the set of points $v \in V$ such that the restriction $\mathcal{E}|_{X \times \{v\}}$ lies in the subcategory $\cA$ is a Zariski-open subset $V^\prime \subset V$. It is not empty since by construction it contains the preimage of the trivial line bundle $\OO_{\widetilde{Z}} \in \Pic^0(\widetilde{Z})(\kk)$ along the map $f\colon V \to \Pic^0(\widetilde{Z})$. Since fppf morphisms are open maps (\cite[Tag~01UA]{stacks-project}) the image $f(V^\prime) \subset \Pic^0(\widetilde{Z})$ is a Zariski-open subset which contains the trivial line bundle. This subset $U = f(V^\prime) \subset \Pic^0(\widetilde{Z})$ by construction satisfies the conditions of the lemma.
\end{proof}

Note that by Corollary~\ref{cor:objects_lift_to_support} any object $E \in \cA$ supported on a closed subset $Z \subset X$ lifts to some infinitesimal thickening $i\colon \widetilde{Z} \monoarrow X$. The group $\Pic^0(\widetilde{Z})(\kk)$ does not act on the object~$E \in \Dbcoh(X)$ itself, it only acts on the lift: if~$E_1, E_2 \in \Dbcoh(\widetilde{Z})$ are two objects such that~$i_*(E_1) \iso E \iso i_*(E_2)$, nothing seems to imply that the object~$i_*(E_1 \otimes L)$ is isomorphic to $i_*(E_2 \otimes L)$ if we are not in the situation where the projection formula applies; in general, I don't even know whether the \textit{images} of the two Kodaira--Spencer maps
\[
  T_{\OO_{\widetilde{Z}}} \Pic^0(\widetilde{Z}) \to \Ext^1(E, E)
\]
coincide. Thus the Zariski-open subset $U \subset \Pic^0(\widetilde{Z})$ constructed in Lemma~\ref{lem:twists_locally_remain_in_admissible} depends on the choice of the lift. For future reference, the following observation about the projection formula is indispensable:

\begin{lemma}
  \label{lem:lifts_and_infinitely_extendable_bundles}
  Let $X$ be a proper variety over the field~$\kk$, and let $i\colon \widetilde{Z} \monoarrow X$ be a closed subscheme. Let~$E_1, E_2 \in \Dbcoh(\widetilde{Z})$ be two objects such that~$i_*(E_1)$ is isomorphic to~$i_*(E_2)$ in~$\Dbcoh(X)$. Then for any infinitely extendable line bundle~$L \in \Pic_{\mathrm{i.e.}}(\widetilde{Z} \backslash X)$ there exists an isomorphism~$i_*(E_1 \otimes L) \isoarrow i_*(E_2 \otimes L)$.
\end{lemma}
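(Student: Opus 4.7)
My plan is to transport the given isomorphism $i_*(E_1) \iso i_*(E_2)$ up to a common infinitesimal thickening of $\widetilde{Z}$ inside $X$ on which $L$ admits an extension, and then to invoke the projection formula for the closed embedding of $\widetilde{Z}$ into that thickening. The reason a detour through a thickening is needed at all is that, as noted in Remark~\ref{rem:lifting_not_unique}, on $\widetilde{Z}$ itself the pushforward $i_*$ is not fully faithful on $\RHom$, so the given abstract isomorphism in $\Dbcoh(X)$ cannot be tensored directly with a line bundle living only on~$\widetilde{Z}$.

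Concretely, let $\mathcal{I} \subset \OO_X$ be the ideal sheaf of $\widetilde{Z}$, set $Z_n := \underline{\mathrm{Spec}}(\OO_X / \mathcal{I}^n)$ (so $Z_1 = \widetilde{Z}$), and write $j_n \colon \widetilde{Z} \monoarrow Z_n$ and $\iota_n \colon Z_n \monoarrow X$ for the natural closed immersions, which satisfy $\iota_n \circ j_n = i$. The first step is to apply the second part of Corollary~\ref{cor:objects_lift_to_support}, with $E_1, E_2$ regarded as objects on $Z_1 = \widetilde{Z}$, to the given isomorphism $i_*(E_1) \iso i_*(E_2)$; this produces some $N \geq 1$ together with an isomorphism $\phi \colon j_{N*}(E_1) \isoarrow j_{N*}(E_2)$ in $\Dbcoh(Z_N)$. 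Next, using the assumption $L \in \Pic_{\mathrm{i.e.}}(\widetilde{Z} / X)$ (and enlarging $N$ if necessary, which is harmless since any further thickening of a lift is again a lift), I choose a line bundle $L_N$ on $Z_N$ with $j_N^* L_N \iso L$.

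Once both ingredients live on the same thickening $Z_N$, the conclusion is formal. Tensoring $\phi$ with the line bundle $L_N$ inside $\Dbcoh(Z_N)$ gives an isomorphism $j_{N*}(E_1) \otimes L_N \iso j_{N*}(E_2) \otimes L_N$. The projection formula for the closed immersion $j_N$, applied to the line bundle $L_N$, rewrites each side as
\[
  j_{N*}(E_k) \otimes L_N \iso j_{N*}\bigl( E_k \otimes j_N^* L_N \bigr) \iso j_{N*}(E_k \otimes L), \qquad k = 1, 2,
\]
so we obtain $j_{N*}(E_1 \otimes L) \iso j_{N*}(E_2 \otimes L)$ in $\Dbcoh(Z_N)$. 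Pushing forward along the exact and conservative functor $\iota_{N*}$ and using $\iota_N \circ j_N = i$ then yields the desired isomorphism $i_*(E_1 \otimes L) \iso i_*(E_2 \otimes L)$ in $\Dbcoh(X)$.

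I do not anticipate a real obstacle here: the statement is essentially tautological once both the isomorphism and the line bundle have been spread to a common $Z_N$, and these two spreadings are exactly what Corollary~\ref{cor:objects_lift_to_support} and the definition of $\Pic_{\mathrm{i.e.}}(\widetilde{Z} / X)$ guarantee. The mildly delicate point is that the integer $N$ coming from the lifting of $\phi$ may be larger than any particular thickening on which one has already extended $L$, so one has to extend $L$ further; but this is immediate from the definition of infinite extendability.
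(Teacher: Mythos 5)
Your proof is correct and follows essentially the same route as the paper: lift the given isomorphism to a common thickening $Z_N$ via Corollary~\ref{cor:objects_lift_to_support}, extend $L$ to $Z_N$ using infinite extendability, tensor, apply the projection formula for the inclusion $\widetilde{Z} \monoarrow Z_N$, and push forward. The only cosmetic difference is that you spell out the tensoring step as a separate operation, which the paper compresses into a single sentence.
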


\begin{proof}
  By Corollary~\ref{cor:objects_lift_to_support} the isomorphism $i_*(E_1) \iso i_*(E_2)$ in $\Dbcoh(X)$ lifts to an isomorphism of objects in $\Dbcoh(Z_n)$ for some infinitesimal neighborhood $i_n\colon Z_n \monoarrow X$ of $\widetilde{Z}$. Let $i_{1, n}\colon \widetilde{Z} \monoarrow Z_n$ be the embedding. Since $L$ is infinitely extendable, there exists some line bundle $\overline{L}$ in $\Pic(Z_n)$ such that $i_{1, n}^* \overline{L} \iso L$. Note that
  \[ i_*(E_1 \otimes L) \iso  (i_{n} \circ i_{1, n})_{*} (E_1 \otimes i_{1, n}^* \overline{L}) \caniso i_{n *}(i_{1, n *}(E_1) \otimes \overline{L}) \]
  by the projection formula. Thus an isomorphism between $i_{1, n *}(E_1)$ and $i_{1, n *}(E_2)$ in $\Dbcoh(Z_n)$ implies the isomorphism between the twists of those two objects by $\overline{L}$, which implies the desired isomorphism after taking the pushforward $i_{n *}$.
\end{proof}

When an admissible subcategory $\cA \subset \Dbcoh(X)$ is supported, in the sense of Definition~\ref{def:support_subcategory}, on a closed subset $Z \subset X$, there is a version of Lemma~\ref{lem:twists_locally_remain_in_admissible} which holds uniformly for all objects in $\cA$. In order to make sense of that statement we need to know that all objects in the admissible subcategory $\cA \subset \Dbcoh(X)$ satisfying $\supp(\cA) = Z$ admit a lift to a common infinitesimal thickening $\widetilde{Z} \subset X$. The lemma below shows that this is true, and, moreover, we can choose a lift of all objects functorially, though in a non-canonical way.

\begin{lemma}
  \label{lem:supported_subcategories_factor}
  Let $X$ be a smooth proper variety. Let $\cA \subset \Dbcoh(X)$ be an admissible subcategory such that $\supp(\cA) = Z$ is not equal to $X$. Then there exists a closed subscheme $i\colon \widetilde{Z} \monoarrow X$ which is an infinitesimal thickening of $Z$ and a functor $\iota_{\cA, \widetilde{Z}}\colon \cA \to \Dbcoh(\widetilde{Z})$ such that the canonical embedding $\cA \monoarrow \Dbcoh(X)$ factors as $i_* \circ \iota_{\cA, \widetilde{Z}}$.
\end{lemma}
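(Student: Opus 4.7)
The plan is to construct $\iota_{\cA, \widetilde{Z}}$ as the restriction to $\cA$ of a Fourier--Mukai transform whose kernel is obtained by lifting the kernel of the left projection functor to an infinitesimal thickening of $Z \times Z$. Let $K \in \Dbcoh(X \times X)$ denote the Fourier--Mukai kernel of the endofunctor $\iota \circ \cA_L \colon \Dbcoh(X) \to \Dbcoh(X)$, where $\cA_L$ is the left adjoint to the inclusion $\iota$. Since $\cA_L|_{\cA} \iso \mathrm{id}_{\cA}$, the transform $\Phi_K$ restricts on $\cA$ to the inclusion $\iota$. By Lemma~\ref{lem:support_of_projection_kernel} the support of $K$ is contained in $Z \times Z$.

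Applying Corollary~\ref{cor:objects_lift_to_support} to $K \in \Dbcoh(X \times X)$ yields a lift of $K$ to the closed subscheme of $X \times X$ defined by the $n$-th power of the ideal sheaf $\mathcal{I}_{Z \times Z} = \mathcal{I}_Z \boxtimes \OO_X + \OO_X \boxtimes \mathcal{I}_Z$. Since $\mathcal{I}_{Z \times Z}^n \supseteq \mathcal{I}_Z^n \boxtimes \OO_X + \OO_X \boxtimes \mathcal{I}_Z^n$, this thickening is contained in the product $Z_n \times Z_n$, so pushing forward along the further closed immersion produces an object $\overline{K} \in \Dbcoh(\widetilde{Z} \times \widetilde{Z})$ with $\widetilde{Z} := Z_n$ such that $(j \times j)_*(\overline{K}) \iso K$, where $j \colon \widetilde{Z} \monoarrow X$ is the embedding.

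Define $\iota_{\cA, \widetilde{Z}} \colon \cA \to \Dbcoh(\widetilde{Z})$ as the restriction to $\cA$ of the Fourier--Mukai transform $\Phi_{\widetilde{K}}$ with kernel $\widetilde{K} := (j \times \mathrm{id}_{\widetilde{Z}})_*(\overline{K}) \in \Dbcoh(X \times \widetilde{Z})$. For the output to land in $\Dbcoh(\widetilde{Z})$, observe that since $X$ is smooth, any $E \in \cA$ is perfect on $X$, so its pullback $\pi_X^*(E)$ along the projection $\pi_X \colon X \times \widetilde{Z} \to X$ remains perfect; tensoring with the bounded object $\widetilde{K}$ yields a bounded object supported on $\widetilde{Z} \times \widetilde{Z}$, and its pushforward to $\widetilde{Z}$ reduces to the proper pushforward along the second projection $\widetilde{Z} \times \widetilde{Z} \to \widetilde{Z}$. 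The projection formula applied to the closed immersion $\mathrm{id}_X \times j$ then gives the natural isomorphism
\[ j_* \bigl( \iota_{\cA, \widetilde{Z}}(E) \bigr) \iso \Phi_{(j \times j)_*(\overline{K})}(E) \iso \Phi_K(E) \iso E \]
for $E \in \cA$, establishing the required factorization. The main technical point is this boundedness check: since $\widetilde{Z}$ is non-reduced and may have infinite homological dimension, the analogous construction using $\overline{K}$ as a kernel on $\widetilde{Z} \times \widetilde{Z}$ and pulling back $j^*(E)$ would not obviously produce bounded complexes, so the smoothness of $X$ enters essentially through the perfectness of $\pi_X^*(E)$ on $X \times \widetilde{Z}$.
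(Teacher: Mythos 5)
Your proposal is correct and takes essentially the same approach as the paper: take the Fourier--Mukai kernel $\cA_L$ of the left projection functor, note via Lemma~\ref{lem:support_of_projection_kernel} that it is supported on $Z \times Z$, lift it by Corollary~\ref{cor:objects_lift_to_support} (equivalently \cite[Lem.~7.40]{rouquier-dimension}) to some $Z_N \times Z_N$, and then use the projection formula together with the perfectness of objects on the smooth $X$ to check that the resulting functor lands in $\Dbcoh(\widetilde{Z})$ and composes with $i_*$ to recover the inclusion. The only cosmetic difference is that the paper factors the functor explicitly as $\Perf(X) \xrightarrow{i^*} \Perf(\widetilde{Z}) \to \Dbcoh(\widetilde{Z}) \xrightarrow{i_*} \Dbcoh(X)$, whereas you write it as a single Fourier--Mukai transform with kernel on $X \times \widetilde{Z}$; these are equivalent by the projection formula, and your boundedness check plays the same role as the paper's remark that the transform along $\overline{\cA_L}$ is well-defined from $\Perf(\widetilde{Z})$ to $\Dbcoh(\widetilde{Z})$.
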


\begin{proof}
  Set $\cB = \emptyperp \cA$ and consider the semiorthogonal decomposition $\Dbcoh(X) = \langle \cA, \cB \rangle$. Since~$X$ is smooth and proper, by \cite[Thm.~7.1]{kuznetsov-basechange} there exists a universal projection triangle, i.e., a distinguished triangle
  \[ \cB_R \to \OO_{\Delta_X} \to \cA_L \to \cB_R [1] \]
  of objects in $\Dbcoh(X \times X)$ such that the Fourier--Mukai transform along the object $\cA_L$ is the projection to the subcategory $\cA$. The support condition $\supp(\cA) = Z$ implies that~$\supp(\cA_L) = Z \times Z \subset X \times X$ by~Lemma~\ref{lem:support_of_projection_kernel}.

  Let $\mathcal{I}$ be the ideal sheaf of $Z \subset X$, and denote by $Z_n$ the closed subscheme defined by the ideal $\mathcal{I}^n$. Note that the subschemes $Z_n \times Z_n \subset X \times X$ form a cofinal subset among infinitesimal neighborhoods of $Z \times Z \subset X \times X$. By~\cite[Lem.~7.40]{rouquier-dimension} there exists some~$N \gg 0$ such that the object~$\cA_L \in \Dbcoh(X \times X)$, set-theoretically supported on~$Z \times Z$, is isomorphic to a pushforward of the object~$\overline{\cA_L}$ from the category~$\Dbcoh(Z_N \times Z_N)$. Let $i\colon \widetilde{Z} \monoarrow X$ denote the inclusion of the closed subscheme $Z_N$ into $X$. Then the Fourier--Mukai transform along the object 
  \[ \cA_L \iso (i \times i)_*(\overline{\cA_L}) \in \Dbcoh(X \times X) \]
  decomposes, by the projection formula, into a composition
    \begin{equation}
      \label{eq:fm_kernel_factorization}
      \Dbcoh(X) = \Perf(X) \xrightarrow{i^*} \Perf(\widetilde{Z}) \xrightarrow{\overline{\cA_L}} \Dbcoh(\widetilde{Z}) \xrightarrow{i_*} \Dbcoh(X).
    \end{equation}
  Note that even though $\widetilde{Z}$ is not smooth, it is proper, so the Fourier--Mukai transform along~$\overline{\cA_L} \in \Dbcoh(\widetilde{Z} \times \widetilde{Z})$ is well-defined as a functor from $\Perf(\widetilde{Z})$ to $\Dbcoh(\widetilde{Z})$. The canonical embedding $\cA \subset \Dbcoh(X)$, composed with the sequence of functors~\eqref{eq:fm_kernel_factorization}, is the factorization claimed in the statement.
\end{proof}

\begin{remark}
  The choice of the lift $\overline{\cA_L}$ in $\Dbcoh(\widetilde{Z} \times \widetilde{Z})$ is not canonical, as discussed in Remark~\ref{rem:lifting_not_unique}. Thus the functor $\iota_{\cA, \widetilde{Z}}$ constructed above is not canonical as well. I don't know whether the fact that $\cA_L$ fits into the universal projection triangle helps constraining the choices.
\end{remark}

\begin{remark}
  \label{rem:admissible_lifts_to_nonfull_subcategory}
  The functor $\iota_{\cA, \widetilde{Z}}$ constructed above is faithful but not necessarily full, so $\cA$ does not embed as a full subcategory into $\Dbcoh(\widetilde{Z})$. So Lemma~\ref{lem:supported_subcategories_factor} does not imply that an admissible subcategory $\cA$ supported on $Z$ lifts to an admissible subcategory of some $\widetilde{Z} \subset X$. As such, the study of admissible subcategories of higher-dimensional varieties supported on curves cannot be reduced to the study of admissible subcategories for derived categories of curves, even non-reduced ones.
\end{remark}

We can now state the uniform version of Lemma~\ref{lem:twists_locally_remain_in_admissible}.

\begin{proposition}
  \label{prop:twists_locally_remain_in_admissible}
  Let $X$ be a smooth proper variety. Let $\cA \subset \Dbcoh(X)$ be an admissible subcategory such that $\supp(\cA) = Z$ is not equal to $X$. Let $i\colon \widetilde{Z} \monoarrow X$ and $\iota_{\cA, \widetilde{Z}}\colon \cA \to \Dbcoh(\widetilde{Z})$ be as in Lemma~\ref{lem:supported_subcategories_factor}. Then there exists a Zariski-open subset $U \subset \Pic^0(\widetilde{Z})$ containing the closed point $\OO_{\widetilde{Z}} \in \Pic^0(\widetilde{Z})(\kk)$ such that for any line bundle $L \in U(\kk)$ the image of the composition of functors
  \[ \cA \xrightarrow{\iota_{\cA, \widetilde{Z}}} \Dbcoh(\widetilde{Z}) \xrightarrow{- \otimes L} \Dbcoh(\widetilde{Z}) \xrightarrow{i_*} \Dbcoh(X) \]
  lies in the full subcategory $\cA \subset \Dbcoh(X)$.
\end{proposition}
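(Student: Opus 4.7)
The plan is to reduce the uniform statement to the single-object version in Lemma~\ref{lem:twists_locally_remain_in_admissible}, applied to a classical generator of $\cA$, and then to propagate the conclusion from the generator to all of $\cA$ using exactness.

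First, since $X$ is smooth and projective, $\Dbcoh(X)$ admits a classical generator (by \cite{bondal-vdbergh}, as already used in the proof of Lemma~\ref{lem:support_of_subcategory_is_closed}), and applying the projection functor~$\cA_L$ produces a classical generator $G \in \cA$. Put $\bar G := \iota_{\cA, \widetilde Z}(G) \in \Dbcoh(\widetilde Z)$. The factorization property of Lemma~\ref{lem:supported_subcategories_factor} says that the inclusion $\cA \hookrightarrow \Dbcoh(X)$ equals $i_* \circ \iota_{\cA, \widetilde Z}$, so $i_*(\bar G) \simeq G$ lies in $\cA$. Thus the hypotheses of Lemma~\ref{lem:twists_locally_remain_in_admissible} are satisfied for the single object $\bar G$, and we obtain a Zariski-open neighborhood $U \subset \Pic^0(\widetilde Z)$ of $\OO_{\widetilde Z}$ such that $i_*(\bar G \otimes L) \in \cA$ for every $L \in U(\kk)$.

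Fix such an~$L$, and consider the composition
\[
  F_L \;:=\; i_* \circ (-\otimes L) \circ \iota_{\cA, \widetilde Z} \,\colon\, \cA \longrightarrow \Dbcoh(X).
\]
Each of the three factors is an exact (triangulated) functor: $\iota_{\cA, \widetilde Z}$ because it is defined via Fourier--Mukai transform composed with pullback in the proof of Lemma~\ref{lem:supported_subcategories_factor}, and the other two tautologically. Hence $F_L$ is exact. By the previous paragraph, $F_L(G) \in \cA$.

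Now let $\cA' := \{ A \in \cA \mid F_L(A) \in \cA \}$. Because $F_L$ is exact and $\cA$ is triangulated and closed under direct summands (being admissible), $\cA'$ is a thick triangulated subcategory of $\cA$. It contains the classical generator~$G$, and therefore $\cA' = \cA$. This holds for every $L \in U(\kk)$, which is exactly the conclusion. The main conceptual point, which the argument relies on implicitly, is that although the choice of $\iota_{\cA, \widetilde Z}$ is non-canonical (Remark~\ref{rem:admissible_lifts_to_nonfull_subcategory}) so that $\Pic^0(\widetilde Z)$ does not literally act on~$\cA$, once $\iota_{\cA, \widetilde Z}$ is fixed the functors $F_L$ are well-defined, and the classical-generator bootstrap upgrades the pointwise rigidity of Lemma~\ref{lem:twists_locally_remain_in_admissible} to uniform rigidity at the price of no extra work.
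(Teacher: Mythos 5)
Your proof is correct and follows the same route as the paper: take a classical generator $G$ of $\cA$, apply Lemma~\ref{lem:twists_locally_remain_in_admissible} to $\iota_{\cA,\widetilde Z}(G)$ to obtain the open set $U$, and then propagate to all of $\cA$ by observing that $F_L$ is a triangulated functor and $\cA$ is thick. The only cosmetic difference is that you phrase the bootstrap via the full preimage $F_L^{-1}(\cA)$, while the paper notes $F_L(\langle G\rangle)\subset\langle F_L(G)\rangle\subset\cA$; these are equivalent.
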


\begin{proof}
  The category $\cA$ admits a classical generator since the projection functor $\Dbcoh(X) \to \cA$ is essentially surjective and $\Dbcoh(X)$ has a classical generator \cite[Cor.~3.1.2]{bondal-vdbergh}. Let $G \in \cA$ be a classical generator.

  By Lemma~\ref{lem:twists_locally_remain_in_admissible} there exists a Zariski-open subset $U \subset \Pic^0(\widetilde{Z})$ such that $i_*(\iota_{\cA, \widetilde{Z}}(G) \otimes L)$ belongs to the subcategory $\cA \subset \Dbcoh(X)$ for all line bundles $L \in U(\kk)$. It remains to show that the same open subset $U$ works for any object $A \in \cA$, not just for the generator. This is a standard argument: for any $L$ the functor
  \[ A \mapsto i_*(\iota_{\cA, \widetilde{Z}}(A) \otimes L) \]
  is triangulated, so the image of the subcategory $\langle G \rangle$ is contained in the subcategory of~$\Dbcoh(X)$ generated by the object~$i_*(\iota_{\cA, \widetilde{Z}}(G) \otimes L) \in \cA$. By definition $\langle G \rangle = \cA$, which finishes the proof.
\end{proof}

Note that, as in Lemma~\ref{lem:twists_locally_remain_in_admissible}, the collection of line bundles $L$ for which the condition 
\[
  i_*(\iota_{\cA, \widetilde{Z}}(-) \otimes L) \in \cA
\]
holds for all objects in $\cA$ may not be a subgroup in $\Pic^0(\widetilde{Z})$ since in general
\[ \iota_{\cA, \widetilde{Z}}(i_*(\iota_{\cA, \widetilde{Z}}(-) \otimes L)) \neq \iota_{\cA, \widetilde{Z}}(-) \otimes L, \]
those two objects only become isomorphic after $i_*$. This prevents us from deducing the full $\Pic^0$-invariance immediately from the fact that an open neighborhood of the identity element preserves $\cA \subset \Dbcoh(X)$.

Kawatani and Okawa used a mostly equivalent approach to prove the $\Pic^0(X)$-invariance for admissible subcategories in $\Dbcoh(X)$ in \cite[Th.~1.4]{kawatani-okawa}: following their argument, we see that the $\Pic^0(\widetilde{Z})$-invariance of $\cA$ could be deduced if we knew that not just the category $\cA$ has a classical generator, but the category $(i_*)^{-1}(\cA) \subset \Dbcoh(\widetilde{Z})$ of objects whose pushforward lands in $\cA$ has a classical generator as well. However, there is no reason to expect this.

Nevertheless, using the projection formula we can at least show the invariance under the action of infinitely extendable line bundles (Definition~\ref{def:infinitely_extendable_line_bundles}). This is the main theorem of this section. For readability purposes we first state it on the level of $\kk$-points of $\Pic^0$ assuming that $\kk$ is algebraically closed. We give a more versatile, but heavier in terms of notation, statement below in Theorem~\ref{thm:admissible_invariance_abstract}.

\begin{theorem}
  \label{thm:admissible_invariance}
  Let $X$ be a smooth proper variety over an algebraically closed field $\kk$. Let~$Z \subset X$ be a subvariety not equal to $X$, and let~$\cA \subset \Dbcoh(X)$ be an admissible subcategory such that~$\supp(\cA) = Z$. Let $i\colon \widetilde{Z} \monoarrow X$ and $\iota_{\cA, \widetilde{Z}}\colon \cA \to \Dbcoh(\widetilde{Z})$ be as in Lemma~\ref{lem:supported_subcategories_factor}. For a line bundle $L \in \Pic^0(\widetilde{Z})(\kk)$ denote by $T_{L}\colon \cA \to \Dbcoh(X)$ the composition $i_*(\iota_{\cA, \widetilde{Z}}(-) \otimes L)$. Then the set $U$ consisting of all line bundles $L \in \Pic^0(\widetilde{Z})(\kk)$ such that $T_{L}(\cA) \subset \cA$ is:
  \begin{itemize}
  \item non-empty (contains $\OO_{\widetilde{Z}} \in \Pic^0(\widetilde{Z})(\kk)$);
  \item Zariski-open; and
  \item invariant under the action of the connected group $\Pic^0_{\mathrm{i.e.}}(\widetilde{Z} \backslash X)(\kk)$ of infinitely extendable line bundles.
  \end{itemize}
\end{theorem}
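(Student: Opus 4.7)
The plan is to verify the three bullets in order. Non-emptiness is immediate: the trivial line bundle $\OO_{\widetilde{Z}}$ gives $T_{\OO_{\widetilde{Z}}} = i_* \circ \iota_{\cA, \widetilde{Z}}$, which is just the embedding $\cA \monoarrow \Dbcoh(X)$ by the defining property of $\iota_{\cA, \widetilde{Z}}$ in Lemma~\ref{lem:supported_subcategories_factor}. For openness, I fix a classical generator $G \in \cA$---which exists because $\Dbcoh(X)$ has one by \cite{bondal-vdbergh} and $\cA$ is the image of the projection functor---and note that, since $T_L$ is a triangulated functor preserving direct summands, the condition $T_L(\cA) \subset \cA$ is equivalent to the single condition $T_L(G) \in \cA$. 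The rest of the openness argument is a direct reprise of Proposition~\ref{prop:twists_locally_remain_in_admissible}: I choose an fppf cover $f\colon V \to \Pic^0(\widetilde{Z})$ with a Poincar\'e line bundle $\mathcal{L}$ via Lemma~\ref{lem:poincare_bundle_fppf}, form the family $\mathcal{E} := (i \times \mathrm{id}_V)_*((\iota_{\cA, \widetilde{Z}}(G) \boxtimes \OO_V) \otimes \mathcal{L})$ on $X \times V$ (a perfect complex by the local triviality of $\mathcal{L}$), and invoke Proposition~\ref{prop:kawatani_okawa_rigidity}. The maximal open locus in $V$ where the family fiberwise lies in $\cA$ is precisely $f^{-1}(U)$, so openness of fppf morphisms forces $U$ to be open in $\Pic^0(\widetilde{Z})$.

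The real content is the third bullet. The key computation is that for $L \in U$, any $A \in \cA$, and any $M \in \Pic^0_{\mathrm{i.e.}}(\widetilde{Z} \backslash X)(\kk)$, the two objects $\iota_{\cA, \widetilde{Z}}(A) \otimes L$ and $\iota_{\cA, \widetilde{Z}}(T_L(A))$ are both lifts of the same object $T_L(A) \in \cA$ from $\Dbcoh(X)$ to $\Dbcoh(\widetilde{Z})$. Since $M$ is infinitely extendable, Lemma~\ref{lem:lifts_and_infinitely_extendable_bundles} produces an isomorphism
\[
  T_{LM}(A) = i_*(\iota_{\cA, \widetilde{Z}}(A) \otimes L \otimes M) \iso i_*(\iota_{\cA, \widetilde{Z}}(T_L(A)) \otimes M) = T_M(T_L(A)).
\]
Consequently the subset $S := U \cap \Pic^0_{\mathrm{i.e.}}(\widetilde{Z} \backslash X)(\kk)$ is closed under multiplication: for $L, M \in S$ we have $T_L(A) \in \cA$ and $M \in U$, so $T_{LM}(A) \iso T_M(T_L(A)) \in \cA$, hence $LM \in S$. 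Thus $S$ is an open submonoid of $\Pic^0_{\mathrm{i.e.}}(\widetilde{Z} \backslash X)(\kk)$ containing the identity.

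To finish, I invoke the general fact that any Zariski-open submonoid $S$ of a connected algebraic group $G$ over $\kk$ containing the identity equals $G(\kk)$. Indeed, $W := S \cap S^{-1}$ is closed under multiplication (if $w_1, w_2 \in W$ then $w_2^{-1} w_1^{-1} = (w_1 w_2)^{-1} \in S$, so $w_1 w_2 \in W$) and under inverses, so it is an open subgroup; any open subgroup of a connected algebraic group is also closed, since the complement is a disjoint union of open cosets, and so by connectedness $W = G(\kk)$. Applying this to $S$ yields $\Pic^0_{\mathrm{i.e.}}(\widetilde{Z} \backslash X)(\kk) \subset U$, and the invariance follows immediately from the displayed identity: for $L \in U$ and any $M \in \Pic^0_{\mathrm{i.e.}}(\widetilde{Z} \backslash X)(\kk)$, we now know $M \in U$, so $T_{LM}(A) \iso T_M(T_L(A)) \in \cA$, i.e.\ $LM \in U$. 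The main obstacle is that $U$ is not a priori a subgroup of $\Pic^0(\widetilde{Z})$ and I have no way to invert $T_L$ on $\cA$ for general $L \in U$; this difficulty is circumvented by combining Lemma~\ref{lem:lifts_and_infinitely_extendable_bundles}, which provides the semigroup identity $T_{LM} \iso T_M \circ T_L$ precisely for infinitely extendable $M$, with the classical open-subgroup trick.
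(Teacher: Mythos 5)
Your proof is correct and follows essentially the same route as the paper: non-emptiness is trivial, openness comes from the generator reduction plus Proposition~\ref{prop:kawatani_okawa_rigidity} applied to an fppf-local family (one should really run the fppf argument in a neighbourhood of each $\kk$-point of $\Pic^0(\widetilde{Z})$, not just near the identity, but that is a minor omission shared with the paper's ``a basic variation of the argument''), and the crucial computation is the cocycle identity $T_{LM}(A)\iso T_M(T_L(A))$, obtained exactly as in the paper by observing that $\iota_{\cA,\widetilde{Z}}(A)\otimes L$ and $\iota_{\cA,\widetilde{Z}}(T_L(A))$ are two lifts of $T_L(A)$ and then invoking Lemma~\ref{lem:lifts_and_infinitely_extendable_bundles} for the infinitely extendable twist. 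The only divergence is cosmetic: you conclude by showing the open submonoid $S=U\cap\Pic^0_{\mathrm{i.e.}}(\widetilde{Z}\backslash X)(\kk)$ must be all of $\Pic^0_{\mathrm{i.e.}}(\widetilde{Z}\backslash X)(\kk)$ via the ``open subgroup of a connected group is the whole group'' trick applied to $S\cap S^{-1}$, whereas the paper appeals to $W(\kk)$ generating the $\kk$-points of the connected group and then one-sidedly multiplies $U$ by elements of $W$. Your version is slightly more explicit about why inverses do not cause trouble (the paper's phrasing implicitly relies on the stronger fact $W\cdot W = G$ for nonempty open $W$ in an irreducible group), but the substance is the same.
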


\begin{proof}
  By Proposition~\ref{prop:twists_locally_remain_in_admissible} we know that the set $U$, as defined in the statement of the theorem, is non-empty and contains a Zariski-open neighborhood of the identity element in $\Pic^0(\widetilde{Z})$. A basic variation of the argument shows that $U$ is, in fact, Zariski-open in $\Pic^0(\widetilde{Z})$. It remains to prove the last claim.
  
  For brevity denote the group scheme $\Pic^0_{\mathrm{i.e.}}(\widetilde{Z} \backslash X)$ by $G$. Let $W$ be the intersection $U \cap G$. Then $W$ is a non-empty Zariski-open subset of the connected group scheme $G$, so the smallest subgroup of $G$ containing $W$ coincides with the whole $G$. Since $\kk$ is algebraically closed, this implies that the $\kk$-points of $W$ generate the group of $\kk$-points of $G$. Thus to prove the $G(\kk)$-invariance of~$U$ it is enough to show that for any line bundle~$L \in W(\kk)$ we have~$L \cdot U \subset U$.

  Let $L^\prime \in U$ be a line bundle. We want to show that the tensor product $L \otimes L^\prime$ also lies in $U$. Let $A \in \cA$ be any object. By the definition of $U$ it suffices to prove that the object~$T_{L^\prime \otimes L}(A)$ lies in~$\cA$.
  Since both $L$ and $L^\prime$ lie in $U$, we know that the object $T_{L}(T_{L^\prime}(A))$ lies in $\cA$. We claim that the objects $T_{L^\prime \otimes L}(A)$ and $T_{L}(T_{L^\prime}(A))$ are, in fact, isomorphic.

  To see this, recall first that the composition $i_*(\iota_{\cA, \widetilde{Z}}(-))$ is the identity functor $\cA \to \cA$ by the construction of $\iota_{\cA, \widetilde{Z}}$ in Lemma~\ref{lem:supported_subcategories_factor}. Thus the pushforward $i_*$ of the object $\iota_{\cA, \widetilde{Z}}(T_{L^\prime}(A))$ is isomorphic to $T_{L^\prime}(A)$. Since by definition $T_{L^\prime}(A)$ is $i_*(\iota_{\cA, \widetilde{Z}}(A) \otimes L^\prime)$, we observe that the objects
  \[ \iota_{\cA, \widetilde{Z}}(T_{L^\prime}(A)) \quad \text{and} \quad \iota_{\cA, \widetilde{Z}}(A) \otimes L^\prime \]
  in $\Dbcoh(\widetilde{Z})$ become isomorphic in $\Dbcoh(X)$ after the application of the pushforward $i_*$. The other line bundle, $L$, lies in $W \subset \Pic_{\mathrm{i.e.}}(\widetilde{Z} \backslash X)$, so it is an infinitely extendable line bundle. Then by Lemma~\ref{lem:lifts_and_infinitely_extendable_bundles} we conclude that the objects
  \[ \iota_{\cA, \widetilde{Z}}(T_{L^\prime}(A)) \otimes L \quad \text{and} \quad \iota_{\cA, \widetilde{Z}}(A) \otimes L^\prime \otimes L \]
  also become isomorphic after the pushforward $i_*$. But their pushforwards
  \[ i_*(\iota_{\cA, \widetilde{Z}}(T_{L^\prime}(A)) \otimes L) \quad \text{and} \quad i_*(\iota_{\cA, \widetilde{Z}}(A) \otimes L^\prime \otimes L) \]
  are by definition of $T$ isomorphic to
  \[ T_{L}(T_{L^\prime}(A)) \quad \text{and} \quad T_{L^\prime \otimes L}(A), \]
  respectively. The object on the left hand side lies in $\cA$, so we see that~$T_{L^\prime \otimes L}(\cA) \subset \cA$, which implies that~$L^\prime \otimes L$ lies in $U$. As explained above, this implies that the set $U$ is invariant under the action of $\Pic^0_{\mathrm{i.e.}}(\widetilde{Z} \backslash X)$.
\end{proof}

\begin{remark}
  Theorem~\ref{thm:admissible_invariance} is an improved version of Proposition~\ref{prop:twists_locally_remain_in_admissible}, which in turn was built on the ``local'' invariance statement from Lemma~\ref{lem:twists_locally_remain_in_admissible}. The key difference between Proposition~\ref{prop:twists_locally_remain_in_admissible} and Lemma~\ref{lem:twists_locally_remain_in_admissible} is that in Lemma~\ref{lem:twists_locally_remain_in_admissible} we do not require the whole subcategory~$\cA$ to be supported on~$Z$, but work only with a single object of $\cA$ which happens to be supported on~$Z$. This single object version of the statement cannot be improved to claim the invariance under $\Pic^0_{\mathrm{i.e.}}$, or at least the proof above does not work.
\end{remark}

For readers interested in non-algebraically closed fields we state a more general version of Theorem~\ref{thm:admissible_invariance}.

\begin{theorem}
  \label{thm:admissible_invariance_abstract}
  Let $X$ be a smooth proper variety over a field $\kk$. Let $\cA \subset \Dbcoh(X)$ be an admissible subcategory such that $\supp(\cA) = Z$ is not equal to $X$. Let $i\colon \widetilde{Z} \monoarrow X$ and~$\iota_{\cA, \widetilde{Z}}\colon \cA \to \Dbcoh(\widetilde{Z})$ be as in Lemma~\ref{lem:supported_subcategories_factor}. Then there exists the largest Zariski-open subset~$U \subset \Pic^0(\widetilde{Z})$ with the following properties:
  \begin{itemize}
  \item $U$ preserves $\cA$: there exists a surjective fppf morphism $f\colon V \to U$ and a line bundle~$\mathcal{L}$ on $V \times \widetilde{Z}$ universal with respect to the composition $V \to U \monoarrow \Pic^0(\widetilde{Z})$ such that the image of the functor
  \[ (\mathrm{id}_{V} \times i)_* (\pi_2^*(\iota_{\cA, \widetilde{Z}}(-)) \otimes \mathcal{L})\colon \cA \to \Perf(V \times X) \]
  is contained in $\Perf(V) \boxtimes \cA$.
  \item $U$ is large: if $x \in \Pic^0(\widetilde{Z})$ is a closed point corresponding to a line bundle $L$ on $\widetilde{Z}_{K}$ for some field extension $K \supset \kk$ and the composition $(i \otimes K)_*(\iota_{\cA, \widetilde{Z}}(-) \otimes L)\colon \cA \to \Perf(X_K)$ has image in $\cA_K \subset \Perf(X_K)$, then $x \in U$. In particular, $U$ contains the point corresponding to the trivial line bundle~$\OO_{\widetilde{Z}} \in \Pic^0(\widetilde{Z})(\kk)$.
  \item $U$ is invariant under the action of the connected group $\Pic^0_{\mathrm{i.e.}}(\widetilde{Z} \backslash X)$.
  \end{itemize}
\end{theorem}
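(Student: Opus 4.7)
The plan is to adapt the argument of Theorem~\ref{thm:admissible_invariance}, replacing $\kk$-point reasoning with fppf descent so that no algebraic closedness hypothesis is needed. Fix a classical generator $G_0 \in \cA$, which exists since the projection $\Dbcoh(X) \to \cA$ is essentially surjective and $\Dbcoh(X)$ admits a classical generator. By Lemma~\ref{lem:poincare_bundle_fppf} applied over each quasi-projective open piece of $\Pic^0(\widetilde{Z})$, choose a surjective fppf morphism $f\colon V \to \Pic^0(\widetilde{Z})$ together with a line bundle $\mathcal{L}$ on $\widetilde{Z} \times V$ universal with respect to $f$. Form the object
\[
  \mathcal{E}_V := (i \times \mathrm{id}_V)_*\bigl(\pi_1^*\iota_{\cA, \widetilde{Z}}(G_0) \otimes \mathcal{L}\bigr)
\]
on $X \times V$; the local trivialization argument from the proof of Lemma~\ref{lem:twists_locally_remain_in_admissible} shows it is perfect. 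Proposition~\ref{prop:kawatani_okawa_rigidity} yields a largest Zariski-open $V' \subset V$ on which the base change lies in $\cA \boxtimes \Perf(V')$. Since fppf morphisms are open, $U := f(V')$ is open in $\Pic^0(\widetilde{Z})$. The triangulated generation argument from Proposition~\ref{prop:twists_locally_remain_in_admissible} upgrades preservation of $G_0$ to preservation of all of $\cA$, which establishes the first property and shows that $U$ is independent of the choice of $G_0$. The largeness property follows from the pointwise part of Proposition~\ref{prop:kawatani_okawa_rigidity}, which also gives maximality.

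For the $\Pic^0_{\mathrm{i.e.}}$-invariance, write $H := \Pic^0_{\mathrm{i.e.}}(\widetilde{Z} \backslash X)$ and set $W := U \cap H$, a non-empty Zariski-open subscheme of the connected group scheme $H$. A standard fact about connected group schemes of finite type over a field is that any non-empty open subscheme generates them: the iterated multiplication $W^{\times n} \to H$ is scheme-theoretically surjective for $n \gg 0$. So it suffices to prove $W \cdot U \subset U$ scheme-theoretically inside $\Pic^0(\widetilde{Z})$. I would prove this by building a two-parameter family: take fppf covers $f_j\colon V_j \to \Pic^0(\widetilde{Z})$ with universal bundles $\mathcal{L}_j$, restricting over the preimages of $W$ and of $U$ respectively, and on $X \times V_1 \times V_2$ form the perfect complex
\[
  (i \times \mathrm{id})_*\bigl(\pi_1^*\iota_{\cA, \widetilde{Z}}(G_0) \otimes \pi_{12}^*\mathcal{L}_1 \otimes \pi_{13}^*\mathcal{L}_2\bigr).
\]
At a closed point $(v_1, v_2)$ the fiber is $T_{L_1}(T_{L_2}(G_0))$, which lies in $\cA$ by the definition of $U$. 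The crucial identification is the projection-formula isomorphism $T_{L_1}(T_{L_2}(A)) \iso T_{L_1 \otimes L_2}(A)$ used in the proof of Theorem~\ref{thm:admissible_invariance}: the objects $\iota_{\cA, \widetilde{Z}}(T_{L_2}(A))$ and $\iota_{\cA, \widetilde{Z}}(A) \otimes L_2$ are two lifts to $\Dbcoh(\widetilde{Z})$ of the same object of $\cA$, and since $L_1$ is infinitely extendable, Lemma~\ref{lem:lifts_and_infinitely_extendable_bundles} identifies their twists by $L_1$ after pushforward by $i$. Proposition~\ref{prop:kawatani_okawa_rigidity} combined with openness of fppf morphisms and the maximality of $U$ then shows that the multiplication map sends $W \times U$ into $U$.

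The main obstacle is technical rather than conceptual: one must perform the projection-formula identification in the universal family on $\widetilde{Z} \times V_1 \times V_2$ rather than pointwise, which requires a family version of Lemma~\ref{lem:lifts_and_infinitely_extendable_bundles} obtained by spreading out both the lift of the object and the extension of $L_1$ over the parameter space. Over an algebraically closed field the generation of $H$ by $W$ is elementary group theory on $\kk$-points, whereas without that hypothesis one relies on the scheme-theoretic generation statement above, which ultimately rests on the irreducibility of connected group schemes of finite type over a field. No new mathematical input is needed beyond Theorem~\ref{thm:admissible_invariance}, but the bookkeeping must be carried out carefully.
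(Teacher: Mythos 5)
The paper does not give an explicit proof of this theorem; it says only that the proof is ``essentially the same as for Theorem~\ref{thm:admissible_invariance}.'' Your proposal carries out exactly the adaptation the paper intends, replacing the $\kk$-point argument with fppf descent and the generation of a connected group scheme by a non-empty open subscheme. The outline is sound, and your identification of the scheme-theoretic generation statement as the substitute for the elementary group theory on $\bar\kk$-points is the correct move.

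One misstatement to fix: the fiber of your two-parameter family at a closed point $(v_1, v_2)$ is $T_{L_1 \otimes L_2}(G_0)$, not $T_{L_1}(T_{L_2}(G_0))$. The object $T_{L_1}(T_{L_2}(G_0))$ involves re-lifting $T_{L_2}(G_0)$ to $\Dbcoh(\widetilde Z)$ via $\iota_{\cA,\widetilde Z}$, which is not what appears in $\pi_1^*\iota_{\cA,\widetilde Z}(G_0) \otimes \pi_{12}^*\mathcal L_1 \otimes \pi_{13}^*\mathcal L_2$; that expression literally tensors the fixed lift of $G_0$ by $L_1 \otimes L_2$. So the logical order in your last step should be reversed: use that $T_{L_1}(T_{L_2}(G_0))$ lies in $\cA$ by the definition of $U$, then invoke Lemma~\ref{lem:lifts_and_infinitely_extendable_bundles} to identify the actual fiber $T_{L_1 \otimes L_2}(G_0)$ with it. Also, the worry about a ``family version'' of Lemma~\ref{lem:lifts_and_infinitely_extendable_bundles} is unnecessary: Proposition~\ref{prop:kawatani_okawa_rigidity} already promotes the pointwise condition at closed points to the Zariski-open statement, so the identification only needs to be carried out over each residue field. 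The genuine subtlety there, which you do not name, is whether a $\kappa$-point of the scheme-theoretic image $\Pic_{\mathrm{i.e.}}$ (defined as an intersection of scheme-theoretic images) really yields a line bundle that is infinitely extendable over $\widetilde Z_\kappa$, since taking images need not commute with base change to a residue field; this is invisible over an algebraically closed field (where the paper's Theorem~\ref{thm:admissible_invariance} lives) but would need a word in the non-closed case, e.g.\ by passing to a further extension and descending.
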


The proof is essentially the same as for Theorem~\ref{thm:admissible_invariance}, so we leave it to the interested reader.

In the paper \cite{kawatani-okawa} Kawatani and Okawa proved that if a smooth projective variety $X$ has effective canonical bundle, then for any admissible subcategory $\cA \subset \Dbcoh(X)$ either $\supp(\cA)$ or $\supp(\emptyperp \cA)$ is a subset of the base locus of $K_X$. This theorem has several generalizations, and using the invariance proved in Theorem~\ref{thm:admissible_invariance} we can find another one. The statement is unpleasantly technical, but it will be very useful for proving Theorem~\ref{thm:no_nefness_on_support} below.

\begin{proposition}
  \label{prop:condition_on_support}
  Let $X$ be a smooth projective variety, and let $\cA \subset \Dbcoh(X)$ be an admissible subcategory with $\supp(\cA) = Z \neq X$. Then for any closed point $p \in Z$ there exists an infinitesimal neighborhood $Z \subset \widetilde{Z} \subset X$ such that for any infinitely extendable line bundle~$M \in \Pic^0_{\mathrm{i.e.}}(\widetilde{Z} \backslash X)(\kk)$ in the connected component of the Picard scheme each global section of~$K_X|_{\widetilde{Z}} \otimes M$ on $\widetilde{Z}$ vanishes at the (reduced) point $p \in Z \subset \widetilde{Z}$.
\end{proposition}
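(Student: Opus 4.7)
The plan is to argue by contradiction, following Kawatani--Okawa's strategy for their base locus theorem \cite[Thm.~1.1]{kawatani-okawa} but using Theorem~\ref{thm:admissible_invariance} in place of their $\Pic^0(X)$-invariance. Fix $p \in Z$ and apply Lemma~\ref{lem:supported_subcategories_factor} to obtain a thickening $i\colon \widetilde{Z} \monoarrow X$ and a lifting functor $\iota := \iota_{\cA, \widetilde{Z}}\colon \cA \to \Dbcoh(\widetilde{Z})$ satisfying $i_* \circ \iota \simeq \mathrm{id}_\cA$; I am free to enlarge $\widetilde{Z}$ whenever convenient. Let $A_p \in \cA$ be the left projection of $\OO_p$ to $\cA$; by Lemma~\ref{lem:skyscraper_projections_are_zero_in_k0} the class $[A_p]$ vanishes in $K_0(X)$.

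For $M \in \Pic^0_{\mathrm{i.e.}}(\widetilde{Z}/X)(\kk)$, define $T_M\colon \cA \to \Dbcoh(X)$ by $T_M(A) := i_*(\iota(A) \otimes M)$. Theorem~\ref{thm:admissible_invariance} asserts that $T_M$ factors through $\cA$, and applying that theorem to $M^{-1}$ together with Lemma~\ref{lem:lifts_and_infinitely_extendable_bundles} identifies $T_M \circ T_{M^{-1}}$ with $\mathrm{id}_\cA$; thus $T_M$ is an autoequivalence of $\cA$, and in particular $[T_M(A_p)] = [A_p] = 0$ in $K_0(X)$. Moreover, $\cA$ is stable under the Serre functor $-\otimes K_X[\dim X]$ by \cite{bondal-kapranov}, so $K_X \otimes T_M(A_p)$ also lies in $\cA$.

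Assume for contradiction that a section $s \in H^0(\widetilde{Z}, K_X|_{\widetilde{Z}} \otimes M)$ is non-vanishing at the reduced point $p$. Tensoring the corresponding morphism $\OO_{\widetilde{Z}} \to K_X|_{\widetilde{Z}} \otimes M$ with $\iota(A_p)$ in $\Dbcoh(\widetilde{Z})$, pushing forward via $i_*$, and applying the projection formula yields a morphism inside $\cA$:
$$\alpha_s \colon A_p \to K_X \otimes T_M(A_p).$$
Since $M|_p$ is one-dimensional over $\kk$, the induced morphism on the derived fibers at $p$ is multiplication by the non-zero scalar $s(p)$ (after trivializing $M|_p$ and $K_X|_p$), so $\alpha_s$ is non-zero. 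Composing with the non-zero morphism $\OO_p \to A_p$ from the projection triangle (guaranteed non-zero by Lemma~\ref{lem:probing_support_of_subcategory}), with $\alpha_s$, and with a morphism $T_M(A_p) \to \OO_p[-j_0]$ furnished by Lemma~\ref{lem:probing_support_object}, one produces a non-trivial element of an $\Ext^*(\OO_p, \OO_p)$.

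The main obstacle is the final step: turning this non-vanishing into an actual contradiction. The cleanest route should exploit the Serre duality isomorphism $\Hom(A_p, K_X \otimes T_M(A_p)) \cong \Ext^{\dim X}(T_M(A_p), A_p)^\vee$ together with the Euler characteristic vanishings $\chi(-, A_p) = \chi(-, T_M(A_p)) = 0$, in the style of the Kawatani--Okawa trace argument. Combined with the fact (from Lemma~\ref{lem:skyscraper_projections_are_zero_in_k0}) that $A_p$ has at least two non-zero cohomology sheaves, these constraints should force $\alpha_s$ to vanish at $p$, contradicting $s(p) \neq 0$.
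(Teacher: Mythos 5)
Your setup is reasonable and matches the paper's opening moves (choose $\widetilde{Z}$ and $\iota_{\cA,\widetilde{Z}}$ via Lemma~\ref{lem:supported_subcategories_factor}, assume a section $s$ non-vanishing at $p$, multiply by $s$, and use Theorem~\ref{thm:admissible_invariance} to keep twists inside $\cA$). However, there is a genuine gap, and it is exactly the one you flag as ``the main obstacle.'' Having a non-zero morphism $\alpha_s\colon A_p \to K_X \otimes T_M(A_p)$ between two objects that both lie in $\cA$ carries no contradiction by itself---there is nothing forbidding morphisms within $\cA$, and composing $\OO_p \to A_p \to K_X \otimes T_M(A_p) \to \OO_p[-j_0]$ need not be non-zero (non-zero morphisms do not compose to non-zero morphisms in general), and even if it were, a non-trivial element of some $\Ext^*(\OO_p, \OO_p)$ is no contradiction either, since that ring is large. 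The Euler-characteristic vanishings you invoke give $\chi(T_M(A_p), A_p)=0$, but that never forces individual $\Hom$'s to vanish, and there is no clear way to pass from $K_0$-class information to the statement ``$\alpha_s$ vanishes at $p$.''

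The mechanism you are missing is that the contradiction has to come from semiorthogonality between $\cA$ and $\emptyperp\cA$, not from arithmetic inside $\cA$. The paper's proof works with the \emph{whole} lifted projection triangle $\overline{B_p} \to \OO_p \to \overline{A_p}$, not just $A_p$. Multiplying this triangle by $s\colon M^\vee \to K_X|_{\widetilde{Z}}$ and pushing forward gives a morphism of distinguished triangles; the connecting map from $i_*(\overline{A_p}\otimes M^\vee)$ to $i_*(\overline{B_p}) \otimes K_X[1]$ is killed by Serre duality \emph{plus} the semiorthogonality $\RHom(B_p, \cA) = 0$, using that $i_*(\overline{A_p}\otimes M^\vee) \in \cA$ by Theorem~\ref{thm:admissible_invariance}. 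That vanishing produces a lift $f\colon i_*(\overline{A_p}\otimes M^\vee) \to \OO_p \otimes K_X$, and then one analyzes the composite $\OO_p \to i_*(\overline{A_p}\otimes M^\vee) \xrightarrow{f} \OO_p \otimes K_X$, which is a scalar. If it is an isomorphism, $\OO_p$ splits off an object of $\cA$ and Lemma~\ref{lem:isolated_points_imply_skyscrapers} forces $\supp(\cA)=X$, contradiction; if it is zero, the non-vanishing of $s$ at $p$ forces the projection arrow $\OO_p \to A_p$ to vanish, hence $A_p = 0$, contradiction via Lemma~\ref{lem:probing_support_of_subcategory}. This two-case argument is the heart of the proof and is absent from your proposal.
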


\begin{proof}
  This is a variation on the themes of \cite[Prop.~3.4]{kawatani-okawa} and  \cite[Th.~3.3]{lin-indecomposability}, with the only technical difference being that we do not have an admissible subcategory in $\Dbcoh(\widetilde{Z})$, only a non-full subcategory $\iota_{\cA, \widetilde{Z}}\colon \cA \monoarrow \Dbcoh(\widetilde{Z})$ which becomes admissible after taking the pushforward $i_*\colon \Dbcoh(\widetilde{Z}) \to \Dbcoh(X)$. The proof is conceptually the same, but we have to organize the argument carefully to avoid running into problems with the non-uniqueness of lifts (see Remark~\ref{rem:lifting_not_unique}).

  First, choose some $i\colon \widetilde{Z} \monoarrow X$ and $\iota_{\cA, \widetilde{Z}}\colon \cA \to \Dbcoh(\widetilde{Z})$ as in Lemma~\ref{lem:supported_subcategories_factor}. Consider the projection triangle for the skyscraper sheaf $\OO_{p} \in \Coh(X)$ along the semiorthogonal decomposition $\Dbcoh(X) = \langle \cA, \emptyperp \cA \rangle$:
    \begin{equation}
      \label{eq:condition_on_support_projection_triangle}
      B_p \to \OO_p \to A_p \to B_p [1]
    \end{equation}
  with $A_p \in \cA$ and $B_p \in \emptyperp \cA$. Since all three objects in this triangle are set-theoretically supported on $Z \subset X$, by Corollary~\ref{cor:objects_lift_to_support}, after possibly replacing $\widetilde{Z}$ with a thicker infinitesimal neighborhood of $Z$, we can assume that the triangle lifts to $\widetilde{Z}$, i.e., there exist a distinguished triangle
    \begin{equation}
      \label{eq:condition_on_support_lifted_triangle}
      \overline{B_p} \to \OO_p \to \overline{A_p} \to \overline{B_p}[1]
    \end{equation}
  in $\Dbcoh(\widetilde{Z})$ whose pushforward to $\Dbcoh(X)$ is isomorphic to the projection triangle~\eqref{eq:condition_on_support_projection_triangle} above. Note that replacing the subscheme $\widetilde{Z}$ with its thickening is compatible with the application of Lemma~\ref{lem:supported_subcategories_factor}, that is, we can define the ``new'' $\iota_{\cA, \widetilde{Z}}$ functor by composing the original functor with the pushforward to the thicker infinitesimal neighborhood, and this composition still enjoys all the properties mentioned in Lemma~\ref{lem:supported_subcategories_factor}.

  Now the proof proceeds by contradiction. Assume that $M \in \Pic^0_{\mathrm{i.e.}}(\widetilde{Z} \backslash X)$ is an infinitely extendable line bundle with a section $s \in \Gamma(\widetilde{Z}, K_X|_{\widetilde{Z}} \otimes M)$ which does not vanish on the reduced point $p \in Z \subset \widetilde{Z}$. Using the lift~\eqref{eq:condition_on_support_lifted_triangle} of the projection triangle we can define the morphism of distinguished triangles in $\Dbcoh(X)$ induced by the multiplication with the section $s\colon M^\dual \to K_X|_{\widetilde{Z}}$:

  \begin{equation}
    \label{diag:morphism_of_triangles}
    \begin{tikzcd}[cramped]
	{i_*(\overline{B_p} \otimes M^\dual)} & {i_*(\OO_p \otimes M^\dual)} & {i_*(\overline{A_p} \otimes M^\dual)} & {i_*(\overline{B_p} \otimes M^\dual) [1]} \\
	{i_*(\overline{B_p}) \otimes K_X} & {\OO_{p} \otimes K_X} & {i_*(\overline{A_p}) \otimes K_X} & {i_*(\overline{B_p}) \otimes K_X [1]}
	\arrow[from=1-1, to=1-2]
	\arrow[from=1-1, to=2-1]
	\arrow[from=1-2, to=1-3]
	\arrow[from=1-2, to=2-2]
	\arrow[from=1-3, to=1-4]
	\arrow[from=1-3, to=2-3]
	\arrow[dashed, from=1-3, to=2-4]
	\arrow[from=1-4, to=2-4]
	\arrow[from=2-1, to=2-2]
	\arrow[from=2-2, to=2-3]
	\arrow[from=2-3, to=2-4]
  \end{tikzcd}\end{equation}

  Note that in the bottom row we have used the projection formula to pull out the twist by the canonical line bundle. The dashed arrow, defined by the commutativity of the square, is an element of
  \[ \begin{aligned} \RHom(i_*(\overline{A_p} & \otimes M^\dual), i_*(\overline{B_p}) \otimes K_X [1]) \caniso^{\text{Serre duality on X}} \\
  & \caniso \RHom(i_*(\overline{B_p}), i_*(\overline{A_p} \otimes M^\dual)[n - 1])^\dual \caniso^{\text{by definition}} \\
  & \caniso \RHom(B_p, i_*(\overline{A_p} \otimes M^\dual)[n-1])^\dual . \end{aligned} \]
  Since the line bundle $M$ is infinitely extendable, so is its dual $M^\dual \in \Pic^0(\widetilde{Z})$. Then by the last claim of Theorem~\ref{thm:admissible_invariance} we know that $i_*(\overline{A_p} \otimes M^\dual)$ is an object of the subcategory $\cA$. In particular, the semiorthogonality of $\cA$ and $B_p \in \emptyperp \cA$ implies that the last space in the chain of isomorphisms is zero. Thus, in particular, the dashed arrow is zero.

  Since the bottom row of the diagram~\eqref{diag:morphism_of_triangles} is a distinguished triangle, the vanishing of the dashed arrow implies that the vertical map from $i_*(\overline{A_p} \otimes M^\dual)$ admits a lift:

  \begin{equation}
    \label{diag:lift_of_morphism}
    \begin{tikzcd}[cramped]
	& {i_*(\overline{A_p} \otimes M^\dual)} \\
	{\OO_{p} \otimes K_X} & {i_*(\overline{A_p}) \otimes K_X} & {i_*(\overline{B_p}) \otimes K_X [1]}
	\arrow["f"', dotted, from=1-2, to=2-1]
	\arrow[from=1-2, to=2-2]
	\arrow["0", dashed, from=1-2, to=2-3]
	\arrow[from=2-1, to=2-2]
	\arrow[from=2-2, to=2-3]
  \end{tikzcd}\end{equation}

  Denote the dotted arrow by $f$. Note that the objects $i_*(\OO_p \otimes M^\dual)$ and $\OO_p \otimes K_X$ appearing in the middle of~\eqref{diag:morphism_of_triangles} are both isomorphic to the skyscraper sheaf $\OO_p$. Since $\Hom(\OO_p, \OO_p) \iso \kk \cdot \mathrm{id}$, the composition
    \begin{equation}
      \label{eq:condition_on_support_composition}
      i_*(\OO_p \otimes M^\dual) \to i_*(\overline{A_p} \otimes M^\dual) \xrightarrow{f} \OO_p \otimes K_X
    \end{equation}
  is either an isomorphism or a zero map. We consider these two cases separately.
  
  If the composition~\eqref{eq:condition_on_support_composition} is an isomorphism, then the morphism $f$ provides a splitting for the map $\OO_p \caniso i_*(\OO_p \otimes M^\dual) \to i_*(\overline{A_p} \otimes M^\dual)$, so $\OO_p$ is a direct summand of $i_*(\overline{A_p} \otimes M^\dual)$. As above, since $M^\dual \in \Pic^0_{\mathrm{i.e.}}(\widetilde{Z} \backslash X)$, by Theorem~\ref{thm:admissible_invariance} we know that the object $i_*(\overline{A_p} \otimes M^\dual)$ lies in the subcategory $\cA$. Thus the existence of $f$ shows that $i_*(\overline{A_p} \otimes M^\dual) \in \cA$ has a direct summand isomorphic to the skyscraper sheaf $\OO_p$. Admissible subcategories are closed under taking direct summands, so this implies that $\OO_p \in \cA$. By Lemma~\ref{lem:isolated_points_imply_skyscrapers} the fact that $\cA$ contains a skyscraper sheaf implies that $\supp(\cA) = X$, which is a contradiction with the assumption $\supp(\cA) = Z \neq X$. So this case does not happen.

  If the composition~\eqref{eq:condition_on_support_composition} is zero, then by the commutativity of the triangle~\eqref{diag:morphism_of_triangles} we see that in the commutative square
  \[\begin{tikzcd}[cramped]
	{i_*(\OO_p \otimes M^\dual)} & {i_*(\overline{A_p} \otimes M^\dual)} \\
	{\OO_{p} \otimes K_X} & {i_*(\overline{A_p}) \otimes K_X}
	\arrow[from=1-1, to=1-2]
	\arrow[from=1-1, to=2-1]
	\arrow[from=1-2, to=2-2]
	\arrow[from=2-1, to=2-2]
  \end{tikzcd}\]
  the composed diagonal map $i_*(\OO_p \otimes M^\dual) \to A_p \otimes K_X$ is zero. However, the left vertical morphism is an isomorphism since by assumption the section $s$ does not vanish at $p$. Thus the lower horizontal map~$\OO_p \otimes K_X \to A_p \otimes K_X$ is necessarily zero. This morphism is a twist by~$K_X$ of the map $\OO_p \to A_p$ in the projection triangle~\eqref{eq:condition_on_support_projection_triangle} for the skyscraper sheaf $\OO_p$. By the universal property of projection triangles the map~$\OO_p \to A_p$ can only be zero when~$A_p = 0$ and~$B_p \iso \OO_p$, but by Lemma~\ref{lem:isolated_points_imply_skyscrapers} this implies $p \not\in \supp(\cA)$, which is a contradiction since by assumption we had $p \in Z = \supp(\cA)$.

  Thus we observe that both possible options for the composed morphism~\eqref{eq:condition_on_support_composition} are ruled out. So we have reached a contradiction and hence all global sections $s \in \Gamma(\widetilde{Z}, K_X|_{\widetilde{Z}} \otimes M)$ of an infinitely extendable line bundle $M \in \Pic^0_{\mathrm{i.e.}}(\widetilde{Z} \backslash X)$ in the connected component of the Picard scheme of $\widetilde{Z}$ necessarily vanish at all (reduced) points of $Z$. This finishes the proof of the proposition.
\end{proof}

\section{Invariance property and projections of skyscrapers}
\label{sec:rigidity}

In Section~\ref{sec:invariance} we used the Zariski-openness of admissible subcategories (Proposition~\ref{prop:kawatani_okawa_rigidity}) to prove that an admissible subcategory $\cA \subset \Dbcoh(X)$ supported on a closed subset $Z \subset X$ is, roughly speaking, locally preserved under the action of $\Pic^0(\widetilde{Z})$ for some closed subscheme~$\widetilde{Z} \subset X$. In this section we employ similar arguments to show that some specific objects of $\cA$ are held invariant under this not-quite-action.

In order to do that, we need the following rigidity result for projection triangles which is an immediate consequence of Proposition~\ref{prop:kawatani_okawa_rigidity}. Its statement is more complicated than its proof.

\begin{proposition}
  \label{prop:isotriviality_projection_triangles}
  Let $X$ be a proper scheme, let $F \in \Perf(X)$ be a perfect complex on $X$, and let~$\cA \subset \Perf(X)$ be an admissible subcategory. Let $U$ be a quasi-compact separated scheme, and let
    \begin{equation}
      \label{eq:family_of_projection_triangles}
      \mathcal{E}^\prime \to \pi_1^* F \to \mathcal{E} \to \mathcal{E}^\prime [1]
    \end{equation}
  be a distinguished triangle in $\Perf(X \times U)$. Assume that for some point $u_0 \in U(\kk)$ the restriction of the triangle~\eqref{eq:family_of_projection_triangles} to $X \times \{u_0\}$ is isomorphic to the projection triangle of $F$, i.e., the object $F_{\cA} := \mathcal{E}|_{X \times \{u_0\}}$ lies in $\cA$ and the object $F_{\emptyperp \cA} := \mathcal{E}^\prime|_{X \times \{u_0\}}$ lies in $\emptyperp \cA$. Then, after possibly replacing $U$ with some Zariski-open neighborhood of $u_0$, the triangle~\eqref{eq:family_of_projection_triangles} is isomorphic to the pullback of the triangle
  \[ F_{\emptyperp \cA} \to F \to F_{\cA} \to F_{\emptyperp \cA}[1] \]
  from $X$.
\end{proposition}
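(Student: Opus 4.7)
The plan is to apply Proposition~\ref{prop:kawatani_okawa_rigidity} to each of $\mathcal{E}$ and $\mathcal{E}^\prime$ separately, shrink $U$ to a neighborhood of $u_0$ on which both perfect complexes lie in the relevant relative subcategories, and then conclude using the uniqueness of projection triangles with respect to a relative semiorthogonal pair.

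First, the fiber $\mathcal{E}|_{X \times \{u_0\}}$ is by hypothesis isomorphic to $F_{\cA} \in \cA$, so Proposition~\ref{prop:kawatani_okawa_rigidity} yields a Zariski-open neighborhood $U_1$ of $u_0$ such that $\mathcal{E}|_{X \times U_1}$ belongs to $\cA \boxtimes \Perf(U_1)$. The orthogonal $\emptyperp \cA \subset \Perf(X)$ is also admissible, so the same proposition applied to $\mathcal{E}^\prime$ (whose fiber over $u_0$ equals $F_{\emptyperp \cA} \in \emptyperp \cA$) provides an open $U_2 \ni u_0$ such that $\mathcal{E}^\prime|_{X \times U_2}$ belongs to $\emptyperp \cA \boxtimes \Perf(U_2)$. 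Replacing $U$ by $U_1 \cap U_2$, we may assume both containments hold over all of $U$.

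Next, pulling back the projection triangle of $F$ along $\pi_1\colon X \times U \to X$ produces a second distinguished triangle
\[ \pi_1^* F_{\emptyperp \cA} \to \pi_1^* F \to \pi_1^* F_{\cA} \to \pi_1^* F_{\emptyperp \cA}[1] \]
whose outer terms manifestly belong to $\emptyperp \cA \boxtimes \Perf(U)$ and $\cA \boxtimes \Perf(U)$ respectively. Now both this triangle and the given~\eqref{eq:family_of_projection_triangles} present $\pi_1^* F$ as an extension of an object in $\cA \boxtimes \Perf(U)$ by an object in $\emptyperp \cA \boxtimes \Perf(U)$. Provided the pair $(\emptyperp \cA \boxtimes \Perf(U), \cA \boxtimes \Perf(U))$ is semiorthogonal inside $\Perf(X \times U)$, the standard argument lifts the identity of $\pi_1^* F$ to a morphism of triangles which is automatically an isomorphism, and this gives exactly the desired identification with the pullback of the projection triangle of $F$.

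The main obstacle is therefore the relative semiorthogonality statement, i.e., the vanishing $\RHom_{X \times U}(B, A) = 0$ for $A \in \cA \boxtimes \Perf(U)$ and $B \in \emptyperp \cA \boxtimes \Perf(U)$. By devissage over classical generators it suffices to treat the case $A = A_0 \boxtimes P$, $B = B_0 \boxtimes Q$ with $A_0 \in \cA$, $B_0 \in \emptyperp \cA$ and $P, Q \in \Perf(U)$. For these one has the product decomposition $\intRHom_{X \times U}(B_0 \boxtimes Q, A_0 \boxtimes P) \iso \pi_X^* \intRHom_X(B_0, A_0) \otimes \pi_U^* \intRHom_U(Q, P)$, so the K\"unneth formula combined with properness of $X$ reduces the vanishing of the global $\RHom$ to the fiberwise vanishing $\RHom_X(B_0, A_0) = 0$, which is exactly the original semiorthogonality of $\emptyperp \cA$ and $\cA$ in $\Perf(X)$. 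With this in hand the uniqueness argument of the previous paragraph completes the proof.
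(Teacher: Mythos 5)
Your proof is correct and takes essentially the same approach as the paper: apply Proposition~\ref{prop:kawatani_okawa_rigidity} to both $\mathcal{E}$ and $\mathcal{E}^\prime$, shrink to a common open neighborhood of $u_0$, and then invoke uniqueness of projection triangles for the induced semiorthogonal decomposition of $\Perf(X \times U)$. The only difference is that you explicitly verify the relative semiorthogonality via devissage and K\"unneth, whereas the paper simply cites the existence of the semiorthogonal decomposition $\Perf(X \times U^\prime) = \langle \cA \boxtimes \Perf(U^\prime), \emptyperp \cA \boxtimes \Perf(U^\prime) \rangle$.
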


In particular, the family $\mathcal{E}$ of objects is (locally) isotrivial. This, probably, means that for any admissible subcategory $\cA \subset \Dbcoh(X)$ the projection arrow $F \to F_{\cA}$, viewed as a ``point'' in a moduli ``space'' of objects in the under-category $F \backslash \Perf(X)$, lies in a zero-dimensional ``irreducible component'', but removing the quote symbols may be quite nontrivial (though see the marked versions of stacks constructed in \cite{antieau-elmanto} for something roughly related). We don't use this interpretation.

\begin{proof}
  By Proposition~\ref{prop:kawatani_okawa_rigidity}, applied first to $\mathcal{E}^\prime$ and then to $\mathcal{E}$, there exists a Zariski-open subset~$U^\prime \subset U$ containing the point~$u_0$ such that the object~$\mathcal{E}^{\prime}|_{X \times U^\prime}$ lies in $\emptyperp \cA \boxtimes \Perf(U^\prime)$ and the object $\mathcal{E}|_{X \times U^\prime}$ lies in the subcategory $\cA \boxtimes \Perf(U^\prime)$. Then over $U^\prime$ the distinguished triangle~\eqref{eq:family_of_projection_triangles} is the projection triangle for the object $\pi_1^* F$ and the semiorthogonal decomposition
  \[ \Perf(X \times U^\prime) = \langle \cA \boxtimes \Perf(U^\prime), \emptyperp \cA \boxtimes \Perf(U^\prime) \rangle. \]
  At the same time, the pullback of the triangle
  \[ \pi_1^* F_{\emptyperp \cA} \to \pi_1^* F \to \pi_1^* F_{\cA} \to \pi_1^* F_{\emptyperp \cA} [1] \]
  is also a projection triangle for the same semiorthogonal decomposition. The proposition follows from the uniqueness of projection triangles.
\end{proof}

For us the main impact of Proposition~\ref{prop:isotriviality_projection_triangles} is that one often studies the behavior of an admissible subcategory $\cA \subset \Dbcoh(X)$ by looking at the projections to $\cA$ of the skyscraper sheaves at various points of $X$. Since it's relatively easy to deform a map from a skyscraper sheaf, the rigidity of projection triangles is a strong condition on what kind of object the projection of a skyscraper sheaf can be. In particular, if $\OO_{p} \to A_{p}$ is a left projection of a skyscraper sheaf, all deformations of $A_{p}$ which are locally trivial near $p$ are globally trivial in the following precise sense:

\begin{lemma}
  \label{lem:projections_rigid_near_support}
  Let $X$ be a proper scheme, let $F \in \Perf(X)$ be an object, and let $\cA \subset \Perf(X)$ be an admissible subcategory. Let $U$ be a quasi-compact separated scheme, and let $\mathcal{E} \in \Perf(X \times U)$ be a perfect complex. Assume that for some point $u_0 \in U(\kk)$ the restriction $E := \mathcal{E}|_{X \times \{u_0\}}$ is isomorphic to the left projection of $F$ to $\cA$, i.e., $E$ lies in $\cA$ and there exists a morphism~$F \to E$ whose cone lies in~$\emptyperp \cA$. Assume, additionally, that there exists a Zariski-open subset~$W \subset X$ containing~$\supp(F)$ such that~$\mathcal{E}|_{W \times U}$ is isomorphic to~$\pi_1^*(E)|_{W \times U}$. Then, after possibly replacing $U$ with some Zariski-open neighborhood of $u_0$, the object~$\mathcal{E}$ is isomorphic to~$\pi_1^*E$.
\end{lemma}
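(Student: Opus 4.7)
The plan is to extend the projection morphism $p\colon F \to E$ to a morphism $\pi_1^*F \to \mathcal{E}$ on $X \times U$, complete this to a distinguished triangle, and then invoke Proposition~\ref{prop:isotriviality_projection_triangles} to conclude that the family is the pullback from $X$, so $\mathcal{E} \iso \pi_1^*E$ after shrinking $U$.

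The key ingredient is a Hom computation. Since $\supp(\pi_1^*F) = \supp(F) \times U$ lies in the open subset $W \times U \subset X \times U$, the sheaf $\intRHom(\pi_1^*F, \mathcal{E})$ vanishes on the complement, which gives a restriction isomorphism $\RHom_{X \times U}(\pi_1^*F, \mathcal{E}) \isoarrow \RHom_{W \times U}(\pi_1^*F|_{W \times U}, \mathcal{E}|_{W \times U})$. Combining this with the hypothesis $\mathcal{E}|_{W \times U} \iso \pi_1^*(E|_W)$ and flat base change along $\pi_2\colon W \times U \to U$, together with the same support isomorphism once more on $X$ itself, we obtain $\RHom_{X \times U}(\pi_1^*F, \mathcal{E}) \iso \RHom_X(F, E) \otimes_\kk \RGamma(U, \OO_U)$. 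Taking $H^0$, every morphism $F \to E$ extends to a morphism $\pi_1^*F \to \mathcal{E}$ on $X \times U$.

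To apply Proposition~\ref{prop:isotriviality_projection_triangles} we need the restriction at $u_0$ of our extension to agree, via the given identification $\alpha\colon \mathcal{E}|_{X \times \{u_0\}} \iso E$, with the projection $p$. The composition $\Hom(\pi_1^*F, \mathcal{E}) \to \Hom(F, \mathcal{E}|_{X \times \{u_0\}}) \xrightarrow{\alpha \circ -} \Hom(F, E)$, traced through the isomorphisms above, becomes multiplication by an automorphism $\delta$ of $E|_W$ measuring the discrepancy between $\alpha$ and the given identification $\mathcal{E}|_{W \times U} \iso \pi_1^*(E|_W)$ at the point $u_0$. Since $\delta$ is invertible, we can take the extension $\tilde\xi\colon \pi_1^*F \to \mathcal{E}$ of the morphism $\xi \in \Hom(F, E)$ corresponding, under the support isomorphism $\Hom(F, E) \iso \Hom(F|_W, E|_W)$, to $\delta^{-1} \circ p|_W$; then $\alpha \circ \tilde\xi|_{X \times \{u_0\}}$ equals $p|_W$ on $W$, and hence equals $p$ globally by the support isomorphism.

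Completing $\tilde\xi$ to a distinguished triangle $\mathcal{E}' \to \pi_1^*F \to \mathcal{E} \to \mathcal{E}'[1]$ in $\Perf(X \times U)$ produces a family whose restriction at $u_0$ is isomorphic to the projection triangle $F_{\emptyperp \cA} \to F \to E \to F_{\emptyperp \cA}[1]$. Proposition~\ref{prop:isotriviality_projection_triangles} then gives, after shrinking $U$ to a Zariski-open neighborhood of $u_0$, an isomorphism between this triangle and the pullback of the projection triangle from $X$, so in particular $\mathcal{E} \iso \pi_1^*E$ on this neighborhood. The main delicate point is the tracking of identifications in the third paragraph: one must verify that $\alpha \circ \tilde\xi|_{X \times \{u_0\}}$ is literally the projection morphism, not merely a morphism whose cone is isomorphic to $F_{\emptyperp \cA}|_W$ on $W$, because the comparison automorphism $\delta$ need not extend to an automorphism of $E$ globally.
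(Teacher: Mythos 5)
Your argument is correct and follows the same route as the paper: produce a morphism $\pi_1^*F \to \mathcal{E}$ extending the projection unit $p\colon F \to E$, using the fact that $\Hom_{X\times U}(\pi_1^*F, \mathcal{E}) \iso \Hom_{W\times U}(\pi_1^*F|_{W\times U}, \mathcal{E}|_{W\times U})$ together with the given trivialization of $\mathcal{E}$ over $W\times U$; complete it to a distinguished triangle; and apply Proposition~\ref{prop:isotriviality_projection_triangles}. Your detour through the K\"unneth-type identity $\RHom_{X\times U}(\pi_1^*F,\mathcal{E}) \iso \RHom_X(F,E)\otimes_\kk \RGamma(U,\OO_U)$ is a slightly heavier way to get the extension than the paper's direct chain of Hom-isomorphisms, but both give the same thing.

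The one substantive thing you add is the third paragraph, and it is warranted. The paper invokes Proposition~\ref{prop:isotriviality_projection_triangles} without verifying that the cone at $u_0$ actually lands in $\emptyperp\cA$. As you point out, the restricted morphism carried back to a morphism $F \to E$ via $\alpha$ is a priori only known to agree with $\delta\circ p$ over $W$, where $\delta\in\Aut(E|_W)$ is the discrepancy between $\alpha|_W$ and the fiber at $u_0$ of the trivialization $\beta\colon \pi_1^*(E|_W)\isoarrow\mathcal{E}|_{W\times U}$; since $\delta$ need not lift to $\Aut(E)$, the cone of the resulting morphism is not obviously in $\emptyperp\cA$. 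Your correction (extend $\delta^{-1}\circ p|_W$ rather than $p|_W$) resolves this cleanly, and you argue correctly that the result then literally equals $p$ via the support isomorphism. An equivalent repair, closer in spirit to the paper's wording, is to replace $\beta$ by $\beta\circ\pi_1^*(\delta^{-1})$, so that its restriction at $u_0$ becomes $(\alpha|_W)^{-1}$, after which the paper's chain of isomorphisms sends $p$ to a morphism whose fiber at $u_0$ is $p$ on the nose. Either way, this step is needed, and the paper's terse proof does not spell it out.
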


\begin{proof}
  Let $j\colon W \monoarrow X$ be the open embedding. Since $W$ contains the support of $F$, to define a morphism $\pi_1^* F \to \mathcal{E}$ on $X \times U$ it suffices to look only at the open set $W \times U$. By assumption the restriction~$\mathcal{E}|_{W \times U}$ is isomorphic to the pullback of $E$, so we can follow a chain of homomorphisms
  \[ \begin{aligned} \Hom_{X}(F, E) \iso \Hom_{W}(j^*F, j^*E) & \xrightarrow{\pi_1^*} \Hom_{W \times U}(\pi_1^* j^* F, \pi_1^* j^* E) \iso \\
  & \iso \Hom_{W \times U}(\pi_1^* j^* F, \mathcal{E}|_{W \times U}) \caniso \Hom_{X \times U}(\pi_1^*F, \mathcal{E}) \end{aligned} \]
  to get a morphism $\pi_1^* F \to \mathcal{E}$ and fit it into a distinguished triangle
  \[ \mathcal{E}^\prime \to \pi_1^* F \to \mathcal{E} \to \mathcal{E}^\prime [1], \]
  where $\mathcal{E}^\prime$ is (the shift of) the cone of the map $\pi_1^*F \to \mathcal{E}$. But then we are in the situation of Proposition~\ref{prop:isotriviality_projection_triangles}, thus $\mathcal{E}$ is locally isotrivial.
\end{proof}

The families of objects considered in Lemma~\ref{lem:twists_locally_remain_in_admissible} (or Proposition~\ref{prop:twists_locally_remain_in_admissible}) are locally trivial around any point. Thus we obtain the following invariance result:

\begin{proposition}
  \label{prop:skyscraper_projections_invariance}
  Let $X$ be a smooth proper variety, and let $\cA \subset \Dbcoh(X)$ be an admissible subcategory. Consider the projection triangle of a skyscraper sheaf at a point $p \in X$:
  \[ B_p \to \OO_p \to A_p \to B_p [1], \]
  where $A_p \in \cA$ and $B_p \in \emptyperp \cA$. Assume that $A_p$ is isomorphic to $i_* E$, where $i\colon \widetilde{Z} \monoarrow X$ is a closed subscheme and $E \in \Dbcoh(\widetilde{Z})$. Then there exists a Zariski-open subset $U \subset \Pic^0(\widetilde{Z})$ containing the closed point $\OO_{\widetilde{Z}} \in \Pic^0(\widetilde{Z})(\kk)$ such that for any line bundle $L \in U(\kk)$ the object $i_*(E \otimes L)$ is isomorphic to $A_p$. Moreover, each cohomology sheaf $\mathcal{H}^{m}(A_p) \iso i_* \mathcal{H}^{m}(E)$ is $\Pic^0(\widetilde{Z})$-invariant.
\end{proposition}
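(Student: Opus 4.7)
The plan is to deform $A_p$ inside a family parametrized (via an fppf cover) by a neighborhood of $\OO_{\widetilde{Z}}$ in $\Pic^0(\widetilde{Z})$, match that family to the constant family $\pi_1^* A_p$ by applying the rigidity of projection triangles near the support of the skyscraper $\OO_p$, and then bootstrap the resulting local invariance of cohomology sheaves into global $\Pic^0$-invariance by a standard algebraic group argument.

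First I would invoke Lemma~\ref{lem:poincare_bundle_fppf} to produce an fppf morphism $f\colon V \to \Pic^0(\widetilde{Z})$ whose image contains $\OO_{\widetilde{Z}} \in \Pic^0(\widetilde{Z})(\kk)$, together with a line bundle $\mathcal{L}$ on $\widetilde{Z} \times V$ universal for $f$, and pick a $\kk$-point $v_0 \in V$ over $\OO_{\widetilde{Z}}$. Set
\[ \mathcal{E} := (i \times \mathrm{id}_V)_*(\pi_{\widetilde{Z}}^*(E) \otimes \mathcal{L}) \in \Dbcoh(X \times V). \]
By the perfectness argument already carried out inside the proof of Lemma~\ref{lem:twists_locally_remain_in_admissible} (local triviality of $\mathcal{L}$ plus perfectness of $i_* E = A_p$ on the smooth $X$), the object $\mathcal{E}$ lies in $\Perf(X \times V)$, and by construction $\mathcal{E}|_{X \times \{v_0\}} \iso A_p$.

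The second step is to establish the local triviality hypothesis needed for Lemma~\ref{lem:projections_rigid_near_support}. Since $\mathcal{L}$ restricts to the trivial bundle on $\widetilde{Z} \times \{v_0\}$, it is trivial in an open neighborhood of $(p, v_0)$ in $\widetilde{Z} \times V$; because product opens are a basis, after shrinking I obtain opens $W \subset X$ with $p \in W$ and $V' \subset V$ with $v_0 \in V'$ such that $\mathcal{L}|_{(W \cap \widetilde{Z}) \times V'}$ is trivial. The projection formula then identifies $\mathcal{E}|_{W \times V'}$ with $\pi_1^*(A_p)|_{W \times V'}$. With this in hand, Lemma~\ref{lem:projections_rigid_near_support} applied to the perfect complex $F = \OO_p$, the admissible subcategory $\cA$, and the family $\mathcal{E}|_{X \times V'}$ guarantees, after possibly shrinking $V'$ further, a global isomorphism $\mathcal{E}|_{X \times V'} \iso \pi_1^*(A_p)$. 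Because fppf maps are open, $U := f(V') \subset \Pic^0(\widetilde{Z})$ is a Zariski-open containing $\OO_{\widetilde{Z}}$, and by the universal property of $\mathcal{L}$ the restriction of $\mathcal{E}$ at any $v \in V'(\kk)$ gives $i_*(E \otimes f(v)) \iso A_p$, which is the first claim.

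For the cohomology-sheaf statement, exactness of $i_*$ and of tensoring with a line bundle turn the isomorphism $i_*(E \otimes L) \iso A_p$ into $i_*(\mathcal{H}^m(E) \otimes L) \iso \mathcal{H}^m(A_p)$ for each $m$ and each $L \in U(\kk)$, and full faithfulness of $i_*$ on coherent sheaves upgrades this to $\mathcal{H}^m(E) \otimes L \iso \mathcal{H}^m(E)$ on $\widetilde{Z}$. The assignment $L \mapsto \mathcal{H}^m(E) \otimes L$ is a genuine action of $\Pic^0(\widetilde{Z})(\kk)$ on isomorphism classes of coherent sheaves on $\widetilde{Z}$, so the stabilizer $H$ of $\mathcal{H}^m(E)$ is a subgroup. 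It contains the non-empty open $U(\kk)$, and $\Pic^0(\widetilde{Z})$ is an irreducible group scheme, so for any $g \in \Pic^0(\widetilde{Z})(\kk)$ the opens $U$ and $gU$ must meet, giving $g \in U \cdot U^{-1} \subset H$. Thus $H = \Pic^0(\widetilde{Z})(\kk)$, which is the asserted invariance.

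The main obstacle in this plan is arranging the trivialization of $\mathcal{L}$ on a product-shaped open around $(p, v_0)$ so that Lemma~\ref{lem:projections_rigid_near_support} applies; this is ultimately a Zariski-topology observation, but it is what genuinely bridges the merely local-in-$V$ twist invariance of the object level with the full rigidity of the projection triangle. Notice that unlike the main invariance Theorem~\ref{thm:admissible_invariance}, no hypothesis of infinite extendability on $L$ enters here, because we only demand that the twist recovers the \emph{same} object $A_p$, rather than track compatibility with any specific choice of lift.
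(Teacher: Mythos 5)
Your overall plan matches the paper's, and the two final steps (applying the rigidity of projection triangles, and then the subgroup argument for $\Pic^0$-invariance of cohomology sheaves via conservativity of $i_*$ on $\Coh$) are correct as you describe them. The gap is in the step you yourself flag as "the main obstacle": your claim that \emph{product opens are a basis} for the Zariski topology on $\widetilde{Z} \times V$ is false, and with it the conclusion that $\mathcal{L}$ trivializes on a product open $(W \cap \widetilde{Z}) \times V'$ around $(p, v_0)$ fails. Concretely, take $\widetilde{Z} = V = E$ an elliptic curve over $\CC$, $p = v_0 = e_0$ the identity, and $\mathcal{L}$ the Poincaré bundle rigidified at $e_0$. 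If $\mathcal{L}|_{W' \times V'}$ were trivial for opens $W' \ni e_0$, $V' \ni e_0$, then for each $x \in V'(\CC)$ the degree-zero bundle $\mathcal{L}|_{W' \times \{x\}} = \OO_E(x - e_0)|_{W'}$ would be trivial, i.e.\ $\OO_E(x - e_0)$ would lie in the kernel of $\Pic(E) \to \Pic(W')$, which is the finitely generated subgroup spanned by the points of $E \setminus W'$. But as $x$ ranges over the uncountable set $V'(\CC)$, the injective map $x \mapsto \OO_E(x - e_0)$ cannot land in a countable subgroup — contradiction. So no such product trivialization exists, and your reduction to Lemma~\ref{lem:projections_rigid_near_support}'s local-triviality hypothesis does not go through.

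The paper circumvents this entirely by exploiting the fact that $F = \OO_p$ has support a single reduced point: it \emph{rigidifies} $\mathcal{L}$ at $p$, i.e.\ replaces $\mathcal{L}$ by $\mathcal{L} \otimes \pi_2^*(\mathcal{L}^\dual|_{\{p\} \times V})$ (which does not change fiberwise isomorphism classes) so that $\mathcal{L}|_{\{p\} \times V} \cong \OO_V$ globally over $V$. This is a trivialization on $\{p\} \times V$ — strictly weaker than a product-open trivialization — and it suffices, because the projection formula along $\{p\} \hookrightarrow \widetilde{Z}$ then gives $\pi_1^* i^* \OO_p \otimes \mathcal{L} \cong \pi_1^* i^* \OO_p$, hence a morphism $\pi_1^* \OO_p \to (i \times \mathrm{id}_V)_*(\pi_1^* E \otimes \mathcal{L})$ over all of $V$ that restricts at any fiber over $\OO_{\widetilde{Z}}$ to (a scalar of) the projection map $\OO_p \to A_p$. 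Taking the cone and applying Proposition~\ref{prop:isotriviality_projection_triangles} then yields the isotriviality. Your proof would be repaired by replacing the product-open trivialization step with this rigidification argument; the rest of your write-up, including the remark that no infinite-extendability hypothesis is needed here, is fine.
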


\begin{proof}
  The case where $p \not\in \widetilde{Z}$ is trivial since it may only happen when $A_p = 0$, so assume that~$p$ lies in~$\widetilde{Z}$. As in the proof of Lemma~\ref{lem:twists_locally_remain_in_admissible}, consider an fppf morphism $f\colon V \to \Pic^0(\widetilde{Z})$ whose image contains $\OO_{\widetilde{Z}} \in \Pic^0(\widetilde{Z})(\kk)$, with a universal line bundle $\mathcal{L}$ on $\widetilde{Z} \times V$. As in~\cite[Lem.~9.2.9]{kleiman-picard}, after twisting $\mathcal{L}$ by the pullback of a line bundle $\mathcal{L}^\dual|_{\{p\} \times V}$ we can assume that~$\mathcal{L}$ is equipped with a \textit{rigidification at $p$}, i.e. an isomorphism $u\colon \mathcal{L}|_{\{p\} \times V} \to \OO_V$. By the projection formula the map $u$ induces an isomorphism
  \[ \pi_1^* i^* \OO_p \otimes \mathcal{L} \iso \pi_1^* i^* \OO_p \]
  of objects in $\Dbcoh(\widetilde{Z} \times V)$.

  This, in turn, lets us spread out the morphism $\OO_p \to A_p \iso i_* E$ from the projection triangle to the family $i_*(\pi_1^*E \otimes \mathcal{L})$ in the following way:
  \[ \begin{aligned} \Hom_{X}(\OO_p, i_* E) \caniso \Hom_{\widetilde{Z}}(i^* \OO_p, E) \xrightarrow{\pi_1^*} \Hom_{\widetilde{Z} \times V}(\pi_1^* i^* \OO_p, \pi_1^* E) \xrightarrow{\otimes \mathcal{L}} \\
  \to \Hom_{\widetilde{Z} \times V}(\pi_1^* i^* \OO_p \otimes \mathcal{L}, \pi_1^* E \otimes \mathcal{L}) \iso^{u} \Hom_{\widetilde{Z} \times V}(\pi_1^* i^* \OO_p, \pi_1^* E \otimes \mathcal{L}) \isoarrow \\
  \isoarrow \Hom_{X \times V}(\pi_1^* \OO_p, (i \times \mathrm{id}_{V})_*(\pi_1^*E \otimes \mathcal{L})). \end{aligned} \]

  Now we have a morphism $\pi_1^* \OO_p \to (i \times \mathrm{id}_{V})_*(\pi_1^*E \otimes \mathcal{L})$ which specializes to the (scalar multiple of the) morphism $\OO_p \to A_p$ from the projection triangle at any point of $f^{-1}(\{\OO_{\widetilde{Z}}\})$, so by Lemma~\ref{lem:projections_rigid_near_support} we conclude that, after possibly shrinking $V$, there is an isomorphism 
    \begin{equation}
      \label{eq:rigidity_of_skyscraper_projections}
      (i \times \mathrm{id}_{V})_*(\pi_1^* E \otimes \mathcal{L}) \iso \pi_1^* A_p .
    \end{equation} 

  Thus for any line bundle $L \in \Pic^0(\widetilde{Z})(\kk)$ in the image of the morphism $f$ we have an isomorphism $i_*(E \otimes L) \iso A_p$, as claimed in the statement. 
  
  It remains to deduce the full $\Pic^0$-invariance for cohomology sheaves. Since the functor $i_*$ is exact, the isomorphism proved above shows that for any cohomology sheaf $\mathcal{H} := \mathcal{H}^{m}(E)$ the pushforward $i_*(\mathcal{H} \otimes L)$ is isomorphic to $i_* \mathcal{H}$ when $L$ lies in a non-empty Zariski-open subset of $\Pic^0(\widetilde{Z})$. Moreover, on coherent sheaves the direct image $i_*\colon \Coh(\widetilde{Z}) \to \Coh(X)$ is conservative, so in fact $\mathcal{H} \otimes L$ is isomorphic to $\mathcal{H}$ in $\Coh(\widetilde{Z})$. The group $\Pic^0(\widetilde{Z})$ acts on the category $\Coh(\widetilde{Z})$, so the stabilizer of the coherent sheaf $\mathcal{H}$ is a subgroup in $\Pic^0(\widetilde{Z})$. In a connected group scheme any subgroup containing a Zariski-open subset is necessarily equal to the whole group.
\end{proof}

\begin{remark}
  \label{rem:partial_equivariance}
  It would be interesting to understand under which conditions the cohomology sheaves $\mathcal{H}^{m}(E)$ are not just $\Pic^0(\widetilde{Z})$-invariant, but admit a structure of a $\Pic^0(\widetilde{Z})$-equivariant coherent sheaf on $\widetilde{Z}$. For example, if~$\Pic^0(\widetilde{Z})$ admits a globally defined universal line bundle~$\mathcal{P}$ on $\widetilde{Z} \times \Pic^0(\widetilde{Z})$ and its rigidification~$u\colon \mathcal{P}|_{\{p\} \times \Pic^0(\widetilde{Z})} \isoarrow \OO_{\Pic^0(\widetilde{Z})}$, considered as a choice of a non-zero vector in the fiber over $p$ for each line bundle in $\Pic^0$, is compatible with the group structure on $\Pic^0(\widetilde{Z})$ in the sense that the tensor product morphism
    \begin{equation}
      \label{eq:rigidification_group_homomorphism}
      L|_p \otimes L^\prime|_p \to (L \otimes L^\prime)|_p
    \end{equation}
  sends the tensor product of chosen vectors for $L$ and $L^\prime$ to the chosen vector for $L \otimes L^\prime$, then the isomorphisms $\phi_L\colon \mathcal{H} \otimes L \to \mathcal{H}$ induced from~\eqref{eq:rigidity_of_skyscraper_projections} for each $L$ in a Zariski-open neighborhood $U$ of the identity in $\Pic^0(\widetilde{Z})$ satisfy the cocycle condition $\phi_{L^\prime} \circ (\phi_L \otimes L^\prime) = \phi_{L \otimes L^\prime}$ whenever $L$, $L^\prime$, and $L \otimes L^\prime$ are in $U \subset \Pic^0(\widetilde{Z})$. This collection of isomorphisms is a partially defined $\Pic^0(\widetilde{Z})$-equivariance structure on $\mathcal{H}$ and I expect that it can be canonically extended to the whole $\Pic^0(\widetilde{Z})$ using the fact that $U$ generates $\Pic^0(\widetilde{Z})$. I also expect that the existence of a universal line bundle on $\widetilde{Z} \times \Pic^0(\widetilde{Z})$ is not relevant to this argument, but the gerbiness of the condition~\eqref{eq:rigidification_group_homomorphism} makes me confused.
\end{remark}

We can use this to prove the following corollary which is especially interesting when~$X$ is a surface.

\begin{theorem}
  \label{thm:rational_curves_only}
  Let $X$ be a smooth proper variety over an algebraically closed field $\kk$, and let~$\cA \subset \Dbcoh(X)$ be an admissible subcategory. Denote by $Z$ the support $\supp(\cA)$. If $C \subset Z$ is an irreducible component with $\dim C = 1$, then $C$ has geometric genus $0$, i.e., it is a rational curve.
\end{theorem}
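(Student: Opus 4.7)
The plan is to examine the projection $A_p \in \cA$ of a skyscraper sheaf $\OO_p$ at a carefully chosen point $p \in C$, use the $\Pic^0$-invariance from Proposition~\ref{prop:skyscraper_projections_invariance} to show that the cohomology sheaves of $A_p$ become torsion when restricted to $C$, and conclude that $p$ is an isolated point of $\supp(A_p)$, so that Lemma~\ref{lem:isolated_points_imply_skyscrapers} yields the contradiction $\supp(\cA) = X$.

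Choose a closed point $p \in C$ that lies on no other irreducible component of $Z$ and at which $C$ is smooth (possible since the smooth locus of $C$ is dense open in $C$ and the intersections of $C$ with the other components of $Z$ are finite), and pick a Zariski-open $U \subset X$ with $U \cap Z = U \cap C$. Applying Lemma~\ref{lem:supported_subcategories_factor}, fix $i\colon \widetilde{Z} \monoarrow X$ and $\iota_{\cA, \widetilde{Z}}\colon \cA \to \Dbcoh(\widetilde{Z})$, and write the left projection of $\OO_p$ to $\cA$ as $A_p \iso i_* E$ with $E \in \Dbcoh(\widetilde{Z})$; set $\mathcal{F}^m := \mathcal{H}^m(E) \in \Coh(\widetilde{Z})$. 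By Lemma~\ref{lem:probing_support_of_subcategory} we have $p \in \supp(A_p)$, and by Proposition~\ref{prop:skyscraper_projections_invariance} each $\mathcal{F}^m$ satisfies $\mathcal{F}^m \otimes L \iso \mathcal{F}^m$ for every $L \in \Pic^0(\widetilde{Z})(\kk)$.

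Suppose for contradiction that the normalization $\nu\colon \bar C \to C$ has genus $g(\bar C) \geq 1$. Lemma~\ref{lem:line_bundles_on_component_curves}, combined with the surjectivity of $\nu^*\colon \Pic(C) \to \Pic(\bar C)$, yields a surjection $\Pic(\widetilde{Z})(\kk) \epiarrow \Pic(\bar C)(\kk)$; using that $\Pic(\bar C)$ is smooth, a standard argument about morphisms of group schemes descends this to a surjection on identity components $\Pic^0(\widetilde{Z})(\kk) \epiarrow \Pic^0(\bar C)(\kk)$. Consequently each $\nu^*(\mathcal{F}^m|_C) \in \Coh(\bar C)$ is $\Pic^0(\bar C)$-invariant. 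By the classical determinant argument --- invariance $\mathcal{G} \otimes L \iso \mathcal{G}$ forces $L^{\otimes \rk \mathcal{G}} \iso \OO_{\bar C}$ for every $L \in \Pic^0(\bar C)$, which is impossible on a positive-dimensional abelian variety unless $\rk \mathcal{G} = 0$ --- each $\nu^*(\mathcal{F}^m|_C)$ has generic rank zero. Since $\nu$ is birational, $\mathcal{F}^m|_C$ is generically zero on $C$ as well, and then Nakayama in the Artinian local ring $\OO_{\widetilde{Z}, \eta_C}$ at the generic point of $C$ gives $\eta_C \notin \supp(\mathcal{F}^m)$; thus $\supp(\mathcal{F}^m) \cap C$ is finite for each $m$, and so is $\supp(A_p) \cap C$. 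After shrinking $U$ further to exclude the other finitely many such points, $\supp(A_p) \cap U = \{p\}$, so $p$ is an isolated point of $\supp(A_p)$. Since $A_p \in \cA$, Lemma~\ref{lem:isolated_points_imply_skyscrapers} then forces $\supp(\cA) = X$, contradicting $Z \neq X$. The most delicate step is the descent of Picard-group surjectivity to identity components, which requires a careful treatment of Picard schemes of non-reduced thickenings.
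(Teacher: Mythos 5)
Your proof is correct and follows the paper's strategy closely: project the skyscraper sheaf $\OO_p$, apply Proposition~\ref{prop:skyscraper_projections_invariance} to get $\Pic^0(\widetilde{Z})$-invariance of the cohomology sheaves of a lift of $A_p$, run a determinant argument on a positive-genus curve to force generic vanishing, and conclude via Lemma~\ref{lem:isolated_points_imply_skyscrapers}. The one structural difference: you keep $C$ singular and route through the normalization $\nu\colon \bar C \to C$, using surjectivity of $\nu^*\colon \Pic(C) \to \Pic(\bar C)$, whereas the paper first blows up points of $X$ to make the proper transform of $C$ smooth and transports $\cA$ via the fully faithful derived pullback along the blow-up; on the blown-up variety the determinant step is then precisely Lemma~\ref{lem:pic_invariant_sheaves_on_smooth_curves} applied to a smooth $C$. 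Both routes work; yours trades a geometric reduction for the cohomological fact ($H^1$ of the finite-support quotient $\nu_*\OO_{\bar C}^*/\OO_C^*$ vanishes) that $\nu^*$ is surjective on Picard groups.

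The step you flag as delicate --- descending $\Pic(\widetilde{Z})(\kk) \twoheadrightarrow \Pic(\bar C)(\kk)$ to a surjection on identity components --- is indeed the one place where both your argument and the paper's rely on more than the bare statement of Lemma~\ref{lem:line_bundles_on_component_curves}. The honest justification is that the gluing construction in that lemma's proof produces extensions which are trivial off $C$ and degree-preserving along $C$, and can be carried out in algebraic families; hence degree-zero line bundles on $C$ extend inside the identity component of $\Pic(\widetilde{Z})$. Your appeal to ``a standard argument about morphisms of group schemes'' glosses over this and, as stated, is not quite a proof (surjectivity on $\kk$-points of non-finite-type group schemes does not formally descend to identity components without extra input). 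Spelling out the degree-preservation, or a family version of the gluing, would tighten the step, but this is a fillable detail rather than an error, and the rest of your argument is sound.
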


\begin{proof}
  We can perform a sequence of blow-ups of points in $X$ so that the proper transform of $C$ becomes smooth (works in any characteristic since $C$ is just a curve). Since the derived pullback along a blow-up of a point is a fully faithful functor (see, e.g.,~\cite{orlov-blowups}), the subcategory $\cA$ embeds into the derived category of the blow-up, with the support of that copy of $\cA$ being the full preimage of $Z \subset X$, which in particular contains the normalization of $C$ as one of the irreducible components. Thus we can assume that $C$ is smooth, and in this case we need to show that $C$ has genus zero. We prove this by assuming that $g(C) > 0$ and obtaining a contradiction.

  Denote by $Z^\prime$ the Zariski closure of the complement $Z \backslash C$, i.e., the union of all irreducible components of $Z$ except $C$. Let $p \in C$ be a point which does not lie in $Z^\prime$ and consider the projection triangle for the skyscraper sheaf
    \begin{equation}
      \label{eq:rational_curves_skyscraper}
      B_p \to \OO_p \to A_p \to B_p [1]
    \end{equation}
  with $A_p \in \cA$ and $B_p \in \emptyperp \cA$. By Corollary~\ref{cor:objects_lift_to_support} there exists some closed subscheme $i\colon \widetilde{Z} \monoarrow X$ such that $A_p \iso i_*(E)$ for some $E \in \Dbcoh(\widetilde{Z})$. By Proposition~\ref{prop:skyscraper_projections_invariance} we know that all cohomology sheaves of $E$ are invariant under the action of $\Pic^0(\widetilde{Z})$. Let $\mathcal{H} := \mathcal{H}^{m}(E) \in \Coh(\widetilde{Z})$ be one of the cohomology sheaves. Consider the non-derived restriction $\mathcal{F} \in \Coh(C)$ of $\mathcal{H}$ along the inclusion $C \monoarrow \widetilde{Z}$. Clearly the sheaf $\mathcal{F}$ is invariant under the twists by the line bundles in the image of the pullback $\Pic^0(\widetilde{Z}) \to \Pic^0(C)$. However, as we proved in Lemma~\ref{lem:line_bundles_on_component_curves}, the pullback map of Picard schemes is surjective in this case: indeed, the lemma shows that any line bundle on $C$ can be extended to an arbitrary infinitesimal neighborhood of $Z = (\widetilde{Z})_{\mathrm{red}}$, which is exactly what we claim here.

  Thus $\mathcal{F}$ is a $\Pic^0(C)$-invariant coherent sheaf on the smooth proper curve $C$. When the curve $C$ has positive genus, by Lemma~\ref{lem:pic_invariant_sheaves_on_smooth_curves} below this happens only when $\mathcal{F}$ is supported on finitely many points of $C$. Note that $\supp(\mathcal{F}) = \supp(\mathcal{H}) \cap C$. Since the finiteness of the set~$\supp(\mathcal{H}) \cap C$ holds for each of the finitely many nonzero cohomology sheaves of~$E \in \Dbcoh(\widetilde{Z})$, we get that $\supp(E) \cap C$ is a finite set of points.  Recall that the point~$p$ was chosen to be in~$C$, but not in~$Z^\prime$. Thus either~$p$ does not lie in the support of $E$ at all, or it is an isolated point of $\supp(E)$. The first option is ruled out by Lemma~\ref{lem:probing_support_of_subcategory} since by assumption $p \in Z$ is a point in the support of $\cA$. The second option leads to a contradiction by Lemma~\ref{lem:isolated_points_imply_skyscrapers}: indeed, if $E \in \cA$ is an object whose support contains $p$ as an isolated point, then $W := \supp(\emptyperp \cA)$ is a closed (Lemma~\ref{lem:support_of_subcategory_is_closed}) subset of $X$ which does not contain $p$. But then both~$Z = \supp(\cA)$ and~$W = \supp(\emptyperp \cA)$ are proper closed subsets of~$X$, which cannot happen by Corollary~\ref{cor:sod_has_full_support}.
\end{proof}

In the proof above we appealed to the following standard fact:

\begin{lemma}
  \label{lem:pic_invariant_sheaves_on_smooth_curves}
  Let $C$ be a smooth proper curve of positive genus over an algebraically closed field $\kk$. If a coherent sheaf $\mathcal{F} \in \Coh(C)$ is invariant under the action of $\Pic^0(C)$, then $\mathcal{F}$ has zero-dimensional support.
\end{lemma}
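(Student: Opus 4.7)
The plan is to split $\mathcal{F}$ into its torsion part and its torsion-free part, observe that the torsion part automatically has zero-dimensional support on a smooth curve, and then derive a contradiction from the existence of a nonzero torsion-free part using the fact that the Jacobian of a positive genus curve is a positive-dimensional abelian variety.

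First I would recall that on a smooth curve, the torsion subsheaf $T(\mathcal{F}) \subset \mathcal{F}$ is supported on a finite collection of points, so we only need to worry about the torsion-free quotient $\mathcal{G} := \mathcal{F} / T(\mathcal{F})$, which is locally free of some rank $r \geq 0$. The torsion filtration is functorial in tensor-with-a-line-bundle, so the assumption $\mathcal{F} \otimes L \iso \mathcal{F}$ for all $L \in \Pic^0(C)$ restricts to isomorphisms $\mathcal{G} \otimes L \iso \mathcal{G}$.

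Next I would take determinants. Since $\mathcal{G}$ is a vector bundle of rank $r$, we have $\det(\mathcal{G} \otimes L) \iso \det(\mathcal{G}) \otimes L^{\otimes r}$, so the isomorphism $\mathcal{G} \otimes L \iso \mathcal{G}$ forces $L^{\otimes r} \iso \OO_C$ for every $L \in \Pic^0(C)(\kk)$. Since $g(C) > 0$, the Jacobian $\Pic^0(C)$ is a nontrivial abelian variety, and the multiplication-by-$r$ map $[r]\colon \Pic^0(C) \to \Pic^0(C)$ has finite kernel whenever $r \neq 0$ (and is surjective on $\kk$-points because $\kk$ is algebraically closed). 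In particular the $r$-torsion subgroup is finite and cannot equal all of $\Pic^0(C)(\kk)$ unless $r = 0$. Therefore $r = 0$, i.e., $\mathcal{G} = 0$, and $\mathcal{F} = T(\mathcal{F})$ has zero-dimensional support.

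There is no serious obstacle here; the only subtlety to verify carefully is that the isomorphism $\mathcal{F} \otimes L \iso \mathcal{F}$ is compatible with the torsion/torsion-free decomposition (which is immediate because both operations — tensoring with an invertible sheaf and extracting the torsion subsheaf — are exact functors that commute), and that the reduction to the determinant is legitimate. Everything else is a standard property of abelian varieties applied to the Jacobian of $C$.
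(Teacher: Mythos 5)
Your proof is correct and follows essentially the same route as the paper: both arguments pass to the determinant of the torsion-free quotient, use that $\det(\mathcal{G}\otimes L)\iso\det(\mathcal{G})\otimes L^{\otimes r}$, and derive a contradiction from the behaviour of the multiplication-by-$r$ map on the positive-dimensional Jacobian (you phrase it as finiteness of the $r$-torsion, the paper as surjectivity of $[r]$ plus faithfulness of the translation action on $\Pic(C)$; these are interchangeable).
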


There is a fancy argument using Fourier--Mukai transform along the Poincaré bundle on the product~$C \times \mathrm{Jac}(C)$, but we rather present a down-to-earth argument.

\begin{proof}
  Consider the determinant of the torsion-free quotient $\mathcal{F} / \mathrm{tors}(\mathcal{F})$. This determinant is a line bundle on $C$ which is invariant under the action of the $(\rk \mathcal{F})$'th powers of all elements from $\Pic^0(C)$. When $\rk \mathcal{F} \neq 0$, the power map $\Pic^0(C) \xrightarrow{\cdot \rk \mathcal{F}} \Pic^0(C)$ is surjective, so the determinant line bundle is in fact a $\Pic^0(C)$-invariant line bundle on $C$. If $\Pic^0(C)$ is not trivial, this is impossible since $\Pic^0(C)$ acts faithfully on $\Pic(C)$. By assumption the genus of~$C$ is positive, so $\Pic^0(C)$ has positive dimension.
\end{proof}

\begin{remark}
  An analogous statement for singular curves is false: for example, if $C$ is a nodal rational curve, the pushforward of the structure sheaf of the normalization $\widetilde{C} \to C$ is invariant under $\Pic^0(C) \iso \GG_m$. It would be interesting and potentially useful for the general goal of understanding which curves can support admissible subcategories to classify $\Pic^0$-invariant coherent sheaves and objects in derived categories for curves which are configurations of possibly singular rational curves.
\end{remark}

\section{Negativity with respect to the canonical class}
\label{sec:negativity_for_canonical}

Above, in Theorem~\ref{thm:rational_curves_only}, we showed that non-rational curves cannot be irreducible components of the support of an admissible subcategory. In this section we prove that a configuration of curves cannot support an admissible subcategory if all curves intersect the canonical class non-negatively. We again rely heavily on the $\Pic^0_{\mathrm{i.e.}}$-invariance of admissible subcategories proved in Section~\ref{sec:invariance}.

To start with, recall the following easy fact:

\begin{lemma}
  \label{lem:no_isolated_points_in_support}
  Let $X$ be a smooth proper variety, and let $\cA \subset \Dbcoh(X)$ be an admissible subcategory. Then $\supp(\cA)$ does not have any isolated points.
\end{lemma}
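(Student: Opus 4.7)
The plan is to derive a contradiction from assuming that some $p \in \supp(\cA)$ is isolated, by showing that this forces $\supp(\cA) = X$ while $X$ has no isolated points.

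First, I would reduce the problem from the subcategory $\cA$ to a single object in $\cA$. The proof of Lemma~\ref{lem:support_of_subcategory_is_closed} produces a classical generator $G_\cA$ of $\cA$ (as the projection to $\cA$ of any classical generator of $\Perf(X)$) satisfying $\supp(G_\cA) = \supp(\cA)$. If $p$ is an isolated point of $\supp(\cA)$, it is an isolated point of $\supp(G_\cA)$, so by the standard splitting result \cite[Lem.~3.9]{huybrechts-fm} there is a decomposition $G_\cA \iso G' \oplus G''$ with $\supp(G') = \{p\}$. Admissible subcategories are closed under direct summands, so $G' \in \cA$.

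Now Lemma~\ref{lem:isolated_points_imply_skyscrapers} applies to $G'$: the third equivalent condition holds (with $G'$ playing the role of $A$), so the moreover clause of that lemma yields $\supp(\cA) = X$. Combined with the assumption that $p$ is isolated in $\supp(\cA)$, this means $p$ is an isolated point of $X$ itself. Since $X$ is a smooth proper variety (in particular irreducible of positive dimension in the setting of the paper), this is impossible, giving the desired contradiction.

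There is no real technical obstacle; the main thing to get right is to package together two facts already established in Section~\ref{sec:supported_objects}: the existence of a classical generator whose support equals $\supp(\cA)$, and the implication ``$p$ isolated in the support of some object of $\cA$'' $\Rightarrow$ $\supp(\cA) = X$. Splitting off the $\{p\}$-component of $G_\cA$ bridges the gap between these two statements.
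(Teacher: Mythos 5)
Your proof is correct and follows the route the paper indicates (deducing the statement from Lemma~\ref{lem:isolated_points_imply_skyscrapers}), filling in the detail — reduction to a single object of $\cA$ via the classical generator $G_\cA$ with $\supp(G_\cA)=\supp(\cA)$ — that the paper leaves as ``an easy consequence.'' One could equally pass through the projection $A_p$ of $\OO_p$, observing $\supp(A_p)\subset\supp(\cA)$ and $p\in\supp(A_p)$ by Lemma~\ref{lem:probing_support_of_subcategory}, but your version is just as clean.
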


\begin{proof}
  An easy consequence of Lemma~\ref{lem:isolated_points_imply_skyscrapers}. More geometrically, this can also be obtained by Proposition~\ref{prop:kawatani_okawa_rigidity} by deforming the object via moving the point.
\end{proof}

\begin{theorem}
  \label{thm:no_nefness_on_support}
  Let $X$ be a smooth projective variety over an algebraically closed field $\kk$ of characteristic zero, and let $\cA \subset \Dbcoh(X)$ be an admissible subcategory. Assume that all irreducible components of $Z := \supp(\cA)$ have dimension at most one. If $\cA \neq 0$, then there exists an irreducible component $C \subset Z$ which is a curve such that the intersection number $K_X \cdot C$ is negative, where $K_X$ is the canonical line bundle on $X$.
\end{theorem}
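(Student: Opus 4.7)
The plan is to contradict Proposition~\ref{prop:condition_on_support}. By Lemma~\ref{lem:no_isolated_points_in_support} every irreducible component of $Z$ is a curve, and by Theorem~\ref{thm:rational_curves_only} each is rational. Write $Z = C_1 \cup \dots \cup C_r$ with $d_i := K_X \cdot C_i$, and assume for contradiction that $d_i \geq 0$ for every $i$. Pick a point $p \in Z$ that lies on exactly one component $C_{i_0}$ and avoids the singular locus of $Z$, and apply Proposition~\ref{prop:condition_on_support} to obtain an infinitesimal thickening $i\colon \widetilde{Z} \hookrightarrow X$ such that every global section of $K_X|_{\widetilde{Z}} \otimes M$ vanishes at $p$ for every $M \in \Pic^0_{\mathrm{i.e.}}(\widetilde{Z}/X)(\kk)$. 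Since $\widetilde{Z}_{\mathrm{red}} = Z$ has dimension one, Lemma~\ref{lem:infinitely_extendable_on_curves} gives $\Pic^0_{\mathrm{i.e.}}(\widetilde{Z}/X) = \Pic^0(\widetilde{Z})$, so the full connected Picard scheme of $\widetilde{Z}$ is at our disposal.

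The strategy is to construct a line bundle $L$ on $\widetilde{Z}$ and a global section $\widetilde{s} \in \Gamma(\widetilde{Z}, L)$ with $\widetilde{s}(p) \neq 0$ such that $L \otimes K_X|_{\widetilde{Z}}^{-1} \in \Pic^0(\widetilde{Z})$. On the reduced level the input is easy: choose general smooth points $D_i \subset C_i \setminus (\mathrm{Sing}(Z) \cup \{p\})$ with $|D_i| = d_i$, set $D := \sum D_i$, and take $\OO_Z(D)$ with its tautological section $\mathbf{1}$, whose zero locus is $D \not\ni p$. Both $\OO_Z(D)$ and $K_X|_Z$ restrict to a line bundle of degree $d_i$ on each rational component $C_i$, and since $\Pic^0(C_i) = 0$ this means the difference $[\OO_Z(D)] - [K_X|_Z]$ in $\Pic(Z)$ is of multidegree zero and hence lies in the identity component $\Pic^0(Z)$.

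The technical core is the lift of the pair $(\OO_Z(D), \mathbf{1})$ through the filtration $Z = Z_1 \subset Z_2 \subset \dots \subset \widetilde{Z}$ with $Z_k := V(\mathcal{I}^k)$, where $\mathcal{I}$ denotes the ideal of $Z$ in $X$. The line bundle lifts layer by layer by Lemma~\ref{lem:infinitely_extendable_on_curves}, and the key observation is that these successive lifts are not unique, but rather parametrized by the subgroup $\ker(\Pic(Z_{k+1}) \to \Pic(Z_k)) \cong H^1(Z, \mathcal{I}^k/\mathcal{I}^{k+1})$ of $\Pic^0(\widetilde{Z})$. At each stage the obstruction to prolonging the section from $Z_k$ to $Z_{k+1}$ is a class in $H^1(Z, \mathcal{I}^k/\mathcal{I}^{k+1} \otimes \OO_Z(D))$. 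Multiplication by $\mathbf{1}$ induces an injection of sheaves $\mathcal{I}^k/\mathcal{I}^{k+1} \hookrightarrow \mathcal{I}^k/\mathcal{I}^{k+1} \otimes \OO_Z(D)$ whose cokernel is supported on the zero-dimensional set $D \subset Z^{\mathrm{sm}}$, so the resulting map on $H^1$ is surjective, and one can absorb the obstruction by a suitable adjustment of the chosen line bundle. Iterating gives a line bundle $L$ on $\widetilde{Z}$ with a section $\widetilde{s}$ restricting to $\mathbf{1}$, hence with $\widetilde{s}(p) \neq 0$; setting $M := L \otimes K_X|_{\widetilde{Z}}^{-1} \in \Pic^0(\widetilde{Z}) = \Pic^0_{\mathrm{i.e.}}(\widetilde{Z}/X)$ contradicts Proposition~\ref{prop:condition_on_support}.

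The main obstacle is the iterated section-lifting argument: one must track precisely how modifying the chosen lift of the line bundle by a class in $H^1(Z, \mathcal{I}^k/\mathcal{I}^{k+1})$ shifts the corresponding obstruction class in $H^1(Z, \mathcal{I}^k/\mathcal{I}^{k+1} \otimes \OO_Z(D))$, and confirm that this shift is the multiplication-by-$\mathbf{1}$ map so that the surjectivity above applies uniformly across layers. The characteristic-zero hypothesis is used to guarantee that the identity component $\Pic^0(\widetilde{Z})$ is a smooth group scheme, so that the $\kk$-points chosen at each step assemble into a genuine element of $\Pic^0(\widetilde{Z})$ and the subgroup of $\Pic(\widetilde{Z})$ of multidegree-zero bundles agrees with $\Pic^0(\widetilde{Z})$.
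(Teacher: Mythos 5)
Your proposal is correct, but it takes a genuinely different route from the paper's at the crucial step. Both proofs reduce, via Proposition~\ref{prop:condition_on_support} and Lemma~\ref{lem:infinitely_extendable_on_curves}, to exhibiting a line bundle $L$ on the relevant thickening $\widetilde{Z}$, with $L \otimes K_X|_{\widetilde{Z}}^{-1}$ of multidegree zero (hence in $\Pic^0(\widetilde{Z})$ by the structure theory of Picard groups of proper curves), together with a global section of $L$ not vanishing at a chosen point $p$. The paper produces $L$ and its section directly on $\widetilde{Z}$: it intersects $\widetilde{Z}$ with sufficiently positive ambient divisors $H \subset X$ passing transversely through general points of the components $C_k$, yielding effective Cartier divisors on $\widetilde{Z}$ with the desired multidegrees, which one then tensors together; the section is built in and nothing needs to be lifted. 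You instead start on the reduced curve $Z$ with the pair $(\OO_Z(D), \mathbf{1})$ of the expected multidegree and lift it layer by layer through $Z = Z_1 \subset Z_2 \subset \dots \subset \widetilde{Z}$. The lifting argument is a correct deformation-theoretic calculation: the obstruction to lifting the section across $Z_k \subset Z_{k+1}$ lies in $H^1(Z, \mathcal{I}^k/\mathcal{I}^{k+1} \otimes \OO_Z(D))$, the ambiguity in the line-bundle lift is an $H^1(Z, \mathcal{I}^k/\mathcal{I}^{k+1})$-torsor, and (as your cocycle computation confirms) changing the line-bundle lift by $\alpha$ shifts the obstruction by $\mathbf{1}\cdot\alpha$. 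Since $D$ lies in the smooth locus of $Z$, the conormal graded pieces $\mathcal{I}^k/\mathcal{I}^{k+1}$ are locally free near $D$, so multiplication by $\mathbf{1}$ is injective there and its cokernel is punctual; in fact, for the surjectivity on $H^1$ it already suffices that $Z$ is one-dimensional, so even the injectivity is a luxury. The obstruction can therefore always be absorbed.

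Two inessential remarks. First, your appeal to the rationality of the components and to $\Pic^0(C_i) = 0$ is a red herring: the multidegree of $[\OO_Z(D)] - [K_X|_Z]$ is zero by construction, and the inference from multidegree zero to $\Pic^0$ membership is the cited structure theorem for Picard schemes of proper curves (the paper uses \cite[Ch.~9.2, Cor.~14]{neron-models}), not rationality or vanishing of $\Pic^0(C_i)$. Second, your stated role for the characteristic-zero hypothesis (smoothness of $\Pic^0(\widetilde{Z})$) is not actually where that hypothesis enters the argument; as in the paper, it is inherited from the invariance machinery upstream rather than used explicitly in this final step. The overall trade-off: the paper's construction is shorter and more concrete, leaning on projectivity to produce ambient hyperplanes, while yours is intrinsic to $\widetilde{Z}$ and makes the deformation-theoretic mechanism visible, at the cost of the section-lifting bookkeeping.
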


The class of varieties $X$ for which $K_X$ intersects each curve non-negatively is very important in the minimal model program. In this case $K_X$ is a nef line bundle and $X$ is called a \textit{minimal} variety. Let us note the obvious consequence:

\begin{corollary}
  \label{cor:no_curve_supports_when_minimal}
  Let $X$ be a smooth projective variety, and let $\cA \subset \Dbcoh(X)$ be an admissible subcategory. If the canonical linear system $K_X$ is nef, then either $\cA = 0$, or at least one of the irreducible components of $\supp(\cA)$ has dimension at least two.
\end{corollary}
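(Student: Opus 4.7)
The plan is to argue by contradiction: assume every irreducible component $C \subset Z$ satisfies $K_X \cdot C \geq 0$ and produce a section contradicting Proposition~\ref{prop:condition_on_support}. Since $\cA \neq 0$, the support $Z$ is nonempty. By Lemma~\ref{lem:no_isolated_points_in_support} it has no isolated points, and by hypothesis $\dim Z \le 1$, so $Z$ is a union of (proper) curves. By Theorem~\ref{thm:rational_curves_only} every irreducible component $C_i \subset Z$ is a rational curve, and by the contradiction hypothesis $\deg(K_X|_{C_i}) = K_X \cdot C_i \ge 0$.

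Next I would pick an irreducible component $C \subset Z$ and a closed point $p \in C$ which lies on no other irreducible component and is not a singular point of $C$ (this is possible since $C$ has only finitely many special points). Applying Proposition~\ref{prop:condition_on_support} to this~$p$ furnishes an infinitesimal neighborhood $Z \subset \widetilde{Z} \subset X$ such that for every $M \in \Pic^0_{\mathrm{i.e.}}(\widetilde{Z} \backslash X)(\kk)$ every global section of $K_X|_{\widetilde{Z}} \otimes M$ vanishes at $p$. The goal is to exhibit a single $M$ and a section of $K_X|_{\widetilde{Z}} \otimes M$ which does not vanish at $p$, contradicting this conclusion.

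The construction proceeds in two stages. First, on the reduced curve $Z$ one constructs a line bundle $M_0 \in \Pic^0(Z)$ and a section $s_0 \in H^0(Z, K_X|_Z \otimes M_0)$ with $s_0(p) \neq 0$: on $C$ the pullback $\nu^*(K_X|_C)$ to $\mathbb{P}^1$ has non-negative degree, so the normalization carries many sections not vanishing above $p$, and by modifying the gluing data at the nodes of $C$ and at the points where $C$ meets other components through a suitable element of $\Pic^0$, a section descending to $Z$ can be arranged (on the remaining components $C_j$ one uses the positivity $\deg(K_X|_{C_j}) \ge 0$ to find compatible sections). Second, by Lemma~\ref{lem:line_bundles_on_component_curves} the line bundle $M_0$ extends to an infinitely extendable line bundle $M$ on $\widetilde{Z}$; it remains to lift $s_0$ from $Z$ to a section $s$ on $\widetilde{Z}$ of $K_X|_{\widetilde{Z}} \otimes M$. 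Here the characteristic-zero hypothesis enters crucially: it ensures that $\Pic^0$ is smooth so that $M$ may be chosen generic within a positive-dimensional family, and one uses this flexibility (combined with the cohomology long exact sequence of $0 \to \mathcal{I} K_X \otimes M \to K_X|_{\widetilde{Z}} \otimes M \to K_X|_Z \otimes M|_Z \to 0$ iterated along the filtration by powers of the ideal of $Z$ in $\widetilde{Z}$) to show that a generic twist $M$ kills the obstruction in $H^1$ and hence the section lifts. Evaluating the lifted $s$ at the reduced point $p$ gives $s(p) = s_0(p) \neq 0$, yielding the contradiction.

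The main obstacle is the second stage: the thickening $\widetilde{Z}$ produced by Proposition~\ref{prop:condition_on_support} is not under our control, so we cannot shrink it; we must lift sections through arbitrarily deep infinitesimal extensions, and the relevant higher cohomology groups need not vanish a priori. Overcoming this requires using the full strength of the $\Pic^0_{\mathrm{i.e.}}$-flexibility together with the smoothness of Picard schemes in characteristic zero, and is the single place where the characteristic-zero hypothesis in the theorem is essential.
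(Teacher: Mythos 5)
The paper proves this corollary in one line: if every irreducible component of $\supp(\cA)$ has dimension at most one, then Theorem~\ref{thm:no_nefness_on_support} supplies a component $C$ with $K_X \cdot C < 0$, contradicting nefness. You never invoke Theorem~\ref{thm:no_nefness_on_support} and instead re-derive its content from Proposition~\ref{prop:condition_on_support}. That is a legitimate route in principle, but your specific construction has a gap in what you call the ``second stage.''

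You build a line bundle $M_0$ and a section $s_0$ on the reduced curve $Z$ and then try to lift both across the infinitesimal thickening $\widetilde{Z}$, arguing that a generic twist by $\Pic^0$ kills the obstruction in $H^1$. There is no justification for this: the obstruction to lifting a section across a thickening is an element of $H^1$ of a coherent sheaf on $Z$ that depends on the chosen section, and twisting by a degree-zero line bundle does not obviously make that obstruction vanish, nor do the relevant $H^1$ groups vanish generically (and recall $\widetilde{Z}$ is not under your control, so you cannot shrink the thickening). The paper sidesteps the lifting problem entirely: in the proof of Theorem~\ref{thm:no_nefness_on_support} one takes an ample divisor $H \subset X$ meeting $Z$ transversally at a smooth point $q \in C_k$, $q \neq p$, restricts the map $\OO_X(-H) \to \OO_X$ to $\widetilde{Z}$, and isolates a direct summand of the cokernel supported at $q$; this produces an effective Cartier divisor $L_k$ on $\widetilde{Z}$ of degree $1$ along $C_k$, degree $0$ on the other components, with a section not vanishing at $p$. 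Tensoring powers of these gives $L$ of degree $K_X \cdot C_k$ on each component, so $M := K_X^{\dual}|_{\widetilde{Z}} \otimes L \in \Pic^0(\widetilde{Z})$, and by Lemma~\ref{lem:infinitely_extendable_on_curves} (not Lemma~\ref{lem:line_bundles_on_component_curves}, which addresses a different situation) every line bundle on the thickened curve $\widetilde{Z}$ is infinitely extendable. This construction works directly on any thickening, so there is no $H^1$ obstruction to contend with, and it is characteristic-free; your claim that the characteristic-zero hypothesis enters precisely to control the lifting obstruction does not match where the paper actually needs it. (The appeal to Theorem~\ref{thm:rational_curves_only} is also unnecessary for this argument.)
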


\begin{proof}
  If all irreducible components of $\supp(\cA)$ have dimension one, by Theorem~\ref{thm:no_nefness_on_support} there exists a curve~$C \subset \supp(\cA)$ such that $K_X \cdot C < 0$. But this contradicts the assumption that~$K_X$ is nef.
\end{proof}

Before giving the proof of Theorem~\ref{thm:no_nefness_on_support}, we mention some other simple corollaries.

\begin{corollary}
  \label{cor:nef_with_curve_base_locus}
  Let $X$ be a smooth projective variety. If the canonical bundle $K_X$ is nef and the base locus of $|K_X|$ has dimension at most one, then $X$ admits no non-trivial semiorthogonal decompositions.
\end{corollary}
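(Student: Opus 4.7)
The plan is to deduce this corollary as an immediate combination of two ingredients already at our disposal: the Kawatani--Okawa base locus theorem \cite[Thm.~1.1]{kawatani-okawa}, and Corollary~\ref{cor:no_curve_supports_when_minimal} (which is itself a quick consequence of Theorem~\ref{thm:no_nefness_on_support}, the technical heart of this section). The argument is by contradiction, with essentially no new geometric input.

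Assume $\dim X \geq 2$ and suppose for contradiction that $\Dbcoh(X) = \langle \cA, \cB \rangle$ is a non-trivial semiorthogonal decomposition, so that both admissible subcategories $\cA$ and $\cB$ are nonzero. By the Kawatani--Okawa theorem, after possibly swapping $\cA$ and $\cB$, we may assume that $\supp(\cA) \subseteq \mathrm{Bs}|K_X|$. The hypothesis $\dim \mathrm{Bs}|K_X| \leq 1$ then guarantees that every irreducible component of $\supp(\cA)$ has dimension at most one (and in particular $\supp(\cA) \neq X$, since $\dim X \geq 2$). Because $K_X$ is nef, Corollary~\ref{cor:no_curve_supports_when_minimal} applied to $\cA$ forces $\cA = 0$, contradicting non-triviality of the decomposition.

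The only remaining bookkeeping concerns the trivial dimension ranges: for $\dim X = 0$ there is nothing to prove, and for $\dim X = 1$ the assumption that $K_X$ is nef amounts to $g(X) \geq 1$, in which case indecomposability of $\Dbcoh(X)$ is classical by \cite{okawa-curve}. Thus no step of the corollary is genuinely difficult; all the real work has been done in Theorem~\ref{thm:no_nefness_on_support}, whose proof relies on the $\Pic^0_{\mathrm{i.e.}}$-invariance of admissible subcategories developed in Section~\ref{sec:invariance} and on Proposition~\ref{prop:condition_on_support}. The present corollary is simply the packaging step that converts the negativity statement on curve supports into an indecomposability statement, using Kawatani--Okawa to locate an admissible component inside the base locus.
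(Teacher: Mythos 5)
Your argument matches the paper's proof exactly: invoke Kawatani--Okawa \cite[Thm.~1.1]{kawatani-okawa} to place one component inside $\mathrm{Bs}|K_X|$, then apply Corollary~\ref{cor:no_curve_supports_when_minimal} (a direct consequence of Theorem~\ref{thm:no_nefness_on_support}) to force that component to vanish. The extra bookkeeping for $\dim X \leq 1$ is harmless but unnecessary, since Corollary~\ref{cor:no_curve_supports_when_minimal} already handles the case $\dim \supp(\cA) \leq 1$ uniformly.
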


\begin{proof}
  By Kawatani--Okawa theorem~\cite[Thm.~1.1]{kawatani-okawa} for any semiorthogonal decomposition~$\Dbcoh(X) = \langle \cA, \cB \rangle$ one of the two subcategories, without loss of generality we assume it is the subcategory $\cA$, has $\supp(\cA) \subset \mathrm{Bs} |K_X|$. In particular, $\dim \supp(\cA) \leq 1$. Then Corollary~\ref{cor:no_curve_supports_when_minimal} of Theorem~\ref{thm:no_nefness_on_support} implies that $\cA = 0$ since the canonical bundle $K_X$ intersects any curve non-negatively by assumption.
\end{proof}

In dimension two this observation completes the proof of the following conjecture by Okawa:

\begin{theorem}[{\cite[Conj.~1.8]{okawa-irregular}}]
  \label{thm:minimal_surfaces_indecomposable}
  A smooth projective surface $X$ with nef canonical class~$K_X$ admits a non-trivial semiorthogonal decomposition of~$\Dbcoh(X)$ if and only if~$\OO_X$ is an exceptional object, i.e., when $H^1(\OO_X) = H^2(\OO_X) = 0$.
\end{theorem}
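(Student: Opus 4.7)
The statement is a biconditional which I would handle in two directions. The easy direction is immediate: the assumption $H^1(\OO_X) = H^2(\OO_X) = 0$ is precisely that $\Ext^{>0}(\OO_X, \OO_X) = 0$, making $\OO_X$ an exceptional object and producing the nontrivial semiorthogonal decomposition $\Dbcoh(X) = \langle \emptyperp \langle \OO_X \rangle, \langle \OO_X \rangle \rangle$. For the converse, I assume that $X$ is a smooth projective surface with $K_X$ nef and that $\Dbcoh(X) = \langle \cA, \cB \rangle$ is a nontrivial semiorthogonal decomposition, and I would establish the vanishings $H^2(\OO_X) = 0$ and $H^1(\OO_X) = 0$ in two separate arguments.

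For $H^2(\OO_X) = 0$, which is where the new results of this paper enter, I would argue by contradiction using Corollary~\ref{cor:nef_with_curve_base_locus}. By Serre duality $H^2(\OO_X) \iso H^0(K_X)^\dual$, so nonvanishing here is equivalent to $K_X$ being effective. Any effective divisor on a surface is purely one-dimensional, so the base locus $\mathrm{Bs}|K_X|$, being contained in any canonical divisor, has dimension at most one. Combined with the nefness of $K_X$, Corollary~\ref{cor:nef_with_curve_base_locus} then forbids a nontrivial semiorthogonal decomposition, contradicting our assumption. This is the substantive new input and the only step that depends on the machinery developed in the preceding sections; I expect it to be the main content of the proof.

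For $H^1(\OO_X) = 0$ I would invoke the previously known cases of Okawa's conjecture~\cite[Conj.~1.8]{okawa-irregular}, which cover minimal surfaces with positive irregularity $q = h^1(\OO_X)$. That input rests on the $\Pic^0(X)$-invariance of admissible subcategories from~\cite[Thm.~1.4]{kawatani-okawa} applied together with the Albanese morphism, but no further techniques from the present paper are needed. Ruling out $q > 0$ in this way and combining with the vanishing $p_g = 0$ already obtained gives $H^1(\OO_X) = H^2(\OO_X) = 0$, i.e., exceptionality of $\OO_X$, which completes the proof.
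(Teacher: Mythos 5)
Your proposal is correct and takes essentially the same route as the paper: the easy direction is as you say, and for the converse the paper also splits into the two cases governed by $H^2(\OO_X)$ and $H^1(\OO_X)$, deriving $H^2(\OO_X)=0$ from Corollary~\ref{cor:nef_with_curve_base_locus} (via Serre duality and the one-dimensionality of $\mathrm{Bs}|K_X|$, exactly your argument) and citing \cite[Thm.~1.7]{okawa-irregular} to exclude $H^1(\OO_X)\neq 0$. The only cosmetic difference is that the paper phrases the two cases as ``if a cohomology group is nonzero then $\Dbcoh(X)$ is indecomposable'' rather than your contrapositive, and it also sketches an alternative treatment of the $H^1(\OO_X)\neq 0$ case using the paper's own Corollary~\ref{cor:no_curve_supports_when_minimal} together with the stable indecomposability result of \cite[Thm.~1.4]{pirozhkov-nssi} (not the Kawatani--Okawa $\Pic^0$-invariance you mention, though that is a minor inaccuracy in your description of a black-boxed input, not a gap in your proof).
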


\begin{proof}
  If $\OO_X$ is an exceptional object, then of course $\Dbcoh(X) = \langle \OO_X, \emptyperp \OO_X \rangle$ is a nontrivial semiorthogonal decomposition, so we only need to prove the other implication, namely that~$\Dbcoh(X)$ is indecomposable when either~$H^1(\OO_X)$ or $H^2(\OO_X)$ is nontrivial.

  If $H^2(\OO_X) = 0$, then either $\OO_X$ is an exceptional object, or $H^1(\OO_X) \neq 0$. In the case where~$H^1(\OO_X) \neq 0$ the indecomposability of~$\Dbcoh(X)$ is proved in \cite[Thm.~1.7]{okawa-irregular}. Below we will indicate an alternative proof of this case relying on Theorem~\ref{thm:no_nefness_on_support}, but for now note that only the case of surfaces with $H^2(\OO_X) \neq 0$ remains. By Serre duality this non-vanishing is equivalent to~$H^0(K_X) \neq 0$, in other words, the canonical class $K_X$ is not only nef, but also effective. Then the base locus $\mathrm{Bs} |K_X| \subset X$ is a proper subset of $X$, and hence $\dim \mathrm{Bs} |K_X| \leq 1$. Then by Corollary~\ref{cor:nef_with_curve_base_locus} the category $\Dbcoh(X)$ is semiorthogonally indecomposable category, as claimed.

  The theorem is now proved, but, as promised, let us explain why Theorem~\ref{thm:no_nefness_on_support} handles the case $H^1(\OO_X) \neq 0$ as well. We start by recalling the strategy used in the paper \cite{okawa-irregular}. The key idea is to consider the Albanese morphism $\mathrm{alb}_{X}\colon X \to \mathrm{Alb}(X)$ and its Stein factorization~$X \xrightarrow{f} Y \to \mathrm{Alb}(X)$. Since $\dim \mathrm{Alb}(X) = \dim H^1(\OO_X) > 0$, the morphism is not trivial, and all fibers of the map $f\colon X \to Y$ are either points or curves. By assumption $X$ is a minimal surface, so it is not covered by rational curves, hence a general fiber of $f$ is either a point or a curve of positive genus, in particular the general fiber has an indecomposable derived category. By the stable semiorthogonal indecomposability of $Y$~\cite[Thm.~1.4]{pirozhkov-nssi} we know that any semiorthogonal decomposition of $\Dbcoh(X)$ is $f$-linear, so the indecomposability of the derived category of the general fiber implies that for any semiorthogonal decomposition~$\Dbcoh(X) = \langle \cA, \cB \rangle$ at least one of the components, without loss of generality we assume it's the subcategory $\cA$, is supported on the preimage of some subvariety $Y^\prime \subset Y$ with $\dim Y^\prime < \dim Y$. In particular, the dimension of $\supp(\cA)$ is at most one.
  
  At this point we can again use Corollary~\ref{cor:no_curve_supports_when_minimal} to deduce that $\cA = 0$, which confirms that the category $\Dbcoh(X)$ is indecomposable. Note that in \cite{okawa-irregular} Okawa finishes a proof by performing a detailed study of the curve $\supp(\cA)$ instead: using strong results by Konno on the geometry of minimal surfaces (\cite{konno-singularities}, \cite{konno-fibered}) one can check that $\supp(\cA)$ is a curve contracted by the morphism $f\colon X \to Y$ and, moreover, it is a curve with some very special properties with regard to the $f$-relative base locus of $K_X$.
\end{proof}

Now we return to Theorem~\ref{thm:no_nefness_on_support}.

\begin{proof}[{Proof of Theorem~\ref{thm:no_nefness_on_support}}]
  We prove this by contradiction. Note that $Z$ has no isolated points by Lemma~\ref{lem:no_isolated_points_in_support}, so no zero-dimensional irreducible components. Assume that $Z := \supp(\cA)$ is a one-dimensional closed subset of $X$ with irreducible components $Z = C_1 \cup \dots \cup C_m$, and that $K_X \cdot C_k \geq 0$ for any $k \in [1, m]$.
  
  Let us first show that, given any infinitesimal thickening $i\colon \widetilde{Z} \monoarrow X$ of the subset $Z$, for any point $p \in Z$ we can find a line bundle $M \in \Pic^0(\widetilde{Z})$ and a global section $s \in \Gamma(\widetilde{Z}, K_X|_{\widetilde{Z}} \otimes M)$ such that $s(p) \neq 0$.
  The key input is the structure theory of Picard groups of proper (possibly reducible and non-reduced) curves: by~\cite[Ch.~9.2, Cor.~14]{neron-models} the quotient of~$\Pic(\widetilde{Z})$ by~$\Pic^0(\widetilde{Z})$ embeds into $\ZZ^{\oplus m}$ with the map given by $L \mapsto (\deg_{C_1}(L), \dots, \deg_{C_m}(L))$. As such, to show that the twist of $K_X|_{\widetilde{Z}}$ by some element of $\Pic^0(\widetilde{Z})$ has a global section which is non-vanishing at $p$ it is enough to just find \textit{any} line bundle $L \in \Pic(\widetilde{Z})$ with $\deg_{C_k}(L) = K_X \cdot C_k$ for all $k$, admitting a global section $s \in \Gamma(\widetilde{Z}, L)$ such that $s(p) \neq 0$; then $M := K_X^\dual |_{\widetilde{Z}} \otimes L$ lies in $\Pic^0(\widetilde{Z})$ automatically.
  
  For example, when all intersection numbers $K_X \cdot C_k$ are zero, the trivial line bundle $L = \OO_{\widetilde{Z}}$ works. Since we have assumed that $K_X \cdot C_k \geq 0$ for any $k$, it is enough to show that for each~$k \in [1, m]$ there exists a line bundle $L_k \in \Pic(\widetilde{Z})$ such that $\deg_{C_l}(L_k)$ is $1$ when $l = k$ and $0$ otherwise, with a global section $s_k \in \Gamma(\widetilde{Z}, L_k)$ which does not vanish at $p$. Then the line bundle $L := \bigotimes_{k = 1}^{m} L_k^{\otimes K_X \cdot C_k}$ satisfies the requirements. The existence of $L_k$, as an effective Cartier divisor on $\widetilde{Z}$, is a matter of commutative algebra and is demonstrated in~\cite[Ch.~9.1, Cor.~10]{neron-models}. When $X$ is projective it can also be proved geometrically as follows: take some point $q \in C_k$ not equal to $p$, with $q$ in the smooth part of $Z$. Then, since $X$ is projective and $q \in X$ is a smooth point, there exists some (sufficiently positive) smooth divisor $H \subset X$ which intersects $Z$ at $q$ transversely and does not contain any of the irreducible components of $Z$. Note that $H \cap Z$ is a finite set, in particular $q$ is an isolated point. Then the map $\OO_{X}(-H) \to \OO_X$ restricts to a monomorphism of line bundles on $\widetilde{Z}$, and its cokernel~$\mathcal{F} \in \Coh(\widetilde{Z})$ has a direct summand $\mathcal{F}_q$ supported, set-theoretically, only at~$q \in Z$, and the quotient~$\mathcal{F} / \mathcal{F}_q$ is supported away from $q$. Then the kernel of the composition~$\OO_{X} \to \mathcal{F} \epiarrow \mathcal{F}_q$ is an invertible ideal sheaf since the invertibility is a Zariski-local property. The dual to this kernel is an effective Cartier divisor of degree $1$ along $C_k$ and trivial along the other components of $Z$. This shows that a line bundle $L_k$ with the desired properties exists.

  Now we can finish the proof. Pick some point $p \in Z$. By Proposition~\ref{prop:condition_on_support} there exists some infinitesimal neighborhood $\widetilde{Z}$ of $Z \subset X$ such that any line bundle $M \in \Pic^0_{\mathrm{i.e.}}(\widetilde{Z} \backslash X)$ has the property that all global sections of $K_X|_{\widetilde{Z}} \otimes M$ on $\widetilde{Z}$ vanish at the reduced point~$p \in Z \subset \widetilde{Z}$. However, in the argument above for any infinitesimal neighborhood of $Z$, hence for $\widetilde{Z}$ from Proposition~\ref{prop:condition_on_support} as well, we have explicitly constructed a line bundle $M \in \Pic^0(\widetilde{Z})$ such that $K_X|_{\widetilde{Z}} \otimes M$ has a global section non-vanishing at $p \in Z$. Since $Z$ is a curve, so is its infinitesimal thickening $\widetilde{Z}$, and by Lemma~\ref{lem:infinitely_extendable_on_curves} all line bundles on $\widetilde{Z}$ are automatically infinitely extendable. This is a contradiction, which means that for any admissible subcategory $\cA \subset X$ the support $\supp(\cA)$ can be a curve only when at least one irreducible component of that curve intersects the canonical class negatively.
\end{proof}

\section{Negativity of self-intersections}
\label{sec:self_intersections_negative}

The results of the previous section grew out of thinking about a general question: which (reduced) subvarieties $Z \subset X$ can support an admissible subcategory $\cA \subset \Dbcoh(X)$? Since $Z$ cannot have isolated points (Lemma~\ref{lem:no_isolated_points_in_support}), the simplest non-trivial case is a curve $C$ in the surface $S$. The basic examples of semiorthogonal decompositions of surfaces supported on curves appear from Orlov's blow-up formula: if $C \subset S$ can be contracted to a smooth point, then we can find an admissible subcategory $\cA \subset \Dbcoh(S)$ with $\supp(\cA) = C$. As we will see in Example~\ref{ex:noncontractible_curve} below, one can find other examples, including an admissible subcategory supported on a non-contractible curve $C \subset S$. Nevertheless, the list of known examples of curves supporting an admissible subcategory is short, and it's only slightly longer than the list of curves for which we can prove the non-existence of a supported admissible subcategory.

For example, it follows from the (supported version of the) Kawatani--Okawa indecomposability theorem~\cite[Prop.~3.4]{kawatani-okawa} that a smooth rational curve $C$ in a surface $S$ with self-intersection $C \cdot C = n$ does not admit an admissible subcategory $\cA \subset \Dbcoh(S)$ with~$\supp(\cA) = C$ when $n \leq -2$. In the case when $n = -1$ the structure sheaf $\OO_{C}$ is an exceptional object and thus generates an admissible subcategory supported on $C$, as explained by Orlov's blow-up formula.

To the best of my knowledge, the case of non-negative self-intersections is not fully treated in the literature, and in this section we cover this gap by proving a stronger statement: Theorem~\ref{thm:sod_implies_negativity} below shows that a reduced curve $C \subset S$ in a smooth projective surface cannot support an admissible subcategory if all irreducible components of $C$ have non-negative self-intersections.

\begin{remark}
  Note that the rigidity of admissible subcategories (Proposition~\ref{prop:kawatani_okawa_rigidity}) suggests that the support of an admissible subcategory should not be a movable curve, but this is hard to turn into a proper argument since only the reduced curve moves freely while an admissible subcategory of $\Dbcoh(S)$ supported on $C$ depends on some infinitesimal neighborhood of the curve $C$ (see Lemma~\ref{lem:supported_subcategories_factor}), and this infinitesimal neighborhood, considered as a scheme, may vary non-trivially while the curve is moved in $S$. Deformation theory for admissible subcategories has been studied in the paper~\cite{bor-moduli}, but here one would need a technical generalization to families of non-reduced schemes and not-quite-admissible subcategories (see Remark~\ref{rem:admissible_lifts_to_nonfull_subcategory}), which does not exist so far.
\end{remark}

In order to prove the main theorem of this section, we need to recall some facts about coherent sheaves with non-full support, as well as some special properties of the Euler pairing for such coherent sheaves. 

Recall the notion of the Euler pairing for coherent sheaves:

\begin{definition}
  Let $X$ be a smooth projective variety, and let $\mathcal{F}$, $\mathcal{E}$ be two coherent sheaves on $X$. Their \textit{Euler pairing} is the number
  \[ \chi(\mathcal{F}, \mathcal{E}) = \sum_{i = 0}^{\infinity} (-1)^{i} \dim \Ext^{i}(\mathcal{F}, \mathcal{E}). \]
\end{definition}

On a smooth projective surface all $\Ext$'s of degree higher than two vanish, so in this case we have
\[ \chi(\mathcal{F}, \mathcal{E}) = \dim \Hom(\mathcal{F}, \mathcal{E}) - \dim \Ext^1(\mathcal{F}, \mathcal{E}) + \dim \Ext^2(\mathcal{F}, \mathcal{E}). \]

\begin{lemma}
  \label{lem:support_chi_vanishing}
  Let $X$ be a smooth projective variety. Let $\mathcal{F}$ and $\mathcal{E}$ be two coherent sheaves on~$X$ such that $\supp(\mathcal{F})$ is zero-dimensional and $\supp(\mathcal{E})$ is not equal to $X$. Then the vanishing
  \[
    \chi(\mathcal{F}, \mathcal{E}) = \chi(\mathcal{E}, \mathcal{F}) = 0
  \]
  holds.
\end{lemma}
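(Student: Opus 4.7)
The plan is to reduce the statement to a codimension argument via Hirzebruch--Riemann--Roch: rewrite both Euler pairings as intersection numbers in the rational Chow ring $A^*(X)_{\Q}$, and then use the two support hypotheses to force the integrand to live in codimension strictly greater than $n := \dim X$, where it vanishes automatically. Since the pairing depends only on the classes $[\mathcal{F}], [\mathcal{E}] \in K_0(X)$, this is a purely numerical computation.

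First I would invoke the Hirzebruch--Riemann--Roch formula
\[
\chi(\mathcal{F}, \mathcal{E}) \;=\; \int_X \mathrm{ch}(\mathcal{F})^{\vee} \cdot \mathrm{ch}(\mathcal{E}) \cdot \mathrm{td}(X),
\]
where $\alpha \mapsto \alpha^{\vee}$ is the involution of the Chow ring that multiplies the codimension-$i$ part by $(-1)^i$. The same formula with $\mathcal{F}$ and $\mathcal{E}$ interchanged handles $\chi(\mathcal{E}, \mathcal{F})$, so it suffices to prove the vanishing for one of the two pairings.

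Next I would extract the codimension bounds. Because $\supp(\mathcal{F})$ is zero-dimensional, the coherent sheaf $\mathcal{F}$ admits a finite filtration whose successive quotients are skyscraper sheaves $\mathcal{O}_p$, each with Chern character lying in $A^n(X)_{\Q}$. Additivity of $\mathrm{ch}$ on short exact sequences then shows that $\mathrm{ch}(\mathcal{F})$, and hence $\mathrm{ch}(\mathcal{F})^{\vee}$, is concentrated in codimension $n$. On the other hand, $\supp(\mathcal{E}) \neq X$ forces $\rk(\mathcal{E}) = \mathrm{ch}_0(\mathcal{E}) = 0$, so $\mathrm{ch}(\mathcal{E}) \cdot \mathrm{td}(X)$ has no component in codimension zero (the Todd class starts with $1$).

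Combining these observations, the product $\mathrm{ch}(\mathcal{F})^{\vee} \cdot \mathrm{ch}(\mathcal{E}) \cdot \mathrm{td}(X)$ is supported in codimension at least $n+1$, and so vanishes in $A^*(X)_{\Q}$ for dimension reasons. Integrating gives $\chi(\mathcal{F}, \mathcal{E}) = 0$, and the symmetric argument gives $\chi(\mathcal{E}, \mathcal{F}) = 0$. There is no real obstacle here; the only point requiring mild care is the codimension statement for $\mathrm{ch}(\mathcal{F})$, which follows at once from additivity of the Chern character applied to a composition series of the zero-dimensional sheaf $\mathcal{F}$.
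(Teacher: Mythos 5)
Your proof is correct but uses a genuinely different route from the paper. The paper's proof uses deformation invariance of the Euler pairing: since $X$ is smooth, one can move the finitely many points of $\supp(\mathcal{F})$ in a flat family until they are disjoint from $\supp(\mathcal{E})$, at which point all $\Ext$-groups vanish for support reasons, and deformation invariance gives $\chi(\mathcal{F},\mathcal{E}) = 0$. You instead compute via Hirzebruch--Riemann--Roch and a codimension count in the rational Chow ring: $\mathrm{ch}(\mathcal{F})$ is concentrated in top codimension $n$ because a zero-dimensional sheaf has a composition series by skyscrapers, while $\mathrm{ch}(\mathcal{E})$ has vanishing rank part because its support is proper, so the integrand sits in codimension $\geq n+1$ and dies. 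Both arguments are clean; the paper's is more geometric and avoids HRR and Chow groups, while yours is a self-contained cohomological computation that sidesteps the (mildly delicate, though unproblematic) claim that a zero-dimensional sheaf on a smooth variety moves freely in a flat family. One small imprecision: you say ``the same formula with $\mathcal{F}$ and $\mathcal{E}$ interchanged handles $\chi(\mathcal{E},\mathcal{F})$,'' but HRR is not symmetric in its two arguments (the involution $(-)^{\vee}$ is applied only to the first); what you actually mean, and what does work, is that the identical codimension argument applied to $\mathrm{ch}(\mathcal{E})^{\vee}\cdot\mathrm{ch}(\mathcal{F})\cdot\mathrm{td}(X)$ again lands in codimension $\geq n+1$.
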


\begin{remark}
  This is a special property of zero-dimensional supports; just having 
  \[
    \dim \supp(\mathcal{F}) + \dim \supp(\mathcal{E}) < \dim X
  \]
  is not enough for orthogonality.
\end{remark}

\begin{proof}
  The Euler pairing stays constant in families of coherent sheaves. Since on a smooth variety all points have isomorphic infinitesimal thickening, we can move each of the finitely many points of $\supp(\mathcal{F})$ away from the subvariety $\supp(\mathcal{E}) \subset X$, producing a family of coherent sheaves including both the sheaf $\mathcal{F}$ and another coherent sheaf $\mathcal{F}^\prime$ such that
  \[
    \supp(\mathcal{F}^\prime) \cap \supp(\mathcal{E}) = \emptyset
  \]
  and~$\chi(\mathcal{F}, \mathcal{E}) = \chi(\mathcal{F}^\prime, \mathcal{E})$. Since the supports of $\mathcal{F}^\prime$ and $\mathcal{E}$ are disjoint, there are no nontrivial $\Hom$'s or $\Ext$'s between them, which by definition implies that~$\chi(\mathcal{F}^\prime, \mathcal{E}) = 0$.
\end{proof}

\begin{definition}
  Let $R$ be a Noetherian local ring, and let $\mathcal{F}$ be a coherent sheaf on $\mathrm{Spec} R$ supported at the maximal ideal $\{ \mathfrak{m} \} \subset \mathrm{Spec} R$. Then the \textit{length of $\mathcal{F}$} is the length of the longest filtration of coherent subsheaves of $\mathcal{F}$.
\end{definition}

\begin{definition}
  \label{def:length_along_component_of_support}
  Let $X$ be a reduced Noetherian scheme, and let $\mathcal{F}$ be a coherent sheaf on $X$. Let $Z \subset \supp(\mathcal{F})$ be an irreducible component of the set-theoretical support of $\mathcal{F}$. Then the \textit{length of $\mathcal{F}$ along $Z$} is the length of the localization of $\mathcal{F}$ at the generic point of $Z$.
\end{definition}

\begin{example}
  Let $C \subset S$ be a reduced irreducible curve in a smooth projective surface. Then for any $n$ the structure sheaf $\OO_{n C}$ of the $n$'th infinitesimal thickening of $C$ has length $n$ along $C$, and the direct sum $\OO_{C}^{\oplus n}$ also has length $n$ along $C$.
\end{example}

In this section we work with a curve $C \subset S$ in a surface which may have many irreducible components. For convenience of notation we collect the lengths into a single vector:

\begin{definition}
  \label{def:length_vector}
  Let $S$ be a smooth projective surface, and let $C \subset S$ be a reduced curve. Denote the irreducible components of $C$ by $\{ C_i \}_{i \in [1, n]}$. For a coherent sheaf $\mathcal{F}$ on $S$ whose set-theoretical support is contained in $C$ its \textit{length vector}~$\underline{l}(\mathcal{F}) \in \ZZ_{\geq 0}^{n}$ has $i$'th component~$l_{i}(\mathcal{F})$ equal to the length of $\mathcal{F}$ along $C_i$.
\end{definition}

Similarly, we collect the intersection numbers between the irreducible components of $C$ into the \textit{intersection matrix}: if $C = \bigcup_{i \in [1, n]} C_i$ is the decomposition into irreducible components, then the intersection matrix $I \in \mathrm{Mat}_{n}(\ZZ)$ has $(i, j)$'th matrix element equal to the intersection number $C_i \cdot C_j$.

\begin{lemma}
  \label{lem:euler_characteristic_via_intersections}
  Let $S$ be a smooth projective surface, and let $C \subset S$ be a reduced curve. Let~$C = \bigcup_{i \in [1, n]} C_i$ be the decomposition into irreducible components, and let $I \in \mathrm{Mat}_{n}(\ZZ)$ be the intersection matrix of $C$. Let $\mathcal{F}$ and $\mathcal{E}$ be two coherent sheaves supported on $C$. Then the Euler characteristic $\chi(\mathcal{F}, \mathcal{E})$
  is equal to $-I(\underline{l}(\mathcal{F}), \underline{l}(\mathcal{E}))$.
\end{lemma}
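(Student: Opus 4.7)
The plan is to apply Hirzebruch--Riemann--Roch on the smooth projective surface $S$ directly. For any two coherent sheaves on $S$, we have
\[ \chi(\mathcal{F}, \mathcal{E}) = \int_S \mathrm{ch}(\mathcal{F})^{\vee} \cdot \mathrm{ch}(\mathcal{E}) \cdot \mathrm{td}(S), \]
where $\mathrm{ch}(\mathcal{F})^{\vee}$ is the Chern character of the derived dual, i.e.\ the Chern character with sign $(-1)^k$ on its degree-$k$ piece. Both $\mathcal{F}$ and $\mathcal{E}$ are torsion sheaves on $S$ since they are supported on the curve $C$, so $\mathrm{ch}_0(\mathcal{F}) = \mathrm{ch}_0(\mathcal{E}) = 0$, and on a surface only the degree-two component of the integrand contributes to the integral.

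Expanding $\mathrm{ch}(\mathcal{F})^{\vee} \cdot \mathrm{ch}(\mathcal{E}) \cdot \mathrm{td}(S)$ and collecting the degree-two terms, the unique nonvanishing contribution is $(-\mathrm{ch}_1(\mathcal{F})) \cdot \mathrm{ch}_1(\mathcal{E}) \cdot 1$: every other combination is killed either by the vanishing of $\mathrm{ch}_0$ on one of the sheaves or by total-degree reasons. Hence
\[ \chi(\mathcal{F}, \mathcal{E}) = -\mathrm{ch}_1(\mathcal{F}) \cdot \mathrm{ch}_1(\mathcal{E}) \]
as an intersection number on $S$. To finish, I would invoke the standard identification of $\mathrm{ch}_1$ for a coherent sheaf supported in codimension one on a smooth variety: $\mathrm{ch}_1(\mathcal{F}) = \sum_{\eta} \operatorname{length}_{\mathcal{O}_{S,\eta}}(\mathcal{F}_\eta) \cdot [\overline{\{\eta\}}]$, the sum running over the generic points $\eta$ of the codimension-one components of $\supp(\mathcal{F})$. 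By Definition~\ref{def:length_along_component_of_support} this equals $\sum_i l_i(\mathcal{F}) [C_i]$, and similarly for $\mathcal{E}$. Substituting yields
\[ \chi(\mathcal{F}, \mathcal{E}) = -\sum_{i,j} l_i(\mathcal{F}) \, l_j(\mathcal{E}) \, (C_i \cdot C_j) = -I(\underline{l}(\mathcal{F}), \underline{l}(\mathcal{E})), \]
as required.

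The only step that needs real care is the identification of $\mathrm{ch}_1$ with the length-weighted support cycle. A proof avoiding intersection-theoretic prerequisites is available: by Lemma~\ref{lem:support_chi_vanishing}, the pairing $\chi(-,-)$ factors through the quotient of $K_0(\Coh_C(S))$ by the subgroup generated by zero-dimensional sheaves, and a short devissage on the length vector $\underline{l}$ shows $[\mathcal{F}] \equiv \sum_i l_i(\mathcal{F}) [\mathcal{O}_{C_i}]$ in that quotient (any rank-one torsion-free sheaf on an integral $C_i$ differs from $\mathcal{O}_{C_i}$ by a zero-dimensional divisor class). This bilinearly reduces the claim to $\chi(\mathcal{O}_{C_i}, \mathcal{O}_{C_j}) = -(C_i \cdot C_j)$, which one computes directly by applying $\mathrm{Hom}(-, \mathcal{O}_{C_j})$ to the Koszul resolution $0 \to \mathcal{O}_S(-C_i) \to \mathcal{O}_S \to \mathcal{O}_{C_i} \to 0$ and then Riemann--Roch on the integral curve $C_j$.
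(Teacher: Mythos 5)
Your main argument is correct, and it takes a genuinely different route from the paper. The paper never invokes Hirzebruch--Riemann--Roch or Chern characters; instead it runs an explicit devissage argument: it filters $\mathcal{F}$ down to a pushforward from the reduced curve $C$, compares $\mathcal{F}$ with $f_{i*}f_i^*\mathcal{F}$ along the normalizations $f_i\colon \widetilde{C_i} \to S$, uses the decomposition of a coherent sheaf on a smooth curve as (vector bundle) $\oplus$ (torsion), and reduces a vector bundle on $\widetilde{C_i}$ to a trivial one by taking sufficiently positive twists and generic sections, all the time invoking Lemma~\ref{lem:support_chi_vanishing} to discard zero-dimensional summands. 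Everything then reduces to $\chi(\OO_{C_i}, \OO_{C_j}) = -C_i \cdot C_j$ via the Koszul resolution of $\OO_{C_i}$ on $S$ together with Riemann--Roch on $C_j$. Your HRR computation reaches the same conclusion in one stroke, at the cost of using the identification of $\operatorname{ch}_1$ of a codimension-one torsion sheaf with its length-weighted support cycle, which is heavier intersection-theoretic input than the paper wants to rely on. The degree-counting in the HRR formula is correct: with $\operatorname{ch}_0(\mathcal{F}) = \operatorname{ch}_0(\mathcal{E}) = 0$, the only degree-two survivor is indeed $-\operatorname{ch}_1(\mathcal{F}) \cdot \operatorname{ch}_1(\mathcal{E})$. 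Your sketched ``elementary alternative'' is essentially the paper's argument; the one place it is slightly looser than the paper is the step $[\mathcal{F}] \equiv \sum_i l_i(\mathcal{F})[\OO_{C_i}]$ modulo zero-dimensional classes: the parenthetical about rank-one torsion-free sheaves on an integral $C_i$ covers only part of the devissage, and one still needs to explain why $\mathcal{F}$ filters into such pieces (the paper does this via pushforward to the reduced $C$ and comparison with $f_{i*} f_i^* \mathcal{F}$). This is an expositional, not mathematical, gap.
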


\begin{remark}
  Note, in particular, that this implies $\chi(\mathcal{F}, \mathcal{E}) = \chi(\mathcal{E}, \mathcal{F})$, even though Serre duality is usually more complicated.
\end{remark}

\begin{proof}
  First let us consider the special case where both $\mathcal{F}$ and $\mathcal{E}$ are structure sheaves of irreducible components $C_i$ and $C_j$ for some $i, j \in [1, n]$. In this case we need to prove that
  \[ \chi(\OO_{C_i}, \OO_{C_j}) = - C_i \cdot C_j. \]
  Since the Euler pairing is additive with respect to exact sequences, the short exact sequence
  \[ 0 \to \OO_{S}(-C_i) \to \OO_{S} \to \OO_{C_i} \to 0 \]
  implies the equality
  \[ \chi(\OO_{C_i}, \OO_{C_j}) = \chi(\OO_{S}, \OO_{C_j}) - \chi(\OO_{S}, \OO_{C_j} \otimes \OO_{S}(C_i)). \]
  By Riemann--Roch-type argument this number is equal to $- \deg_{C_j}(\OO_{S}(C_i)|_{C_j})$, hence to the intersection number $- C_i \cdot C_j$ by the definition of the intersection product on surfaces (see, e.g., \cite[Th.~V.1.1]{hartshorne-ag}).

  It remains to show that the case of general $\mathcal{F}$ follows from this special case. Since the sheaf $\mathcal{F}$ is set-theoretically supported on $C$, it admits a filtration by subsheaves which are scheme-theoretically supported on $C$. The Euler pairing depends only on the classes of sheaves in $K_0(S)$, so we may assume that the filtration splits, i.e., that $\mathcal{F}$ is a pushforward of a coherent sheaf along the inclusion $j\colon C \monoarrow S$ of the reduced curve $C$.
  
  Let $f_i\colon \widetilde{C_i} \to C_i \monoarrow S$ be the normalization of the irreducible component $C_i \subset C$. Then the morphism $\mathcal{F} \to f_{i *} f_{i}^{*} \mathcal{F}$, where we have for the first time in this paper used the non-derived pullback functor, is an isomorphism at the generic point of $C_i$, i.e., the cokernel has zero-dimensional support while the kernel has length zero along $C_i$. Using the additivity of classes in $K_0(S)$ along exact sequences we may thus assume that $\mathcal{F}$ is a direct sum of zero-dimensional sheaves and pushforwards of coherent sheaves on the smooth curves $\widetilde{C_i}$ along the maps $f_i\colon \widetilde{C_i} \to S$.
  
  For each $i$ a coherent sheaf on the smooth proper curve $\widetilde{C_i}$ is a direct sum of a vector bundle on $\widetilde{C_i}$ and a torsion subsheaf. Thus again we may assume that $\mathcal{F} \iso \bigoplus_{i = 1}^{n} f_{i *}(\mathcal{F}_i) \oplus T$, where $T$ has zero-dimensional support and each $\mathcal{F}_i$ is a vector bundle on $\widetilde{C_i}$. Since we consider the Euler pairing between $\mathcal{F}$ and $\mathcal{E}$, where $\mathcal{E}$ is supported on a curve $C$, by Lemma~\ref{lem:support_chi_vanishing} the direct summand $T \subset \mathcal{F}$ does not contribute to the Euler pairing and will be omitted in the calculations below.

  Since Euler pairing is additive for direct sums, it is enough to consider the basic case where~$\mathcal{F} = f_{i *}(\mathcal{F}_i)$ is isomorphic to the pushforward of a vector bundle on $\widetilde{C_i}$. For any point~$p \in \widetilde{C_i}$ there exists a short exact sequence
  \[ 0 \to \mathcal{F}_i \to \mathcal{F}_{i} \otimes \OO_{\widetilde{C_i}}(p) \to \mathcal{F}_i \otimes \OO_{\{p\}} \to 0, \]
  where the object on the right side is torsion. Thus after possibly replacing $\mathcal{F}_{i}$ with a high enough twist we may assume that the vector bundle $\mathcal{F}_i$ on $\widetilde{C_i}$ is globally generated. Then for any choice of $\rk(\mathcal{F}_{i})$ general global sections of $\mathcal{F}$ the induced morphism $\OO_{\widetilde{C_i}}^{\oplus \rk \mathcal{F}_{i}} \to \mathcal{F}_{i}$ is an isomorphism at the generic point of $\widetilde{C_i}$. Thus we may in fact assume that $\mathcal{F}_i$ is a trivial vector bundle on $\widetilde{C_i}$. Since the pushforward of a trivial bundle along the map $f_{i}\colon \widetilde{C_i} \to S$ differs from the structure sheaf $\OO_{C_i} \in \Coh(S)$ by a zero-dimensional sheaf, again we may assume that $\mathcal{F}$ is isomorphic to $\OO_{C_i}$.

  By performing a similar reduction sequence for the coherent sheaf $\mathcal{E}$ we see that it is enough to check the case where both $\mathcal{F}$ and $\mathcal{E}$ are isomorphic to structure sheaves of some components~$C_i, C_j \subset C$. But this case has already been discussed at the start of the proof.
\end{proof}

\begin{corollary}
  \label{cor:ext_orthogonal_sheaves_are_negative}
  Let $S$ be a smooth projective surface, and let $C \subset S$ be a reduced curve. Let $C = \bigcup_{i \in [1, n]} C_i$ be the decomposition into irreducible components, and let $I \in \mathrm{Mat}_{n}(\ZZ)$ be the intersection matrix of $C$. Let $\mathcal{F}$ and $\mathcal{E}$ be two coherent sheaves supported on $C$. If~$\Ext^1(\mathcal{F}, \mathcal{E}) = 0$, then $I(\underline{l}(\mathcal{F}), \underline{l}(\mathcal{E})) \leq 0$, and the case where $I(\underline{l}(\mathcal{F}), \underline{l}(\mathcal{E})) = 0$ occurs only when $\Ext^{i}(\mathcal{F}, \mathcal{E}) = 0$ for all $i$.
\end{corollary}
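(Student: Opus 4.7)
The plan is to read Corollary~\ref{cor:ext_orthogonal_sheaves_are_negative} as a direct consequence of Lemma~\ref{lem:euler_characteristic_via_intersections} together with the fact that on a smooth projective surface the homological dimension of $\Coh(S)$ equals two, so only $\Hom$, $\Ext^1$, and $\Ext^2$ contribute to the Euler pairing.

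First, I would expand
\[
  \chi(\mathcal{F}, \mathcal{E}) = \dim \Hom(\mathcal{F}, \mathcal{E}) - \dim \Ext^1(\mathcal{F}, \mathcal{E}) + \dim \Ext^2(\mathcal{F}, \mathcal{E}),
\]
using that higher $\Ext$'s vanish because $S$ is a smooth surface. The hypothesis $\Ext^1(\mathcal{F}, \mathcal{E}) = 0$ then gives $\chi(\mathcal{F}, \mathcal{E}) = \dim \Hom(\mathcal{F}, \mathcal{E}) + \dim \Ext^2(\mathcal{F}, \mathcal{E}) \geq 0$, since both summands are non-negative integers.

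Next, I would apply Lemma~\ref{lem:euler_characteristic_via_intersections} to rewrite this as $-I(\underline{l}(\mathcal{F}), \underline{l}(\mathcal{E})) \geq 0$, i.e., $I(\underline{l}(\mathcal{F}), \underline{l}(\mathcal{E})) \leq 0$, which is the first part of the conclusion.

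For the equality case, suppose $I(\underline{l}(\mathcal{F}), \underline{l}(\mathcal{E})) = 0$. Then $\chi(\mathcal{F}, \mathcal{E}) = 0$ by Lemma~\ref{lem:euler_characteristic_via_intersections}, and combined with the vanishing of $\Ext^1(\mathcal{F}, \mathcal{E})$ we obtain $\dim \Hom(\mathcal{F}, \mathcal{E}) + \dim \Ext^2(\mathcal{F}, \mathcal{E}) = 0$. Since both dimensions are non-negative, both must vanish, and together with the assumed vanishing of $\Ext^1$ and the automatic vanishing of $\Ext^i$ for $i \geq 3$, this shows that $\Ext^i(\mathcal{F}, \mathcal{E}) = 0$ for all~$i$. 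There is no real obstacle here; the only subtle point to verify is the homological dimension claim, but this is standard (any coherent sheaf on a smooth $n$-dimensional variety admits locally free resolutions of length $\leq n$), so the entire argument is essentially a one-paragraph application of the preceding lemma.
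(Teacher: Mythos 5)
Your proposal is correct and matches the paper's proof: both observe that on a smooth surface $\chi(\mathcal{F}, \mathcal{E}) = \dim\Hom - \dim\Ext^1 + \dim\Ext^2$, so under the hypothesis $\Ext^1 = 0$ one has $\chi \geq 0$, and then Lemma~\ref{lem:euler_characteristic_via_intersections} converts this into $I(\underline{l}(\mathcal{F}), \underline{l}(\mathcal{E})) \leq 0$ with equality forcing $\Hom = \Ext^2 = 0$. You have simply written out in full the same one-line argument the paper gives.
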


\begin{proof}
  On a smooth surface the only negative contribution to the integer $\chi(\mathcal{F}, \mathcal{E})$ is the dimension of $\Ext^1(\mathcal{F}, \mathcal{E})$. The rest follows from Lemma~\ref{lem:euler_characteristic_via_intersections}.
\end{proof}

\begin{lemma}
  \label{lem:ext1_bound_on_surface}
  Let $S$ be a smooth projective surface. For any two objects $E, F \in \Dbcoh(S)$ there is an inequality
  \[ \dim \Ext^1(E, F) \geq \sum_{i = -\infinity}^{\infinity} \dim \Ext^1(\mathcal{H}^{i}(E), \mathcal{H}^{i}(F)). \]
  In particular, if the objects $E$ and $F$ are semiorthogonal, i.e., $\RHom(E, F) = 0$, then for any pair of indices $i, j \in \ZZ$ the space $\Ext^{1}(\mathcal{H}^{i}(E), \mathcal{H}^{j}(F))$ is zero.
\end{lemma}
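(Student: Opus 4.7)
I plan to deduce the inequality from the hypercohomology (Grothendieck) spectral sequence
\[
  E_2^{p,q} \;=\; \bigoplus_{i \in \ZZ} \Ext^{p}_{\Coh(S)}\bigl(\mathcal{H}^{i}(E),\, \mathcal{H}^{i+q}(F)\bigr) \;\Longrightarrow\; \Ext^{p+q}_{\Dbcoh(S)}(E, F),
\]
with differentials $d_r\colon E_r^{p,q} \to E_r^{p+r,\,q-r+1}$. This is a standard construction: it arises by filtering $F$ by its truncations $\tau_{\leq n} F$, whose successive quotients are the shifted cohomology sheaves $\mathcal{H}^n(F)[-n]$, and applying $\RHom(E,-)$; since both $E$ and $F$ are bounded the direct sum is finite and the spectral sequence converges strongly to the indicated $\Ext$-groups.

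The crucial input is that $S$ is a smooth surface, so $\Coh(S)$ has global homological dimension $2$ and hence $E_2^{p,q} = 0$ whenever $p \notin \{0,1,2\}$; the entire spectral sequence lives in a three-column strip. I will then inspect the entry $E_2^{1,0} = \bigoplus_i \Ext^1(\mathcal{H}^i(E), \mathcal{H}^i(F))$ and observe that it is a permanent cycle: for every $r \geq 2$ the outgoing differential from $E_r^{1,0}$ lands in column $1+r \geq 3$ and therefore vanishes, while the incoming differential into $E_r^{1,0}$ originates in column $1-r \leq -1$ and therefore also vanishes. Hence $E_{\infty}^{1,0} = E_2^{1,0}$, and since $E_{\infty}^{1,0}$ is a subquotient of the abutment $\Ext^{1}(E,F)$, its dimension bounds $\dim \Ext^1(E,F)$ from below, yielding the claimed inequality.

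The ``in particular'' clause then follows formally: if $\RHom(E,F) = 0$ then $\Ext^1(E, F[n]) = 0$ for every $n \in \ZZ$, so applying the inequality to $F[n]$ in place of $F$ (and using $\mathcal{H}^{j}(F[n]) = \mathcal{H}^{j+n}(F)$) forces $\sum_i \dim \Ext^1(\mathcal{H}^i(E), \mathcal{H}^{i+n}(F)) = 0$. Since every summand is a non-negative integer, each $\Ext^1(\mathcal{H}^i(E), \mathcal{H}^{i+n}(F))$ vanishes, and letting $n$ range over $\ZZ$ gives the vanishing of $\Ext^1(\mathcal{H}^i(E), \mathcal{H}^j(F))$ for all pairs of indices. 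The only genuinely technical point is invoking the spectral sequence; once it is available, the column-count argument is entirely formal, and the surface hypothesis does all the work. Should one wish to bypass the filtered-complex machinery, the same bound can be obtained by induction on the cohomological amplitude of $F$, using the truncation triangle $\tau_{<k} F \to F \to \mathcal{H}^k(F)[-k]$, the long exact sequence of $\Ext$'s, and the vanishing $\Ext^{\geq 3} = 0$ to kill the relevant connecting maps.
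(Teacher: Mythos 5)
Your proof is correct and is essentially the paper's own argument: the paper cites the spectral sequence from \cite{bbd} (their (3.1.3.4)) with exactly this $E_2$-page and uses the surface hypothesis to confine it to the columns $p\in\{0,1,2\}$, and your permanent-cycle check of $E_2^{1,0}$ is in fact a more careful version of the paper's terse ``degenerates at the second page'' remark (strictly, only $d_{\geq 3}=0$; one still must note that no $d_2$ hits or leaves the $(1,0)$ spot, which you do). One small slip in your sketch of the construction: filtering $F$ alone by truncations yields a spectral sequence whose $E_1$-page only involves $\Ext^\bullet(E,\mathcal H^j(F))$; to arrive at the stated $E_2$-page with cohomology sheaves of \emph{both} $E$ and $F$ you need to filter both objects (or simply cite the BBD spectral sequence directly, as the paper does), but this does not affect the validity of the column-count argument or the shift trick you use for the ``in particular'' clause.
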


\begin{proof}
  The spectral sequence computing $\Ext^{\bullet}(E, F)$ in terms of $\Ext$'s between cohomology sheaves (see \cite[(3.1.3.4)]{bbd}) degenerates at the second page since $S$ is a smooth surface, and this implies the desired inequality. For details, see, for example, \cite[Cor.~2.26]{pirozhkov-delpezzo}.
\end{proof}

\begin{theorem}
  \label{thm:sod_implies_negativity}
  Let $S$ be a smooth projective surface, and let $C \subset S$ be a reduced curve. Let~$C = \bigcup_{i \in [1, n]} C_i$ be the decomposition into irreducible components.
  Assume that there exists an admissible subcategory $\cA \subset \Dbcoh(S)$  with $\supp(\cA) = C$. Then at least one of the irreducible components $C_i \subset C$ has negative self-intersection.
\end{theorem}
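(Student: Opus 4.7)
The plan is to argue by contradiction: suppose every irreducible component $C_i$ satisfies $C_i \cdot C_i \geq 0$. Since distinct irreducible curves on a surface always intersect non-negatively, the intersection matrix $I$ then has all entries non-negative. Pick any closed point $p \in C = \supp(\cA)$ and form the projection triangle
\[ B_p \to \OO_p \to A_p \to B_p[1], \]
with $A_p \in \cA$ and $B_p \in \emptyperp \cA$; since $\supp(A_p) \subset \supp(\cA) = C$ and $p \in C$, the triangle forces $\supp(B_p) \subset C$ as well. From the semiorthogonality $\RHom(B_p, A_p) = 0$ and Lemma~\ref{lem:ext1_bound_on_surface} one gets $\Ext^1(\mathcal{H}^i(B_p), \mathcal{H}^j(A_p)) = 0$ for all $i, j$. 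Corollary~\ref{cor:ext_orthogonal_sheaves_are_negative} then yields $I(\underline{l}(\mathcal{H}^i(B_p)), \underline{l}(\mathcal{H}^j(A_p))) \leq 0$, but the entrywise non-negativity of $I$ and of all length vectors makes the same integer $\geq 0$. Hence the pairing vanishes, and the equality clause of the corollary upgrades this to $\Ext^k(\mathcal{H}^i(B_p), \mathcal{H}^j(A_p)) = 0$ for all $i, j, k$, in particular $\Hom(\mathcal{H}^i(B_p), \mathcal{H}^j(A_p)) = 0$ for all $i, j$.

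The rest is a direct analysis of the long exact cohomology sequence of the projection triangle. Since $\OO_p$ is concentrated in degree zero, the sequence yields an isomorphism $\mathcal{H}^j(A_p) \iso \mathcal{H}^{j+1}(B_p)$ for every $j \notin \{-1, 0\}$. Such an isomorphism is a non-zero element of $\Hom(\mathcal{H}^{j+1}(B_p), \mathcal{H}^j(A_p))$, which we have just shown to be zero, so both sheaves are zero. Thus $A_p$ is concentrated in cohomological degrees $-1, 0$ and $B_p$ in degrees $0, 1$, and the long exact sequence collapses to
\[ 0 \to \mathcal{H}^{-1}(A_p) \to \mathcal{H}^0(B_p) \to \OO_p \to \mathcal{H}^0(A_p) \to \mathcal{H}^1(B_p) \to 0. \]
Since $\OO_p$ is a simple sheaf, the image of $\mathcal{H}^0(B_p) \to \OO_p$ is either $0$ or $\OO_p$. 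In the first case exactness gives $\mathcal{H}^{-1}(A_p) \iso \mathcal{H}^0(B_p)$; transporting the vanishing $\Hom(\mathcal{H}^0(B_p), \mathcal{H}^{-1}(A_p)) = 0$ through this isomorphism kills $\End(\mathcal{H}^0(B_p))$, forcing $\mathcal{H}^0(B_p) = \mathcal{H}^{-1}(A_p) = 0$. In the second case the analogous argument with $\mathcal{H}^0(A_p) \iso \mathcal{H}^1(B_p)$ and $\Hom(\mathcal{H}^1(B_p), \mathcal{H}^0(A_p)) = 0$ kills those two sheaves. Either way $A_p$ has at most one non-zero cohomology sheaf.

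This contradicts Lemma~\ref{lem:skyscraper_projections_are_zero_in_k0}, by which the projection of $\OO_p$ to an admissible subcategory with $\supp(\cA) \neq S$ must have at least two non-zero cohomology sheaves; hence some $C_i$ must have $C_i \cdot C_i < 0$. The crux of the argument is the upgrade from $\Ext^1$-orthogonality to full $\Ext^*$-orthogonality via the equality clause of Corollary~\ref{cor:ext_orthogonal_sheaves_are_negative}: it is precisely the $\Hom$-vanishing obtained there, not a priori available from semiorthogonality alone on a surface, that lets the long exact sequence collapse the cohomological support of $A_p$ to a single degree, and it is only accessible under the non-negativity assumption on the intersection matrix.
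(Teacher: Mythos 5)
Your proof is correct, and it uses the same three ingredients as the paper --- Lemma~\ref{lem:skyscraper_projections_are_zero_in_k0}, the $\Ext^1$-orthogonality of cohomology sheaves from Lemma~\ref{lem:ext1_bound_on_surface}, and the Euler pairing formula of Lemma~\ref{lem:euler_characteristic_via_intersections} (via Corollary~\ref{cor:ext_orthogonal_sheaves_are_negative}) --- but assembles them in a genuinely different order. The paper's argument is direct: it first invokes the $K_0$ lemma and the simplicity of $\OO_p$ to locate a nonzero cohomology sheaf $\mathcal{H}^i(A_p)$ whose connecting map $\phi_i$ to $\mathcal{H}^{i+1}(B_p)$ is an isomorphism, whence that sheaf has vanishing self-$\Ext^1$ and nonzero $\Hom$, so $\chi(\mathcal{H}^i(A_p),\mathcal{H}^i(A_p))>0$; this directly produces a vector $v=\underline{l}(\mathcal{H}^i(A_p))$ with $I(v,v)<0$ and hence a negative diagonal entry, with the $K_0$ lemma doing its work at the beginning. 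Your version is a contradiction argument: from the entrywise non-negativity of $I$ (under the hypothesis to be refuted) and of all length vectors you pin the pairing at exactly zero, invoke the equality clause of Corollary~\ref{cor:ext_orthogonal_sheaves_are_negative} to upgrade $\Ext^1$-vanishing to full $\Ext^*$-vanishing between all pairs $\mathcal{H}^i(B_p)$, $\mathcal{H}^j(A_p)$, and then use the resulting $\Hom$-vanishing to collapse the long exact sequence, forcing $A_p$ to have at most one nonzero cohomology sheaf and contradicting the $K_0$ lemma at the end. Your route is slightly longer but exhibits, as a byproduct, the stronger structural fact that under $I\geq 0$ the cohomology sheaves of $B_p$ and $A_p$ would be totally orthogonal; the paper's is shorter and produces the negative intersection number directly without needing the equality case of the corollary. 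Both are sound, and each highlights a different use of the same Euler-characteristic constraint.
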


\begin{remark}
  I suspect that in fact all irreducible components of $C$ have negative self-intersections.
\end{remark}

\begin{proof}
  Let $I \in \mathrm{Mat}_{n}(\ZZ)$ be the intersection matrix of $C$. Note that $I$ is a symmetric matrix which is non-negative away from the diagonal. Thus to show that at least one of the curves has negative self-intersection it is enough to find a vector $v \in \ZZ_{\geq 0}^{n}$ with non-negative integral coefficients such that $I(v, v) < 0$.

  Pick some point $p \in C$. Consider the projection triangle of the skyscraper sheaf at that point:
    \begin{equation}
      \label{eq:main_skyscraper_projection}
      B_p \to \OO_p \to A_p \to B_p [1],
    \end{equation}
  where $A_p \in \cA$ and $B_p \in \emptyperp \cA$. Consider the long exact sequence of cohomology sheaves associated to the triangle~\eqref{eq:main_skyscraper_projection}. It splits into a sequence of isomorphisms
  \[ \mathcal{H}^{i}(A_p) \isoarrow \mathcal{H}^{i+1}(B_p) \]
  for $i \neq -1, 0$, and an exact sequence
  \[ 0 \to \mathcal{H}^{-1}(A_p) \to \mathcal{H}^{0}(B_p) \to \OO_p \to \mathcal{H}^{0}(A_p) \to \mathcal{H}^1(B_p) \to 0. \]
  
  Since $\OO_p$ is a simple coherent sheaf, at most one of the two arrows in $\mathcal{H}^{0}(B_p) \to \OO_p \to \mathcal{H}^{0}(A_p)$ is nonzero.\footnote{In fact, it's not hard to show that the morphism $\mathcal{H}^{0}(B_p) \to \OO_p$ is always a surjection when $\supp(\cA) = C$, so the map $\OO_p \to \mathcal{H}^{0}(A_p)$ is zero.} Hence the connecting morphisms
  \[ \phi_{i}\colon \mathcal{H}^{i}(A_p) \to \mathcal{H}^{i+1}(B_p) \]
  are isomorphisms for all but one value of $i \in \ZZ$. By Lemma~\ref{lem:skyscraper_projections_are_zero_in_k0} the object $A_p$ has at least two nonzero cohomology sheaves. Thus there exists some $i \in \ZZ$ such that the cohomology sheaf $\mathcal{H}^{i}(A_p)$ is not zero and the map $\phi_{i}$ is an isomorphism. Then we have
  \[ \dim \Ext^1(\mathcal{H}^{i}(A_p), \mathcal{H}^{i}(A_p)) = \dim \Ext^1(\mathcal{H}^{i+1}(B_p), \mathcal{H}^{i}(A_p)) = 0 \]
  by Lemma~\ref{lem:ext1_bound_on_surface} since the objects $B_p$ and $A_p$ are semiorthogonal, while
  \[ \dim \Hom(\mathcal{H}^{i}(A_p), \mathcal{H}^{i}(A_p)) > 0 \] 
  since this space contains the identity morphism which is nonzero provided that $\mathcal{H}^{i}(A_p) \neq 0$. Thus $\chi(\mathcal{H}^{i}(A_p), \mathcal{H}^{i}(A_p)) > 0$ by definition of the Euler pairing, and by Lemma~\ref{lem:euler_characteristic_via_intersections} this implies that the length vector $\underline{l}(\mathcal{H}^{i}(A_p))$ has a negative square with respect to the intersection matrix~$I$. Thus at least one matrix element of $I$ is strictly negative, which is what we wanted to prove.
\end{proof}

The well-known Orlov's semiorthogonal decomposition for blow-ups produces many examples of admissible subcategories supported on contractible curves. Together with Theorem~\ref{thm:sod_implies_negativity} this may lead one to suspect that a semiorthogonal decomposition supported on a curve in a surface can only exist when the intersection matrix of the curve is negative definite. This is not true, and counterexamples can be constructed using spherical twists along $(-2)$-curves. Here's one counterexample, explicitly.

\begin{example}
  \label{ex:noncontractible_curve}
  Let $K \subset \PP^2$ be a nodal cubic curve. Blow-up the node to get an embedded resolution; note that the proper transform of $K$ is a smooth $\PP^1$ with self-intersection number~$5$. Let $C_1$ be the exceptional divisor. Blow-up one of the two points in the intersection of $C_1$ with the proper transform of $K$, call the exceptional divisor $C_2$. Now we have three smooth rational curves forming a cycle, with self-intersections $4$, $-2$, and $-1$. Blowing up $6$ general points on the proper transform of $K$ we obtain a surface $S$ with two smooth $(-2)$-curves, $\widetilde{K}$ and $C_1$, and one smooth $(-1)$-curve $C_2$, with the intersection matrix of $C := \widetilde{K} \cup C_1 \cup C_2$ given by
  \[ I = \left(\begin{matrix} -2 & 1 & 1 \\ 1 & -2 & 1 \\ 1 & 1 & -1 \end{matrix}\right). \]

  Here we see that $I((1, 1, 1), (1, 1, 1)) = 1 > 0$, so $C$ is not a contractible curve. The structure sheaf of the union $C_1 \cup C_2$ is exceptional since this union can be contracted to a smooth point, the structure sheaf of $\widetilde{K}$ is spherical since $\widetilde{K}$ is a smooth rational $(-2)$-curve, hence the spherical twist of $\OO_{C_1 \cup C_2}$ is an exceptional object as well. This twist is nothing but the middle object in the universal extension
  \[ 0 \to \OO_{C_1 \cup C_2} \to \mathcal{F} \to \OO_{\widetilde{K}}^{\oplus 2} \to 0, \]
  which is a coherent sheaf whose set-theoretical support is $C$. Thus an exceptional object $\mathcal{F}$ generates an admissible subcategory in $\Dbcoh(S)$ whose (set-theoretical) support $C$ is not a contractible curve.

  Note, however, that the length vector of $\mathcal{F}$ is $(2, 1, 1)$ with $I(\underline{l}(\mathcal{F}), \underline{l}(\mathcal{F})) = -1$ being a negative number, as required by Corollary~\ref{cor:ext_orthogonal_sheaves_are_negative} for any exceptional coherent sheaf.
\end{example}

\printbibliography

\end{document}